\documentclass{article}
\pdfoutput=1 
\usepackage[english]{babel}

\usepackage[letterpaper,top=2cm,bottom=2cm,left=3cm,right=3cm,marginparwidth=1.75cm]{geometry}

\usepackage{amsmath}
\usepackage{amsfonts}
\usepackage{amsthm}
\usepackage{authblk}
\usepackage{graphicx}
\usepackage{bm}
\usepackage{mathrsfs,mathtools}
\usepackage{xspace}
\usepackage[colorlinks=true, allcolors=blue]{hyperref}
\usepackage{cleveref}
\usepackage{subfigure}
\usepackage{comment}
\usepackage{algorithm}
\usepackage{algpseudocode}
\usepackage[table,xcdraw]{xcolor}

\usepackage{color}
\usepackage{circuitikz}
\usepackage{tikz}
\usetikzlibrary{calc,positioning,shapes}
\usetikzlibrary{patterns,decorations.pathmorphing,decorations.markings}
\usetikzlibrary{external}
\usepackage{pgfplots}
\pgfplotsset{compat=newest}
\usepackage[margin=10pt,font=small,labelfont=bf,labelsep=endash]{caption}
\usepackage{subcaption}

\usetikzlibrary{external}
\usetikzlibrary{calc,positioning,shapes}
\usetikzlibrary{patterns,decorations.pathmorphing,decorations.markings}
\usetikzlibrary{intersections, backgrounds}
\usetikzlibrary{circuits}
\usetikzlibrary{circuits.ee.IEC}
\usepackage{pgfplots}
\pgfplotsset{compat=newest}

\definecolor{color0}{rgb}{0.12156862745098,0.466666666666667,0.705882352941177}
\definecolor{color1}{rgb}{1,0.498039215686275,0.0549019607843137}
\definecolor{color2}{rgb}{0.172549019607843,0.627450980392157,0.172549019607843}
\definecolor{color3}{rgb}{0.83921568627451,0.152941176470588,0.156862745098039}
\definecolor{color4}{rgb}{0.580392156862745,0.403921568627451,0.741176470588235}
\definecolor{color5}{rgb}{0,0,0}

\definecolor{mycolor1}{rgb}{0.00000,0.44700,0.74100}
\definecolor{mycolor2}{rgb}{0.85000,0.32500,0.09800}
\definecolor{mycolor3}{rgb}{0.92900,0.69400,0.12500}
\definecolor{mycolor4}{rgb}{0.46600,0.67400,0.18800}
\definecolor{mycolor5}{rgb}{0.49400,0.18400,0.55600}

\newcommand{\lineWidth}{1.2pt}
\newcommand{\imageWidth}{2.0in}
\newcommand{\imageHeight}{1.8in}

\usepackage{booktabs}
\usepackage{paralist}
\usepackage{soul}
\usepackage{listings}

\newcommand{\cS}{\mathcal{S}}

\def\eps{\varepsilon}

\renewcommand{\Re}{{\mbox{\rm Re}}}
\renewcommand{\Im}{{\mbox{\rm Im}}}

\newcommand{\matlab}{{\sc Matlab}}
\newcommand{\imagunit}{{\bf i}}
\newcommand{\Id}{{\fI}}

\newcommand{\conj}[1]{\overline{#1}}

\newcommand{\ophi}{\phi}

\newcommand{\abbr}[1]{\textsf{#1}\xspace}

\newcommand{\QWF}{\abbr{QWF}}

\newcommand{\MOR}{\abbr{MOR}}

\newcommand{\DAE}{\abbr{DAE}}
\newcommand{\DAEs}{\abbr{DAEs}}
\newcommand{\ODE}{\abbr{ODE}}
\newcommand{\ODEs}{\abbr{ODEs}}
\newcommand{\CIM}{\abbr{CIM}}
\newcommand{\CIMs}{\abbr{CIMs}}
\newcommand{\PDE}{\abbr{PDE}}
\newcommand{\PDEs}{\abbr{PDEs}}
\newcommand{\IVP}{\abbr{IVP}}
\newcommand{\QR}{\abbr{QR}}

\theoremstyle{plain}
\newtheorem{theorem}{Theorem}[section]
\newtheorem{proposition}[theorem]{Proposition}
\newtheorem{lemma}[theorem]{Lemma}
\newtheorem{corollary}[theorem]{Corollary}
\newtheorem{remark}[theorem]{Remark}

\newtheorem{problem}[theorem]{Problem}

\newcommand{\zeroVec}{\mathbf{0}}
\newcommand{\zeroMat}{\mathbf{0}}
\newcommand{\gleMat}[1]{\mathbf{\mathscr{#1}}}
\newcommand{\lapOp}{\gleMat{L}}
\newcommand{\lapVar}{z}
\newcommand{\intvar}{s}
\newcommand{\dintvar}{\integrate{\intvar}}

\newcommand{\tol}{\abbr{tol}}
\newcommand{\indDAE}{\nu}
\newcommand{\difW}{\hat{\mathbf{V}}}
\newcommand{\impW}{\hat{\mathbf{W}}}
\newcommand{\diff}{\mathrm{diff}}
\newcommand{\imp}{\mathrm{imp}}

\newcommand{\N}{\ensuremath\mathbb{N}}

\newcommand{\R}{\ensuremath\mathbb{R}}
\newcommand{\C}{\ensuremath\mathbb{C}}

\newcommand{\smoothFunctions}[3][]{\ifthenelse{\equal{#1}{}}{\mathcal{C}^{#2}}{\mathcal{C}_{#1}^{#2}}(#3)}

\newcommand{\dist}[1][]{\ifthenelse{\equal{#1}{}}{\mathbb{D}}{#1_{\mathbb{D}}}}

\newcommand{\Frob}{\mathrm{F}}

\newcommand{\T}{\ensuremath\mathsf{T}}

\newcommand{\integrate}[1]{\mathrm{d}#1}

\newcommand{\dz}{\integrate{z}}

\newcommand{\domega}{\integrate{\omega}}

\newcommand{\dtau}{\integrate{\tau}}

\DeclareMathOperator{\ee}{e}

\DeclareMathOperator{\image}{img}
\newcommand{\img}{\image}

\newcommand{\calC}{\mathcal{C}}

\newcommand{\calN}{\mathcal{N}}
\newcommand{\calO}{\mathcal{O}}
\newcommand{\calP}{\mathcal{P}}

\newcommand{\calV}{\mathcal{V}}
\newcommand{\calW}{\mathcal{W}}

\newcommand{\stx}{\ensuremath{\bm{x}}}

\newcommand{\stateDim}{n}

\newcommand{\stateDimRed}{r}
\newcommand{\inp}{\ensuremath{\bm{u}}}
\newcommand{\inpDim}{m}

\newcommand{\outDim}{p}
\newcommand{\source}{\ensuremath{\bm{f}}}
\newcommand{\inSol}{\stx_0}

\newcommand{\fe}{\ensuremath{\bm{e}}}

\newcommand{\fp}{\ensuremath{\bm{p}}}

\newcommand{\fu}{\ensuremath{\bm{u}}}
\newcommand{\fv}{\ensuremath{\bm{v}}}

\newcommand{\fx}{\ensuremath{\bm{x}}}
\newcommand{\fy}{\ensuremath{\bm{y}}}

\newcommand{\fA}{\ensuremath{\bm{A}}}
\newcommand{\fB}{\ensuremath{\bm{B}}}
\newcommand{\fC}{\ensuremath{\bm{C}}}
\newcommand{\fD}{\ensuremath{\bm{D}}}
\newcommand{\fE}{\ensuremath{\bm{E}}}

\newcommand{\fG}{\ensuremath{\bm{G}}}

\newcommand{\fI}{\ensuremath{\bm{I}}}
\newcommand{\fJ}{\ensuremath{\bm{J}}}
\newcommand{\fK}{\ensuremath{\bm{K}}}
\newcommand{\fL}{\ensuremath{\bm{L}}}
\newcommand{\fM}{\ensuremath{\bm{M}}}
\newcommand{\fN}{\ensuremath{\bm{N}}}

\newcommand{\fP}{\ensuremath{\bm{P}}}

\newcommand{\fS}{\ensuremath{\bm{S}}}
\newcommand{\fT}{\ensuremath{\bm{T}}}

\newcommand{\fV}{\ensuremath{\bm{V}}}
\newcommand{\fW}{\ensuremath{\bm{W}}}
\newcommand{\fX}{\ensuremath{\bm{X}}}
\newcommand{\fY}{\ensuremath{\bm{Y}}}

\newcommand{\fdelta}{\ensuremath{\bm{\delta}}}

\newcommand{\flambda}{\ensuremath{\bm{\lambda}}}

\newcommand{\frho}{\ensuremath{\bm{\rho}}}

\newcommand{\fDelta}{\ensuremath{\bm{\Delta}}}

\newcommand{\fPi}{\ensuremath{\bm{\Pi}}}

\newcommand{\prmtr}{\boldsymbol{\mu}}
\newcommand{\prmtrSet}{\calP}
\newcommand{\prmtrDim}{d}

\newcommand{\Col}{\mathrm{Col}}
\newcommand{\lutc}{\underset{\sim}{<}}

\title{Contour integral methods and structured perturbations for linear differential-algebraic equations}

\author[1]{Nicola Guglielmi}
\author[2]{Mattia Manucci}

\affil[1]{\small {Division of mathematics}, {Gran Sasso Science Institute}, {{Viale Francesco Crispi~7}, {67100}, {L'Aquila}, {Italy}}}
\affil[2]{{Institute for Applied and Numerical Mathematics (IANM)}, {Karlsruhe Institute of Technology}, {Englerstrsse~2}, {76131}, {Karlsruhe}, {Germany}}

\begin{document}
\maketitle

\begin{abstract}
We generalize the contour integral methods (CIM) framework to the time integration of linear dynamical systems that are subject to algebraic constraints at all times during their evolution. The proposed approach relies on applying the Laplace transform to the Cauchy problem associated with a linear system of differential–algebraic equations (DAE), and subsequently reconstructing the time-domain solution by approximating the inverse Laplace transform via a suitable quadrature rule. This procedure yields an efficient and accurate alternative to classical Runge–Kutta schemes, which are well known to exhibit order reduction in accuracy when applied to DAE.

In the second part of the paper, we address linear parametric DAE and propose an efficient strategy for tuning the integration contour in the CIM framework using suitable structured-unstructured pseudospectral computations. This allows the identification of a single integration profile capable of approximating an entire family of parametric solutions, thereby facilitating the efficient application of model order reduction techniques.

Finally, numerical experiments are presented to validate the proposed methodology and support the theoretical findings.
\end{abstract}

\section{Introduction}

In this work, we consider the approximation of the solution 
$\stx(t)\in\R^{\stateDim}$ of a linear dynamical system subject to algebraic constraints. 
The system has the form
\begin{equation}\label{eqn:DAE:0}
   \fE \dot{\stx}(t)  = \fA\stx(t) +\source(t), 
\end{equation}
where $\fE,\fA\in\R^{\stateDim\times\stateDim}$ and 
$\source:\R\rightarrow \R^{\stateDim}$. Because there are algebraic constraints that must hold for every $t \in \R$, it follows directly that the matrix $\fE$ is singular.
Systems of the form \eqref{eqn:DAE:0} are commonly referred to as 
\emph{differential--algebraic equations} (\DAE).

As a representative example, we consider the semi-discrete Stokes equations, for which the incompressibility (mass-conservation) condition imposes algebraic constraints on the evolution of the system. More broadly, \DAE systems arise in a wide range of applications, including robotic manipulators, traffic-flow modeling, automatic gear-shifting mechanisms, and electrical power systems; see, for example, \cite{KunM24} and the references therein.

Suppose that the objective is to approximate $\stx(t)$ solely at a prescribed time instant $t = T$ or within a specified time interval $t \in [T, \Lambda T]$ with $\Lambda > 1$. In this setting, conventional time-stepping integrators and space–time variational formulations can carry out a large number of evaluations at intermediate time levels that are not of direct interest. In addition, time-integration schemes must address structural challenges that are inherent to \DAE systems, such as order reduction and stability limitations arising from the treatment of constrained variables; see \cite[Sec.~4.2]{BreCP95} and \cite{HaiLR89,HaiW96}.
Motivated by these considerations, we investigate the use of 
\emph{contour integral methods} (\CIM) for approximating $\stx(t)$. 
These methods compute the solution by numerically evaluating its inverse Laplace transform, 
i.e., by applying suitable quadrature rules to the contour integral representation given by the inverse Laplace formula. 
In particular, we rely on the approach proposed in \cite{GugLM21}, 
where the integration contour is determined using a pseudospectral roaming technique based on selected weighted pseudospectral level sets of the leading operator.

A main benefit of \CIM is that they enable one to approximate the solution directly at selected time instants, or over appropriate time windows, without the need to perform intermediate time integration.
They have been successfully applied to diffusion equations 
\cite{GavM01,LopP04,LopPS06,SheST03}, 
fractional-in-time problems \cite{Col22,ColA22}, 
and convection-diffusion equations \cite{GugLN18,GugLM21}. 
These problems are associated with sectorial operators, 
for which \CIM are particularly effective due to contour deformation techniques. 

A fundamental assumption to ensure the efficient evaluation of the approximate solution via \CIM is that the Laplace transform ($\lapOp$) of the solution of \eqref{eqn:DAE:0} exhibits decaying asymptotic behavior. Denoting by $\hat{\stx}\vcentcolon=\lapOp(\stx)$ and $\hat{\source}\vcentcolon=\lapOp(\source)$ the Laplace transforms of $\stx$ and $\source$, respectively, and using \eqref{eqn:DAE:0}, the quantity $\hat{\stx}$ is given by
\begin{equation}\label{eqn:lap}
    \hat{\stx}(\lapVar)
    =
    \left(\lapVar\fE-\fA\right)^{-1}
    \left(\fE\stx_0+\hat{\source}(\lapVar)\right).
\end{equation}
To analyze the behavior of $\hat{\stx}$ as $|\lapVar|\rightarrow\infty$, it is essential to investigate the corresponding generalized resolvent 
$\|(\lapVar\fE-\fA)^{-1}\|$. 
Here, $\|\cdot\|$ denotes the induced $2$-norm. 
If $\fE$ is invertible, the resolvent exhibits a decay of order $\calO(|\lapVar|^{-1})$. 
In contrast, for a general \DAE, the resolvent may fail to decay and can even display unbounded growth.
Our first contribution is to establish that, under standard regularity assumptions on the matrix pencil $(\fA,\fE)$ and on the source term $\source$, 
\CIM provide an efficient and accurate approximation framework for systems of the form \eqref{eqn:DAE:0}, despite the potentially unfavorable asymptotic behavior of the generalized resolvent for \DAE.

\medskip

In the second part of the paper, we consider parametric linear systems, including \DAE. Specifically, 
\begin{equation}\label{eqn:DAE:par}
   \fE \dot{\stx}(t;\prmtr)
   =
   \fA(\prmtr)\stx(t;\prmtr) +\source(t;\prmtr), 
\end{equation}
where $\prmtr\in\prmtrSet\subset\R^{\prmtrDim}$, 
$\fA:\prmtrSet\rightarrow\R^{\stateDim\times\stateDim}$, 
and 
$\source:\R\times\prmtrSet\rightarrow \R^{\stateDim}$. Such systems arise, for example, in semi-discrete Stokes problems where the diffusion coefficient acts as a physical parameter. 
More generally, parametric time-dependent linear systems appear in structural mechanics, diffusion processes, electromagnetism, fluid dynamics, control, and many other applications. Note that one could also naturally allow $\fE:\prmtrSet\rightarrow\R^{\stateDim\times\stateDim}$. However, since in many problems the parameters appear only in $\fA$, and to keep the presentation of our results more streamlined, we mainly focus on the case where $\fE$ is non-parametric living the parametric case to specific remarks towards the manuscript.

In multi-query settings, such as optimal control, shape optimization, or uncertainty quantification, these systems must be evaluated for many parameter values. 
When the dimension $\stateDim$ is large, repeated evaluations become computationally prohibitive. 
Model order reduction (\MOR) techniques aim to address this challenge by constructing reduced systems whose evaluation cost is independent of $\stateDim$; see \cite{BenGW15,HesRS16,GlaMU17}. In scenarios where the solution is required only at a specific time instant $t=T$ or within a time window $[T,\Lambda T]$, time-stepping integrators may incur unnecessary computational overhead. 
In \cite{GugM23}, \CIM were proposed as time integrators within projection-based reduced order models. 
A substantial speed-up can be achieved provided that a single (or a small number of) contour integration profile(s) can be used uniformly on the parameter set of interest. This motivates the following question.

\begin{problem}\label{pro1}
Given a parameter $\prmtr_0\in\R^{\prmtrDim}$ and its associated contour $\Gamma_{\prmtr_0}$, 
for which other parameters $\prmtr\in\R^{\prmtrDim}$ can the same contour $\Gamma_{\prmtr_0}$ be retained without compromising the approximation quality of the employed \CIM?
\end{problem}
The quality of the \CIM-based approximation of linear time-dependent systems depends, among other factors, in a critical manner on the behavior of the generalized resolvent, which can display highly irregular structures over the complex plane and throughout the parameter domain; see \cite{GugLN18}. To tackle Problem~\ref{pro1}, we employ a structured–unstructured perturbation framework for eigenvalue problems; see \cite{book}. This framework enables a precise characterization of the subset of the parameter space for which the generalized resolvent remains uniformly bounded above by a prescribed threshold $\varepsilon^{-1}$.

\subsection{Organization of the manuscript}
The article is structured as follows. In \Cref{sec:constr} we introduce the \CIM methodology and demonstrate its suitability for the \DAE setting. In \Cref{sec:str-uns} we develop a structured–unstructured eigenvalue perturbation framework to characterize the parameter set over which the generalized resolvent remains uniformly bounded, as this property is pivotal both for the construction of the conformal map in \CIM and for the convergence behavior of the employed quadrature rule. In \Cref{sec:num}, we evaluate the methodologies proposed in \Cref{sec:constr} and \Cref{sec:str-uns} by applying them to a range of numerical experiments. Finally, in \Cref{sec:conc} we summarize the main findings and present our concluding remarks.

\subsection{Notation}
Given a generic matrix $\fM$ we denote by $\Col(\fM)$ the column space of $\fM$. For $a,b\in\R$ the relation $a\lutc b$ indicates that $a$ is smaller than $b$ up to a constant that is independent of both $a$ and $b$. With $\fI_{\stateDim}$, we denote the square identity matrix of the appropriate size $\stateDim$. The preimage of $\fA$ with respect to a linear subspace $\calN \subseteq \R^{\stateDim}$ is denoted with
\begin{equation}\label{eqn:pre:img}
	\fM^{-1}(\calN)\coloneqq\{\fx\in\R^{\stateDim} \mid \fM\fx\in\calN\}.
\end{equation}
We denote by 
\begin{equation*}
\langle \fX,\fY \rangle =\sum_{i,j} \conj{\fx}_{ij}\fy_{ij} = {\rm tr}(\fX^* \fY),
\end{equation*} 
the inner product in $\C^{n, n}$ that induces the Frobenius norm $\| \fX \|_\Frob = \langle \fX,\fX \rangle^{1/2}$. 

\section{Time integration of \DAE via \CIM} \label{sec:constr}
The system \eqref{eqn:DAE:0} provided with an initial condition gives the \emph{initial value problem} (\IVP) of the form
\begin{equation}
	\label{eqn:DAE}
	\left\{\quad \begin{aligned}
		\fE \dot{\stx}(t)  &= \fA\stx(t) +\source(t),\\  \stx(t_0) &= \inSol,
	\end{aligned}\right.
\end{equation}
where the symbols $\stx(t)\in\R^{\stateDim}$, $\source(t)\in\R^{\stateDim}$, and $\inSol\in\R^{\stateDim}$ denote the \emph{solution} or \emph{state} of the system at time $t$, the \emph{source} or \emph{forcing} term evaluated at time $t$, and the \emph{initial solution}, respectively. We recall that $\fE$ is assumed to be singular. We assume that the finite eigenvalues $\lambda$ of the matrix pair $(\fE,\fA)$ have a negative real part, i.e., $\lambda\in\C_{-}$ for all $\lambda$ such that $\fA\fv=\lambda\fE\fv$ with $\fv\in\C^{\stateDim}$. In the \DAE contest one normally refers to the ``infinite eigenvalues" as the eigenvalues associated with the cases where $\fE\fv=\zeroVec$ but $\fA\fv\neq\zeroVec$ and $\fv\neq\zeroVec$.

In \Cref{subsec:CIM}, we provide a concise introduction to \CIM and emphasize the key elements that warrant examination for their applicability in the context of \DAE. In \Cref{subsec:DAE} we recall some standard tools for the study of \DAE. Finally, in \Cref{subsec:CIM-DAE} we analyze \CIM for \DAE and provide conditions for their efficient applicability.

\subsection{The \CIMs in a nutshell}\label{subsec:CIM}
We assume the existence of the Laplace transform of $\source$ and that it admits a bounded analytic extension to a suitable region of the complex plane outside the
finite eigenvalues of the matrix pair $(\fE,\fA)$. We then apply the Laplace transform operator $\lapOp$ to \eqref{eqn:DAE}, which gives, for the Laplace transform of $\stx$, the expression in \eqref{eqn:lap}. To recover the solution in the time domain, we apply the inverse Laplace transform, which gives
\begin{equation}\label{eqn:inv:Lap}
    \stx(t)\;=\;\frac{1}{2\pi\imagunit}\int_{\gamma-\imagunit\infty}^{\gamma-\imagunit\infty} \ee^{\lapVar t}\hat{\stx}(\lapVar)\;\domega,
\end{equation}
for certain $\gamma>0$. Then, assuming
\begin{enumerate}
    \item the singularity of the integrand function in \eqref{eqn:inv:Lap} lies in a sectorial region of the complex place with $\Re(\lapVar)\le \gamma$;\label{ass:1}
    \item the integrand function in \eqref{eqn:inv:Lap} decays as $|\lapVar|\rightarrow\infty$;\label{ass:2}
\end{enumerate}
following the idea first introduced in \cite{But57, Tal79}, we can deform the vertical line of integration in \eqref{eqn:inv:Lap} to the contour $\Gamma$, with $\Gamma$ an open piecewise smooth curve running from $-\imagunit\infty$ to $+\imagunit\infty$ surrounding all singularities of $\hat{\stx}$ in \eqref{eqn:lap}; thus we get
\begin{equation}\label{eqn:Bro:int}
        \stx(t)\;=\;\frac{1}{2\pi\imagunit}\int_{\Gamma} \ee^{\lapVar t}\hat{\stx}(\lapVar)\;\dz.
\end{equation}
The integral \eqref{eqn:Bro:int} is called the Bromwich integral and, to approximate it, we parameterize the
integration contour $\Gamma$ with a conformal map $\lapVar: \R\rightarrow \Gamma$ such that
\begin{equation}\label{eqn:int2}
    \int_{\Gamma}\ee^{\lapVar t}\hat{\stx}(\lapVar)\;\dz\;=\;\int_{\R}\fG(\intvar)\;\dintvar,\quad \fG(\intvar)\;\vcentcolon=\;\ee^{\lapVar(\intvar) t} \hat{\stx}(\lapVar(\intvar))\frac{\partial\lapVar}{\partial \intvar}(\intvar).
\end{equation}
As already mentioned, we are interested in approximating $\stx$ at a specific time $T$, thus we fix $t=T$ and assume that we have a target precision denoted as $\tol$ for the required approximation. This allows us to truncate the integral in \eqref{eqn:int2} and thus only to consider a portion of the Bromwich integral that we parameterize by $[-c\pi, c\pi]$. This is
\begin{equation*}
    \int_{\R} \fG(\intvar)\;\dintvar\;\approx\; \int_{-c\pi}^{c\pi} \fG(\intvar)\;\dintvar,
\end{equation*}
for a certain truncation parameter $c \in (0, c_{\max})$, which we determine by solving the non-linear equation $\|\fG(c\pi)\| = \tol$. Finally, the application of a quadrature formula to approximate \eqref{eqn:Bro:int} provides a numerical approximation of $\stx$, for a given time $T$, or even time windows of
the form $[T, \Lambda T]$, $\Lambda>1$, without the need to compute it at intermediate time instants. For example, an application of the trapezoidal rule provides the desired approximation $\stx_N(T)$ of $\stx(T)$, where $\stx_N(T)$ reads as
\begin{equation}\label{eqn:quad:app}
    \stx_N(T)\;\vcentcolon=\;\frac{c}{\imagunit N}\sum_{j=1}^{N-1}\fG(\intvar_j), \text{ with }\intvar_j\;=\;-c\pi+j\frac{2c\pi}{N},\quad j=1,\ldots,N-1.
\end{equation}
Note that the evaluation of each term in the summation \eqref{eqn:quad:app} involves solving the linear system corresponding to the matrix $\fA - \lapVar(\intvar_j)\fE$, as the quantity $\hat \stx(\lapVar(\intvar_j))$ is obtained by the relation \eqref{eqn:lap}. An advantage of the method we propose is that these computations can be easily parallelized since the $N-1$ systems are independent of each other. Furthermore, since the integrand is conjugate symmetric, the number of addends, and thus the number of linear
systems to be solved, can be halved. We also note that, despite the use of a simple trapezoidal quadrature rule, it can be shown (see \cite[Thm.~2]{GugLN18}) that the error in the spectral norm between $\stx_N(T)$ and $\stx(T)$ decays exponentially with respect to the number of quadrature points employed due to the analyticity of the integrand function (see \cite{TreW14}), leading to solving (possibly in parallel) only a few linear systems.

To apply \CIM to \DAE we have to state conditions under which assumptions \cref{ass:1}~and~\ref{ass:2} hold. For linear systems of ordinary differential equations (\ODE), the singularities of the integrand function can be characterized in terms of the eigenvalues of $\fA$ and the poles of $\hat{\source}$, while \cref{ass:2} is always verified under a specific limited growth condition of the Laplace transform of the source term. In the \DAE context, the validation of such hypotheses requires a thorough and rigorous examination. In fact, we still have to account for the singularities of $\hat{\source}$, and then we should look for the points for which the matrix pencil $\lapVar\fE-\fA$ is not invertible. Moreover, the asymptotic behavior of $\|(\lapVar\fE-\fA)^{-1}\|$ also has to be determined in order to have conditions under which \cref{ass:2} is satisfied. We elaborate on these aspects in \Cref{subsec:CIM-DAE}.

\subsection{Some standard tools for \DAE}\label{subsec:DAE}
First, to ensure the existence and uniqueness of the solution of \eqref{eqn:DAE}, the matrix pair $(\fE,\fA)$ must satisfy certain properties; see, for instance, \cite[Cha.~2]{KunM06}. In more detail, we assume that the matrix pair $(\fE,\fA)$ is \emph{regular}, i.e., $\det(\lapVar\fE-\fA)\in\C[\lapVar]\setminus\{0\}$ which means that the determinant of the matrix pencil is not the zero polynomial. In this case, one can show that in the space of piecewise-smooth distribution \cite{Tre09} the initial trajectory problem associated with the \DAE \eqref{eqn:DAE} has a unique solution for any initial value and any right-hand side. Regularity can be characterized by the Weierstrass form \cite{Gan59} or the slightly simplified \emph{quasi-Weierstrass form} (\QWF) \cite{BerIT12}.

\begin{theorem}[Quasi-Weierstrass Form, \cite{BerIT12}]\label{teo:QWF}
	A matrix pair $(\fE,\fA)\in\R^{\stateDim\times\stateDim}\times\R^{\stateDim\times\stateDim}$ is regular if and only if there exist invertible matrices $\fS,\fT\in\R^{\stateDim\times\stateDim}$ such that
	\begin{equation}
		\label{eqn:QWF}
		\left(\fS\fE\fT, \fS\fA\fT\right) = \Bigg(   \begin{bmatrix}
			\Id_{\stateDim_{\fJ}} &\zeroMat\\
			\zeroMat &\fN
		\end{bmatrix},\begin{bmatrix}
			\fJ&\zeroMat\\
			\zeroMat & \Id_{\stateDim_{\fN}}
		\end{bmatrix}  \Bigg),
	\end{equation}
	where $\fN\in\R^{\stateDim_{\fN} \times \stateDim_{\fN}}$ is nilpotent with nilpotency index $\indDAE$ and $\fJ\in\R^{\stateDim_{\fJ} \times \stateDim_{\fJ} }$, with $\stateDim_{\fJ}=\stateDim-\stateDim_{\fN}$. The decoupling \eqref{eqn:QWF} is called the quasi-Weierstrass form, and $\indDAE$ is also called the \DAE index.
\end{theorem}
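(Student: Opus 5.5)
The statement has two directions, and I would treat them separately. The direction ``QWF implies regular'' is immediate. Assume $\fS,\fT$ as in \eqref{eqn:QWF}. Then
\[
\det(\lapVar\fE-\fA)=\det(\fS)^{-1}\det(\fT)^{-1}\det(\lapVar\Id_{\stateDim_{\fJ}}-\fJ)\,\det(\lapVar\fN-\Id_{\stateDim_{\fN}}).
\]
Since $\fN$ is nilpotent, $\lapVar\fN-\Id$ is invertible for every $\lapVar$, with inverse $-\sum_{k=0}^{\indDAE-1}\lapVar^k\fN^k$. Equivalently, $\det(\lapVar\fN-\Id)=(-1)^{\stateDim_{\fN}}$. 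The factor $\det(\lapVar\Id-\fJ)$ is a monic polynomial of degree $\stateDim_{\fJ}$. Hence the determinant of the pencil is a nonzero polynomial, i.e.\ the pair is regular.

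For ``regular implies QWF'' I would follow the Wong-sequence approach of \cite{BerIT12}. Regularity gives some $\lambda\in\R$ with $\lambda\fE-\fA$ invertible. I would then define the Wong sequences
\[
\calV_0=\R^{\stateDim},\quad \calV_{i+1}=\fA^{-1}(\fE\calV_i),\qquad \calW_0=\{0\},\quad \calW_{i+1}=\fE^{-1}(\fA\calW_i),
\]
using the preimage notation \eqref{eqn:pre:img}. These are nested (decreasing, respectively increasing), so they stabilize at limits $\calV^*$ and $\calW^*$. The key steps, in order, are:
\begin{enumerate}
\item Show $\fA\calV^*\subseteq\fE\calV^*$ and $\fE\calW^*\subseteq\fA\calW^*$.
\item Show $\calV^*\cap\calW^*=\{0\}$ and $\calV^*\oplus\calW^*=\R^{\stateDim}$.
\item Show that $\fE\calV^*\cap\fA\calW^*=\{0\}$ and that $\fE\calV^*\oplus\fA\calW^*=\R^{\stateDim}$, with $\dim\fE\calV^*=\dim\calV^*$ and $\dim\fA\calW^*=\dim\calW^*$.
\item Choose bases $V,W$ of $\calV^*,\calW^*$ and set $\fT=[V\ W]$ and $\fS=[\fE V\ \ \fA W]^{-1}$.
\end{enumerate}
Step 1 gives matrices $\fJ$ and $\fN$ with $\fA V=\fE V\fJ$ and $\fE W=\fA W\fN$, and the transformed pair then has exactly the block form \eqref{eqn:QWF}.

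The main obstacle is step 2, the direct-sum decomposition. Here the invertibility of $\lambda\fE-\fA$ is used. I would first show $\calW_i=\ker\bigl((\lambda\fE-\fA)^{-1}\fE\bigr)^i$ and $\calV_i\supseteq\img\bigl((\lambda\fE-\fA)^{-1}\fE\bigr)^i$ by induction on $i$. With $\fK\coloneqq(\lambda\fE-\fA)^{-1}\fE$, the decomposition then follows from the Fitting decomposition $\R^{\stateDim}=\img\fK^k\oplus\ker\fK^k$ for large $k$. This also requires the reverse inclusion $\calV^*=\img\fK^k$, which I would get from a dimension count together with trivial intersection.

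Finally, nilpotency of $\fN$ comes from the fact that $\calW_i$ strictly increases to $\calW^*$ in finitely many steps. The nilpotency index of $\fN$ is $\indDAE$, the number of steps before $\calW_i$ stabilizes. Injectivity of $\fE$ on $\calV^*$ and of $\fA$ on $\calW^*$, needed in step 3, again follows from regularity. A nonzero $v\in\calV^*$ with $\fE v=0$ would give $\fA v\in\fE\calV^*$, and this would produce a nontrivial intersection with $\calW^*$, contradicting step 2.
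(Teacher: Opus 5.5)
Your route is the Wong-sequence construction of \cite{BerIT12}, which is the approach the paper relies on. The paper gives no proof of its own and records the construction $\fT=[V\ W]$, $\fS=[\fE V\ \fA W]^{-1}$ in Theorem~\ref{thm:QWF:Wong}. The easy direction, step~1 and the block computation are fine. The nilpotency sketch is also fine once you note that $\fN$ maps coordinates of $\calW_i$ into those of $\calW_{i-1}$, which uses injectivity of $\fA$ on $\calW^*$.

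\textbf{The gap is in step~2}, which you rightly call the crux. From $\calW_i=\ker\fK^i$, $\calV_i\supseteq\img\fK^i$ and the Fitting decomposition you only obtain $\calV^*+\calW^*=\R^{n}$. Given the sum, the other half, $\calV^*\cap\calW^*=\{0\}$, is equivalent to $\calV^*=\img\fK^k$. You propose to get the latter ``from a dimension count together with trivial intersection'', and that is circular. The dimension count needs $\dim\calV^*\le n-\dim\calW^*$, which is the trivial intersection itself. Your injectivity argument for $\fE$ on $\calV^*$ then rests on the same unproved claim. This is exactly the property that fails for singular pencils ($\fE=\fA=\zeroMat$ gives $\calV^*=\calW^*=\R^n$), so it cannot be skipped.

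The fix is to prove $\calV_i\subseteq\img\fK^i$ directly. With $\fM=\lambda\fE-\fA$ one has $\fM^{-1}\fA=\lambda\fK-\fI$, so $\fA x=\fE\fK^iz$ gives $x=\lambda\fK x-\fK^{i+1}z$. Iterating this identity yields $x=\lambda^{i+1}\fK^{i+1}x-\sum_{j=0}^{i}\lambda^j\fK^{i+1+j}z\in\img\fK^{i+1}$. Hence $\calV^*=\img\fK^k$ and $\calW^*=\ker\fK^k$, and Fitting gives the direct sum outright. Alternatively, for $v\in\calV^*$ write $\fA v=\fE v'$ with $v'\in\calV^*$. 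Then $v=\fK(\lambda v-v')$, so $\fK\calV^*=\calV^*$, $\fK$ is bijective on $\calV^*$, and $\calV^*\cap\ker\fK^k=\{0\}$.

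Step~3 is only half argued: injectivity gives the dimensions but not $\fE\calV^*\cap\fA\calW^*=\{0\}$. Once step~2 holds, this takes one line. From $\fA V=\fE V\fJ$ and $\fE W=\fA W\fN$ you get $(\lambda\fE-\fA)[V\ W]=[\fE V\ \fA W]\,\mathrm{diag}(\lambda\fI-\fJ,\lambda\fN-\fI)$. The left-hand side is invertible, hence so is $[\fE V\ \fA W]$, which gives all of step~3, injectivity included.

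Finally, take care with the induction step $\calW_{i+1}\supseteq\ker\fK^{i+1}$. There you must solve $(\fI-\lambda\fK)w=-\fK x$ with $w\in\ker\fK^i$, and $\fI-\lambda\fK=-\fM^{-1}\fA$ is singular whenever $\fA$ is. It is, however, invertible on the $\fK$-invariant subspace $\ker\fK^i$, where $\fK$ is nilpotent. This gives $w=-\sum_{j=0}^{i-1}\lambda^j\fK^{j+1}x$.
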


\begin{remark}
	The assumption that the finite eigenvalues of the matrix pair $(\fE,\fA)$ have a negative real part implies that the matrix $\fJ$ appearing in \eqref{eqn:QWF} is asymptotically stable. 
\end{remark}
Under the regularity assumption, for every regular matrix pair $(\fE,\fA)$ there exist unique subspaces $\calV,\calW \subseteq \R^\stateDim$ with $\calV \oplus\calW = \R^n$ such that for any choice of full column rank matrices $\fV$, $\fW$ with $\Col(\fV) = \calV$ and $\Col(\fW) = \calW$, the nonsingular
matrices $\fT = [\fV,\, \fW]$ and $\fS = [\fE\fV,\, \fA\fW]^{-1}$ transform \eqref{eqn:DAE} into a decoupled \DAE according to \eqref{eqn:QWF} with an \ODE part, often denoted as slow subsystem, of the form
\begin{equation}\label{eqn:DAE:slow}
    \dot{\stx}^{\diff}(t) \;=\; \fJ\stx^{\diff}(t)+\source_{{\diff}}(t),\quad \source_{{\diff}}(t)\;\vcentcolon=\;\begin{bmatrix}
        \fI_{\stateDim_{\fJ}}&\zeroVec,
    \end{bmatrix}\fS\source(t),
\end{equation}
and a nilpotent \DAE part, which in contrast is the fast subsystem, of the form
\begin{equation}\label{eqn:DAE:fast}
   	\fN\dot{\stx}^{\imp}(t) \;=\; \stx^{\imp}(t)+\source_{{\imp}}(t),\quad
	\source_{{\imp}}(t)\;\vcentcolon=\;\begin{bmatrix}
        \zeroVec& \fI_{\stateDim_{\fN}}
    \end{bmatrix}\fS\source(t),
\end{equation}
with $\stx(t) = \fT(\stx^{\diff}(t) \oplus\stx^{\imp}(t))$ for all $t\in\R_{\ge 0}$. This can be used to derive an explicit solution formula; see \cite[Cha.~2]{KunM06}. For the differential part coming from \eqref{eqn:DAE:slow} we have
\begin{equation}\label{eqn:sol:sub:slow}
    \stx^{\diff}(t)\;=\;\ee^{\fJ t} \stx^{\diff}_0+\int_{0}^{t}\ee^{\fJ(t-\tau)}\source_{{\diff}}\;\dtau,\quad \stx^{\diff}_0\;=\;\begin{bmatrix}
        \fI_{\stateDim_{\fJ}}&\zeroVec
    \end{bmatrix}\fT^{-1}\stx_0;
\end{equation}
while, for \eqref{eqn:DAE:fast} we have 
\begin{equation}\label{eqn:sol:sub:fast}
    \stx^{\imp}(t)\;=\;-\sum_{j=0}^{\indDAE-1} \fN^j\source^{(j)}_{\stx^{\imp}}(t).
\end{equation}
where we require $\source_{{\imp}}$ to be $\indDAE-1$ piecewise continuously differentiable. In particular, the fast subsystem~\eqref{eqn:DAE:fast} generates an inconsistency in the initial value, indeed by \eqref{eqn:sol:sub:slow}, \eqref{eqn:sol:sub:fast}, and the fact that $\stx = \fT(\stx^{\diff} \oplus\stx^{\imp})$ we get
\begin{equation}\label{eqn:ini:sol}
    \stx(0^{-})\;=\;\fT\left(\begin{bmatrix}
        \fI_{\stateDim_{\fJ}}&\zeroVec
    \end{bmatrix}\fT^{-1}\stx(0^{+})\oplus-\sum_{j=0}^{\indDAE-1} \fN^j\source^{(j)}_{\stx^{\imp}}(0^{-})\right),
\end{equation}
leaving the solution at time $t=0$ discontinuous. Therefore, distributional solutions are necessary to fully characterize the existence and uniqueness of \IVP~\eqref{eqn:DAE}. The space of a piecewise-smooth distribution, denoted as $\mathbb{D}_{pw\calC^{\infty}}$
(see \cite{Tre10}) is suitable to prove the existence and uniqueness result for the solution of \eqref{eqn:DAE}.
\begin{theorem}(Existence and uniqueness of \IVP solutions \cite[Thm.~6.5.1]{Tre12})
Consider the \IVP~\eqref{eqn:DAE} with regular matrix $(\fE,\fA)$. Then, for all initial trajectories $\stx_0 \in \mathbb{D}^{\stateDim}_{pw\calC^{\infty}}$ and all source function $\source \in \mathbb{D}^{\stateDim}_{pw\calC^{\infty}}$, there exist a unique $\stx \in \mathbb{D}^{\stateDim}_{pw\calC^{\infty}}$ that satisfies the \IVP~\eqref{eqn:DAE}.
\end{theorem}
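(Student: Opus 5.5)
We reduce the problem to the quasi-Weierstrass form of \Cref{teo:QWF} and solve the two decoupled subsystems separately in $\mathbb{D}_{pw\calC^{\infty}}$. Since $(\fE,\fA)$ is regular, there are invertible $\fS,\fT$ giving \eqref{eqn:QWF}. For the initial trajectory problem (ITP) in the distributional sense we use the standard formulation
\begin{equation*}
\stx_{(-\infty,0)}=\stx_{0,(-\infty,0)},\qquad (\fE\dot{\stx})_{[0,\infty)}=(\fA\stx+\source)_{[0,\infty)},
\end{equation*}
where restrictions to intervals are well defined on piecewise-smooth distributions. The first step is to check that the substitution $\stx=\fT\fy$, together with left multiplication by $\fS$, is a bijection between solutions of the original ITP and solutions of the transformed ITP. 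The transformed ITP has initial trajectory $\fT^{-1}\stx_0$ and source $\fS\source$. This holds because constant invertible matrices commute with differentiation and with restriction in $\mathbb{D}_{pw\calC^{\infty}}$. It is therefore enough to prove existence and uniqueness for the block-diagonal pair, that is, for \eqref{eqn:DAE:slow} and \eqref{eqn:DAE:fast} with their own initial trajectories.

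\textbf{Slow part.} For $\dot{\fy}=\fJ\fy+\source_{\diff}$ on $[0,\infty)$ with prescribed past, we set
\begin{equation*}
\fy=\fy_{0,(-\infty,0)}+\bigl(\ee^{\fJ\cdot}\fy_0(0^-)+\ee^{\fJ\cdot}\ast\source_{\diff}\bigr)_{[0,\infty)},
\end{equation*}
where the convolution is interpreted via the antiderivative operator on $\mathbb{D}_{pw\calC^{\infty}}$, which preserves piecewise smoothness. Existence is then a direct check. For uniqueness, the difference $\fz$ of two solutions satisfies $\fz_{(-\infty,0)}=0$ and $(\dot{\fz}-\fJ\fz)_{[0,\infty)}=0$. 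We expand $\fz$ on $[0,\infty)$ as a smooth part plus Dirac impulses and their derivatives at the discontinuity points. Comparing impulsive parts shows there are no impulses and no jumps, in particular none at $0$, because the past is zero. The smooth part then solves a classical ODE with zero initial value, so $\fz=0$.

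\textbf{Fast part.} For $\fN\dot{\fw}=\fw+\source_{\imp}$ on $[0,\infty)$, we rewrite the equation as $\fw=-\source_{\imp}+\fN\dot{\fw}$ on $[0,\infty)$. Iterating $\indDAE$ times and using $\fN^{\indDAE}=0$ gives the unique candidate: it equals $\fw_{0}$ on $(-\infty,0)$, and on $[0,\infty)$ it is
\begin{equation*}
-\sum_{j=0}^{\indDAE-1}\fN^j\bigl(\tfrac{d}{dt}\bigr)^{j}\bigl(\source_{\imp}\bigr)_{[0,\infty)}
\end{equation*}
plus the impulsive terms produced by the mismatch with $\fw_0(0^-)$. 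This is the distributional analogue of \eqref{eqn:sol:sub:fast}; the jump at $0$ is the one in \eqref{eqn:ini:sol}. The iteration argument gives both existence and uniqueness, because it forces any solution to coincide with the formula.

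\textbf{Main obstacle.} The delicate step is the fast-part iteration. Differentiation and restriction do not commute: $(\dot{\fw})_{[0,\infty)}\neq \frac{d}{dt}(\fw_{[0,\infty)})$, and the two differ by a Dirac term $\fw(0^-)\delta_0$. We will handle this with the identity
\begin{equation*}
(\dot{\fw})_{[0,\infty)}=\frac{d}{dt}\bigl(\fw_{[0,\infty)}\bigr)+\bigl(\fw(0^+)-\fw(0^-)\bigr)\delta_0,
\end{equation*}
and carry these Dirac terms through the $\indDAE$ iterations. This is where the existence of left limits, a property of $\mathbb{D}_{pw\calC^{\infty}}$, is essential.
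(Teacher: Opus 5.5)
The paper gives no proof of its own for this statement; it imports the result from Trenn's thesis. Your route is the standard argument behind it: read the statement as the initial trajectory problem, decouple via the quasi-Weierstrass form, use variation of constants for the $\fJ$-block, and invert $\fN\frac{d}{dt}-\Id$ for the nilpotent block. There is, however, a concrete error in exactly the step you flag as the main obstacle: the displayed identity is false.

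Write $\bm{w}=\bm{w}_{(-\infty,0)}+\bm{w}_{[0,\infty)}$. The distribution $\bm{w}_{[0,\infty)}$ jumps from $0$ to $\bm{w}(0^+)$ at $t=0$, so $\frac{d}{dt}(\bm{w}_{[0,\infty)})$ already contains $\bm{w}(0^+)\delta_0$. The only Dirac term lost by restricting first comes from the jump of $\bm{w}_{(-\infty,0)}$ from $\bm{w}(0^-)$ to $0$. The correct relation is
\[
(\dot{\bm{w}})_{[0,\infty)}=\frac{d}{dt}\bigl(\bm{w}_{[0,\infty)}\bigr)-\bm{w}(0^-)\,\delta_0 .
\]
This agrees (up to sign) with the sentence preceding your display, but not with the display itself. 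Your version inserts a spurious term $\fN\bm{w}(0^+)\delta_0$ into the fast equation, and carrying it through the iteration produces wrong impulses. For example, take $\indDAE=2$, $\bm{w}_0=\zeroVec$, and a constant source $\bm{c}$ with $\fN\bm{c}\neq\zeroVec$. You would obtain the impulse $-2\fN\bm{c}\,\delta_0$ at $t=0$ instead of $-\fN\bm{c}\,\delta_0$, and that candidate does not solve the problem. So the existence part, which is where you use this identity, does not go through as written, even though the shape of your final fast-part candidate is right.

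With the correct identity, you do not need any Dirac bookkeeping through $\indDAE$ iterations. For $\bm{w}_{+}\vcentcolon=\bm{w}_{[0,\infty)}$, the fast problem is equivalent to the single equation on all of $\R$
\[
\Bigl(\fN\frac{d}{dt}-\Id\Bigr)\bm{w}_{+}=(\source_{\imp})_{[0,\infty)}+\fN\bm{w}_0(0^-)\,\delta_0 .
\]
Since $\fN$ commutes with $\frac{d}{dt}$ and $\fN^{\indDAE}=\zeroMat$, the operator $\fN\frac{d}{dt}-\Id$ is bijective on $\mathbb{D}^{\stateDim_{\fN}}_{pw\calC^{\infty}}$, with inverse $-\sum_{i=0}^{\indDAE-1}(\fN\frac{d}{dt})^i$, and this inverse preserves support in $[0,\infty)$. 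Existence and uniqueness follow at once. The impulsive contribution of the initial trajectory is $-\sum_{i=1}^{\indDAE-1}\fN^{i}\bm{w}_0(0^-)\,\delta_0^{(i-1)}$, the time-domain counterpart of the second sum in \eqref{eqn:imp:3}.

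The same reformulation works for the slow block: $\frac{d}{dt}\bm{y}_{+}=\fJ\bm{y}_{+}+(\source_{\diff})_{[0,\infty)}+\bm{y}_0(0^-)\,\delta_0$. This reproduces your formula, with the convolution understood against $(\source_{\diff})_{[0,\infty)}$ and the kernel $\ee^{\fJ t}$ restricted to $[0,\infty)$. It also makes uniqueness a one-liner. A difference $\bm{z}$ vanishing on $(-\infty,0)$ satisfies $\dot{\bm{z}}=\fJ\bm{z}$ on $\R$, hence $\frac{d}{dt}(\ee^{-\fJ t}\bm{z})=0$ and $\bm{z}=0$, with no impulse-by-impulse comparison needed.
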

To have a continuous solution it is necessary that $\stx(0^+)=\stx(0^-)$, implying that \eqref{eqn:ini:sol} imposes a so-called \emph{consistency condition} on the initial value $\stx_0$ for a classical solution to exist; see \cite[Cha.~2]{KunM06}. Let us observe that the matrix 
\begin{align}\label{eqn:diff:pro}
    \begin{aligned}
            \fPi\;\vcentcolon=\;\fT\begin{bmatrix}
                \fI_{\stateDim_{\fJ}}&\zeroMat\\
                \zeroMat&\zeroMat
            \end{bmatrix}\fT^{-1}
    \end{aligned}
\end{align}
is a projector on the differential space associated with the \DAE.  The matrices~$\fS,\fT$ can be constructed using the \emph{Wong sequences} \cite{Won74}, which are defined as 
\begin{subequations}
	\label{eqn:WongSequences}
	\begin{align}
		\calV^0 &\coloneqq \R^n, & \calV^{i+1} &\coloneqq \fA^{-1}(\fE\calV^i), &  i\in\N,\\
		\calW^0 &\coloneqq \{0\}, & \calW^{j+1} &\coloneqq \fE^{-1}(\fA\calW^j), & j\in\N,\label{eqn:WongSequencesImp}
	\end{align}
\end{subequations}
where we use the notation for the preimage as in~\eqref{eqn:pre:img}. After finitely many steps the sequences in \eqref{eqn:WongSequences} converge and the limits are given by
\begin{equation}
	\label{eqn:WongSequences:limits}
	\calV^\star \coloneqq \bigcap_{i\in\N} \calV^i \qquad\text{and}\qquad \calW^\star \coloneqq \bigcup_{i\in\N} \calW^i.
\end{equation}
	
\begin{theorem}[{\QWF via Wong sequences, \cite[Thm.~2.6]{BerIT12}}]
	\label{thm:QWF:Wong}  
	Consider a regular matrix pair $(\fE,\fA)$ with corresponding Wong limits $\calV^\star$ and $\calW^\star$. For any full rank matrices~$\difW$ and $\impW$ such that $\img(\difW)= \calV^\star$ and $\img(\impW) = \calW^\star$, the matrices 
	\begin{equation}\label{eqn:Wong:mat}
		\fT = [\difW, \impW],\quad \fS = [\fE\difW, \fA\impW]^{-1}
	\end{equation}
	are invertible and transform $(\fE,\fA)$ into \QWF~\eqref{eqn:QWF}.
\end{theorem}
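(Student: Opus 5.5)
The plan has three parts. First, show that $\calV^\star\oplus\calW^\star=\R^\stateDim$. Second, show that $[\fE\difW,\fA\impW]$ is invertible. Third, read off the block structure by expressing $\fE\fT$ and $\fA\fT$ in the basis given by the columns of $\fS^{-1}=[\fE\difW,\fA\impW]$.

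The structural facts I will use follow directly from the definition \eqref{eqn:WongSequences}. The sequence $\calV^i$ is nested decreasing and $\calW^j$ is nested increasing, both by induction, so the limits \eqref{eqn:WongSequences:limits} are reached after finitely many steps. The limits satisfy the invariance relations
\[
\fA\calV^\star\subseteq\fE\calV^\star \quad\text{and}\quad \fE\calW^\star\subseteq\fA\calW^\star .
\]

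\textbf{Step 1: trivial intersections, using regularity.} I first show $\calV^\star\cap\calW^\star=\{0\}$. Take $x\in\calV^\star\cap\calW^i$ and argue by induction on $i$. The case $i=0$ is immediate. For the inductive step, $\fE x=\fA y$ with $y\in\calW^{i-1}$. Since $\fA\calV^\star\subseteq\fE\calV^\star$, one can build chains of vectors $v_k\in\calV^\star$ with $\fA v_k=\fE v_{k+1}$. Using the finite-dimensionality of $\calV^\star$, such chains produce a nontrivial polynomial vector $v(\lambda)$ with $(\lambda\fE-\fA)v(\lambda)\equiv0$, and this contradicts regularity; this is the standard Wong argument.

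A cleaner route is available. Restricted to $\calV^\star$, the map $\fE|_{\calV^\star}$ must be injective: otherwise a kernel vector $v$ in $\calV^\star$ could be lifted, through the relation $\fA\calV^\star\subseteq\fE\calV^\star$, to a singular chain. Given this injectivity, $\fA$ restricted to $\calV^\star$ acts as $\fE J$ for some matrix $J$. Dually, $\fA|_{\calW^\star}$ is injective, and $\fE$ restricted to $\calW^\star$ acts as $\fA N$ with $N$ nilpotent, by the construction of the $\calW^j$.

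\textbf{Step 2: dimension count, the main obstacle.} I must show $\dim\calV^\star+\dim\calW^\star=\stateDim$ and also $\fE\calV^\star\cap\fA\calW^\star=\{0\}$. My plan is to take $\lambda$ with $\lambda\fE-\fA$ invertible; such $\lambda$ exists by regularity. The key identities are
\[
\calV^\star=\big((\lambda\fE-\fA)^{-1}\fE\big)^k\R^\stateDim \ \text{ for large } k, \qquad \calW^\star=\ker\big((\lambda\fE-\fA)^{-1}\fE\big)^k .
\]
These reduce everything to the Fitting decomposition of the single matrix $M=(\lambda\fE-\fA)^{-1}\fE$: for large $k$, $\R^\stateDim=\img M^k\oplus\ker M^k$. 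Verifying these identities is the hard part. It requires showing that $\calV^i$ and $\img M^i$ stabilize to the same subspace, using $\fA=\lambda\fE-(\lambda\fE-\fA)$, and doing the same for $\calW^j$ and $\ker M^j$.

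\textbf{Step 3: invertibility and block form.} Injectivity of $\fE$ on $\calV^\star$ and of $\fA$ on $\calW^\star$ (Step 1) gives $\fE\calV^\star\oplus\fA\calW^\star=(\lambda\fE-\fA)(\calV^\star\oplus\calW^\star)$. This identification holds because $(\lambda\fE-\fA)$ maps $\calV^\star$ onto $\fE\calV^\star$ (since $\lambda\fE-\fA=\fE(\lambda I-J)$ there) and maps $\calW^\star$ onto $\fA\calW^\star$ (since $\lambda\fE-\fA=\fA(\lambda N-I)$ there, with $\lambda N-I$ invertible). Hence $\fT$ and $\fS^{-1}$ are invertible. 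Finally, from $\fE\difW=\fE\difW\cdot I$, $\fA\difW=\fE\difW J$, $\fE\impW=\fA\impW N$ and $\fA\impW=\fA\impW\cdot I$, multiplying by $\fS$ yields exactly the block form \eqref{eqn:QWF}.
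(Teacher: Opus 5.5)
Your proof is correct. The paper gives no argument for this theorem (it is quoted from \cite{BerIT12}), so your route stands as a self-contained proof. The step you call the hard part is in fact a short induction, and the identities hold at every index, not only in the limit.

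Take $\lambda\in\R$ with $\lambda\fE-\fA$ invertible. This is possible because a regular pencil has finitely many eigenvalues, and a real $\lambda$ keeps everything in $\R^{\stateDim}$. Set $P\coloneqq(\lambda\fE-\fA)^{-1}$ and $M\coloneqq P\fE$, so that $P\fA=\lambda M-\fI$. Left multiplication by $P$ does not change the preimages in \eqref{eqn:WongSequences}, hence $\calV^{i+1}=\{x:(\lambda M-\fI)x\in M\calV^i\}$ and $\calW^{j+1}=\{x:Mx\in(\lambda M-\fI)\calW^j\}$.

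For the first sequence, suppose $\calV^i=\img M^i$ and $(\lambda M-\fI)x=y\in\img M^{i+1}$. Iterating $x=\lambda Mx-y$ gives $x=\lambda^{i+1}M^{i+1}x-\sum_{l=0}^{i}\lambda^lM^ly\in\img M^{i+1}$, and the reverse inclusion is clear. For the second, suppose $\calW^j=\ker M^j$. Then $\lambda M-\fI$ commutes with $M$ and is invertible on $\ker M^j$, so $\calW^{j+1}=\{x:Mx\in\ker M^j\}=\ker M^{j+1}$.

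Now write $M\difW=\difW M_{\calV}$ and $M\impW=\impW M_{\calW}$, where $M_{\calV}$ is invertible and $M_{\calW}$ is nilpotent by the Fitting decomposition. You can then read off
\[
\fA\difW=\fE\difW\bigl(\lambda\fI-M_{\calV}^{-1}\bigr),\qquad
\fE\impW=\fA\impW\bigl(\lambda M_{\calW}-\fI\bigr)^{-1}M_{\calW},\qquad
[\fE\difW,\fA\impW]=P^{-1}[\difW,\impW]\begin{bmatrix}M_{\calV}&\zeroMat\\ \zeroMat&\lambda M_{\calW}-\fI\end{bmatrix}.
\]
This gives at once the invertibility of $\fS$, explicit formulas for $J$ and $N$, and the nilpotency of $N$.

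Step~1 is then superfluous, and it is better dropped, because as written it is only a sketch. The chain argument produces an actual kernel vector of $\lambda\fE-\fA$ only after you fix $J$ with $\fA\difW=\fE\difW J$; at that point it reduces to the one-liner $(\lambda\fE-\fA)\difW=\fE\difW(\lambda\fI-J)$. The induction on $\calV^\star\cap\calW^i$ is also never closed. Likewise, the invertibility of $\lambda\fI-J$ that Step~3 uses tacitly is automatic, since $\lambda\fI-J=M_{\calV}^{-1}$.

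Your normalization spends regularity at the outset on an auxiliary point $\lambda$. After that, everything follows from one standard decomposition of a single matrix. A by-product is that $\calV^\star=\img M^{\indDAE}$ and $\calW^\star=\ker M^{\indDAE}$ do not depend on $\lambda$, and that $\indDAE$ is the index of $M$ at zero.

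An argument phrased purely in terms of the Wong sequences needs no such $\lambda$. That kind of argument carries over to non-regular pencils, where $\lambda\fE-\fA$ is never invertible and your reduction is unavailable.
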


\subsection{Are \CIM suitable for \DAE?}\label{subsec:CIM-DAE}
We start by characterizing the behavior of $\|(\lapVar\fE-\fA)^{-1}\|$ in the complex plane. We first review the standard \ODE case where $\fE=\fI_{\stateDim}$, then discuss the matrix pencil arising from the \DAE setting, and finally provide conditions under which the \CIM approximation detailed in \Cref{subsec:CIM} is well suited for the \DAE setting.

\subsubsection{The \ODE case}
Let us recall some standard knowledge about the case $\fE=\Id_{\stateDim}$, i.e., when \eqref{eqn:DAE} is a linear time-invariant system of \ODE. In such a setting, the magnitude of the resolvent norm $\|(\lapVar\Id_{\stateDim}- \fA)^{-1}\|$ plays a crucial role in the convergence rate of any contour integral method based on the Laplace transformation; see \cite{GugLM21}. Due to this, the choice and parameterization of the integration contour are of major importance and need to account for the magnitude of the resolvent norm. This choice is made in \cite{GugLM21} through knowledge of the $\varepsilon$-pseudospectrum of $\fA$ (see \cite{TreE05}), which can be defined as
\begin{equation}\label{eqn:eps:pseudo}
    \sigma_{\varepsilon}(\fA)\;\vcentcolon=\;\left\{\lapVar\in\C\;|\;\|(\lapVar\Id_{\stateDim}- \fA)^{-1}\|\;\ge\;\frac{1}{\varepsilon} \right\},
\end{equation}
for suitable values of $\varepsilon>0$. If $\fA$ is normal, then \eqref{eqn:eps:pseudo} can be completely characterized in terms of the distance of $\lapVar$ from the spectrum of $\fA$. However, since $\fA$ is in general nonnormal, for example, in the case where $\fA$ includes the discretization of a convective term, $\|(\lapVar\Id_{\stateDim}- \fA)^{-1}\|$ may be large even when $\lapVar$ is not close to the spectrum of $\fA$. Since the eigenvalues of $\fA$ are finite, it is immediate that the asymptotic behavior of $\|(\lapVar\Id_{\stateDim}- \fA)^{-1}\|$ is the same as $|z|^{-1}$. For this reason, in the context of \ODE, \CIMs are well suited whenever 
\[
\lim_{|\lapVar|\rightarrow\infty}\frac{\hat{\source}(\lapVar)}{|\lapVar|^{\alpha}}\;\lutc \;1,\quad \text{with}\quad \alpha<1.
\]

We refer to the method proposed in \cite{GugLM21} for the construction of the integration profile $\Gamma$ guided by the   evaluation of the resolvent norm in suitable points. Recently, an algorithm that approximates the resolvent norm over a compact subset of the complex plane was proposed in \cite[Sec.~5]{ManMG26}. The method is based on the use of the subspace approach and in the interpretation of the frequency variable $\lapVar$ as a two-dimensional parameter. The same idea, but with a different subspace approach, was previously presented in \cite{Sir19}. 

\subsubsection{Asymptotic and not asymptotic behavior of \texorpdfstring{$(\lapVar\fE-\fA)^{-1}$}{TEXT}}
The norm of the matrix pencil $(\lapVar\fE-\fA)^{-1}$ has been studied in the context of transient growth bound for systems of \DAE, see \cite{EmbK17}. With this aim, the authors propose a definition of the $\varepsilon$-pseudospectrum for the matrix pair $(\fE,\fA)$. In our contest, we are not interested in the transient growth of \DAE, but rather in the location of the singularities of the rational function $\|(\lapVar\fE-\fA)^{-1}\|$ and in the characterization of the asymptotic behavior of $\|(\lapVar\fE-\fA)^{-1}\|$ as well as its behavior in specific regions of the complex plane. The following proposition provides such a characterization. Such a result is certainly known, although we did not find an explicit reference in the literature; we therefore state it here for the sake of completeness.

\begin{proposition}\label{lemma1}
    Suppose that the matrix pair $(\fE,\fA)$ is regular and consider its quasi-Weierstrass form via the matrices $\fS,\fT,\fJ,$ and $\fN$ as described in \Cref{subsec:DAE}. Then, for any $\lapVar\in\C$, one has
    \begin{equation}\label{eqn:bound:psEA}
        \|(\lapVar\fE-\fA)^{-1}\|\;=\;\max\left(\|(\lapVar\Id_{\stateDim_{\fJ}}-\fJ)^{-1}\|,\Bigg\|\sum_{j=0}^{\indDAE-1}\lapVar^j\fN^j\Bigg\|\right).
    \end{equation}
\end{proposition}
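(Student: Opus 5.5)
The plan is to reduce everything to the quasi-Weierstrass form of \Cref{teo:QWF} and then read off the norm of a block-diagonal matrix. Fix $\lapVar\in\C$ that is not a finite eigenvalue of $(\fE,\fA)$, so that $\lapVar\fE-\fA$ is invertible by regularity. By \eqref{eqn:QWF},
\begin{equation*}
\fS(\lapVar\fE-\fA)\fT=\begin{bmatrix}\lapVar\Id_{\stateDim_{\fJ}}-\fJ&\zeroMat\\ \zeroMat&\lapVar\fN-\Id_{\stateDim_{\fN}}\end{bmatrix},
\end{equation*}
and hence
\begin{equation*}
(\lapVar\fE-\fA)^{-1}=\fT\begin{bmatrix}(\lapVar\Id_{\stateDim_{\fJ}}-\fJ)^{-1}&\zeroMat\\ \zeroMat&(\lapVar\fN-\Id_{\stateDim_{\fN}})^{-1}\end{bmatrix}\fS.
\end{equation*}

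The second diagonal block is handled by a finite Neumann series. Because $\fN$ is nilpotent with index $\indDAE$, we have $\fN^{\indDAE}=\zeroMat$, so $\lapVar\fN-\Id_{\stateDim_{\fN}}$ is invertible for every $\lapVar$ and
\begin{equation*}
(\lapVar\fN-\Id_{\stateDim_{\fN}})^{-1}=-\sum_{j=0}^{\indDAE-1}\lapVar^j\fN^j.
\end{equation*}
This identity is checked by multiplying out and telescoping. The sign does not affect the norm.

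Next comes the block-diagonal norm identity: for any block-diagonal matrix $\mathrm{diag}(\fX,\fY)$, the induced $2$-norm equals $\max(\|\fX\|,\|\fY\|)$. To see this, split a unit vector as $\fx\oplus\fy$ and use
\begin{equation*}
\|\fX\fx\|^2+\|\fY\fy\|^2\le\max(\|\fX\|,\|\fY\|)^2\left(\|\fx\|^2+\|\fy\|^2\right).
\end{equation*}
Equality is attained by concentrating the vector on the block with the larger norm. Together with the previous two steps, this gives exactly the right-hand side of \eqref{eqn:bound:psEA} for the middle factor.

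The main obstacle is removing the outer factors $\fT$ and $\fS$ so that the equality holds for $\|(\lapVar\fE-\fA)^{-1}\|$ itself. I would treat this as the step where the norm must be understood in the coordinates of the decomposition. Concretely, equip the state space with the norm $\|\fx\|_{\fT}:=\|\fT^{-1}\fx\|$ and the equation space with $\|\fy\|_{\fS}:=\|\fS\fy\|$. These are precisely the norms in which the decoupling into the slow subsystem \eqref{eqn:DAE:slow} and the fast subsystem \eqref{eqn:DAE:fast} is orthogonal. With respect to them, $\fT$ and $\fS$ are isometries, and the induced operator norm of $(\lapVar\fE-\fA)^{-1}$ coincides with the $2$-norm of the middle block-diagonal factor, which yields \eqref{eqn:bound:psEA}.

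I would close the argument with a short remark on when this matches the plain Euclidean $2$-norm. That happens when $\fS$ and $\fT$ can be chosen unitary, or unitary up to reciprocal scalar factors. By \Cref{thm:QWF:Wong}, this amounts to $\calV^\star\perp\calW^\star$ and $\fE\calV^\star\perp\fA\calW^\star$, since one can then pick orthonormal bases $\difW,\impW$. In the general case one only gets equivalence up to the factors $\|\fS\|\,\|\fT\|$ and $\|\fS^{-1}\|\,\|\fT^{-1}\|$. This suffices for the growth statement that is actually used, namely polynomial growth of degree $\indDAE-1$ as $|\lapVar|\to\infty$ and singularities only at the eigenvalues of $\fJ$.
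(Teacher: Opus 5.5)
Your skeleton is the same as the paper's: the QWF, the finite Neumann series for $(\lapVar\fN-\Id_{\stateDim_{\fN}})^{-1}$, and the block-diagonal norm identity. You depart from it at the one step that matters, and your departure is the correct one.

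The paper removes the outer factors by asserting that ``it is not restrictive to assume'' $\fS,\fT$ orthonormal, and then uses unitary invariance. That normalization is not available in general, and \eqref{eqn:bound:psEA} in the Euclidean $2$-norm is false as stated. Take $\stateDim=2$, $\fE=\mathrm{diag}(2,0)$, $\fA=\mathrm{diag}(-1,1)$. The pair is regular with the single finite eigenvalue $-1/2$, so the $1\times 1$ blocks are forced: $\fJ=-1/2$, $\fN=0$, $\indDAE=1$. At $\lapVar=0$ the left-hand side is $\|\fA^{-1}\|=1$, while the right-hand side is $\max(2,1)=2$. No unitary $\fS,\fT$, even with reciprocal scalar factors, can produce the QWF here. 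Otherwise $\fS\fE\fT$ would keep the singular values $2,0$ of $\fE$ instead of having singular values $1,0$.

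Your identity in the norms $\|\fT^{-1}\cdot\|$ and $\|\fS\,\cdot\|$ is therefore the true version of the statement. Your two-sided Euclidean bounds with $\|\fS\|\,\|\fT\|$ and $\|\fS^{-1}\|\,\|\fT^{-1}\|$ are what can honestly be said in the $2$-norm. They keep everything used later: singularities only at eigenvalues of $\fJ$, and polynomial growth of degree $\indDAE-1$. The proof of \Cref{lemma2}, which again assumes orthonormal $\fS,\fT$, only needs bounds up to such constants. You also write the inverse correctly as $\fT(\cdot)\fS$; the paper's $\fT^{-1}(\cdot)\fS^{-1}$ is a slip.

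One correction to your closing remark. The conditions $\calV^\star\perp\calW^\star$ and $\fE\calV^\star\perp\fA\calW^\star$ are necessary for (scaled) unitary $\fS,\fT$, but not sufficient. Choosing orthonormal $\difW,\impW$ makes $\fT$ unitary. However, $\fS^{-1}=[\fE\difW,\fA\impW]$ has orthonormal columns only if, in addition, $\fE$ acts isometrically on $\calV^\star$ and $\fA$ acts isometrically on $\calW^\star$. The example above satisfies both orthogonality conditions, with $\calV^\star=\fE\calV^\star=\mathrm{span}(\fe_1)$ and $\calW^\star=\fA\calW^\star=\mathrm{span}(\fe_2)$. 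It still violates the identity, precisely because $\fE$ stretches $\calV^\star$ by the factor $2$.
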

\begin{proof}
    Since the matrix pair $(\fE,\fA)$ is assumed to be regular, there exist two invertible square matrices $\fS,\fT$ that allow us to decouple the system using \QWF~\eqref{eqn:QWF}. It is not restrictive to assume that these matrices are also orthonormal. Thus, we have
    \begin{align}
    \begin{aligned}
        \|(\lapVar\fE-\fA)^{-1}\| \; =&\;\|\fT^{-1}(\lapVar\fS\fE\fT-\fS\fA\fT)^{-1}\fS^{-1}\|\;\\
        =&\;\Bigg\|\fT^{-1}\begin{bmatrix}
            (\lapVar\Id_{\stateDim_{\fJ}}-\fJ)^{-1}&\zeroMat\\
            \zeroMat&(\lapVar\fN-\Id_{\stateDim_{\fN}})^{-1}
        \end{bmatrix}\fS^{-1}\Bigg\|\;\\
        =&\;\max\left(\|(\lapVar\Id_{\stateDim_{\fJ}}-\fJ)^{-1}\|,\|(\lapVar\fN-\Id_{\stateDim_{\fN}})^{-1}\|\right),
        \end{aligned}
    \end{align}
    where we used the fact that the spectral norm is invariant under unitary transformations and that the singular values of block diagonal matrix are the union of the singular values of each block. Regarding $(\lapVar\fN-\fI)^{-1}$ we can use
    \begin{equation}
        (\lapVar\fN-\fI_{\stateDim_{\fN}})^{-1}\;=\;-\sum_{j=0}^{\indDAE-1}\lapVar^j\fN^j\;=\;\Id_{\stateDim_{\fN}}+\lapVar\fN+\dots+\lapVar^{\indDAE-1}\fN^{\indDAE-1},
    \end{equation}
    from which follows
    \begin{equation*}
         \|(\lapVar\fE-\fA)^{-1}\|\;=\;\max\left(\|(\lapVar\Id_{\stateDim_{\fJ}}-\fJ)^{-1}\|,\Bigg\|\sum_{j=0}^{\indDAE-1}\lapVar^j\fN^j\Bigg\|\right).
    \end{equation*}
\end{proof}
 
    From the proof of \Cref{lemma1} it is clear that the singularities of $(\lapVar\fE-\fA)^{-1}$ coincide with the eigenvalues of $\fJ$, i.e., with the finite eigenvalues of the matrix pair $(\fE,\fA)$. In fact, $\lapVar\fN-\Id_{\stateDim_{\fN}}$ is invertible $\forall \lapVar\in\C$ as $\fN$ is a nilpotent matrix. The behavior of $\|(\lapVar \Id_{\stateDim_{\fJ}}-\fJ)^{-1}\|$ is the same as that of a resolvent of a standard Hurwitz matrix ($\fJ$ is asymptotically stable by assumption); thus, in compact regions of the complex plane where $|\lapVar|$ is sufficiently small, $\|(\lapVar \fE-\fA)^{-1}\|$ behaves as the resolvent of the matrix $\fJ$. This implies that standard pseudospectra routines may be suitable for the identification of level sets of $\|(\lapVar\fE-\fA)^{-1}\|$ in certain regions of the complex plane. 
    
    The issue with the term $\lapVar\fN-\Id_{\stateDim_{\fN}}$ is that, asymptotically, it makes $\|(\lapVar\fE-\fA)^{-1}\|$ unbounded; thus, at first glance, it does not allow \cref{ass:2} to be generically satisfied and also generates singularity points at infinity. However, as we show in the next subsection, under standard conditions for the time integration of \DAE, we can guarantee that \cref{ass:1} and \cref{ass:2} are satisfied.
    
     \subsubsection{Conditions for the \CIM applied to \DAE}

    \begin{theorem}\label{lemma2}
        Assume that the matrix pair $(\fE,\fA)$ is regular and that $\source_{{\diff}}$ and $\source_{{\imp}}$, as defined in \eqref{eqn:DAE:slow} and \eqref{eqn:DAE:fast}, respectively, fulfill the conditions
         \begin{subequations}\label{eqn:dec:ass}
             \begin{align}
                \lim_{|\lapVar|\rightarrow\infty} \frac{\|\hat \source_{{\diff}}(\lapVar)\|}{|\lapVar|^{\alpha}}\;&\lutc\;1,\label{eqn:ass:dif:source}\\
            \lim_{|\lapVar|\rightarrow\infty} \|\hat \source^{(j)}_{{\imp}}(\lapVar)\|\;&\rightarrow\;0,\quad j=0,\ldots,\indDAE-1;\label{eqn:ass:imp:source}
             \end{align}
         \end{subequations}
         with $\alpha<1$ and $\hat \source^{(j)}_{{\imp}}$ denoting the Laplace transform of the $j$st time derivative of $\source^{(j)}_{{\imp}}$. Then the integrand in \eqref{eqn:inv:Lap} is decaying, i.e. 
         \begin{equation}\label{lemma2:statement}
             \lim_{|\omega|\rightarrow\infty} \|\ee^{\gamma+\imagunit\omega}\hat{\stx}(\gamma+\imagunit\omega)\|\rightarrow0.
         \end{equation}
         \end{theorem}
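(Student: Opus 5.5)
The plan is to pass to the quasi-Weierstrass coordinates of \Cref{teo:QWF} and treat the slow and fast parts separately. Write $\hat{\stx}(\lapVar)=\fT\bigl(\hat{\stx}^{\diff}(\lapVar)\oplus\hat{\stx}^{\imp}(\lapVar)\bigr)$. Multiplying \eqref{eqn:lap} on the left by $\fS$ and inserting $\fT\fT^{-1}$ gives
\begin{equation*}
\hat{\stx}^{\diff}(\lapVar)=(\lapVar\Id_{\stateDim_{\fJ}}-\fJ)^{-1}\bigl(\stx^{\diff}_0+\hat\source_{\diff}(\lapVar)\bigr),\qquad
\hat{\stx}^{\imp}(\lapVar)=(\lapVar\fN-\Id_{\stateDim_{\fN}})^{-1}\bigl(\fN\stx^{\imp}_0+\hat\source_{\imp}(\lapVar)\bigr).
\end{equation*}
Since $\|\fT\|$ is a fixed constant, it suffices to show that each block tends to zero along the line $\lapVar=\gamma+\imagunit\omega$ as $|\omega|\to\infty$. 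The factor $\ee^{\lapVar t}$ has modulus $\ee^{\gamma t}$, which is bounded for fixed $t$, so it plays no role.

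\textbf{Slow part.} Because $\fJ$ has finitely many eigenvalues, $\|(\lapVar\Id-\fJ)^{-1}\|\lutc|\lapVar|^{-1}$ for large $|\lapVar|$, exactly as in the \ODE case recalled above. By \eqref{eqn:ass:dif:source}, $\|\hat\source_{\diff}(\lapVar)\|\lutc|\lapVar|^{\alpha}$. Hence $\|\hat{\stx}^{\diff}\|\lutc|\lapVar|^{-1}+|\lapVar|^{\alpha-1}\to0$, since $\alpha<1$.

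\textbf{Fast part.} Here one must not bound $\|(\lapVar\fN-\Id)^{-1}\|$ directly: by \Cref{lemma1} it grows like $|\lapVar|^{\indDAE-1}$. Instead I will expand the Neumann sum $-\sum_{j=0}^{\indDAE-1}\lapVar^j\fN^j$ and use the Laplace differentiation rule
\begin{equation*}
\lapVar^j\hat\source_{\imp}(\lapVar)=\hat\source^{(j)}_{\imp}(\lapVar)+\sum_{k=0}^{j-1}\lapVar^{j-1-k}\source_{\imp}^{(k)}(0^-).
\end{equation*}
Substituting this into the fast block, the terms $-\sum_j\fN^j\hat\source^{(j)}_{\imp}(\lapVar)$ tend to zero by \eqref{eqn:ass:imp:source}. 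This is simply the Laplace-domain form of \eqref{eqn:sol:sub:fast}.

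What remains is a polynomial in $\lapVar$ collecting the boundary terms $\fN^j\lapVar^{j-1-k}\source_{\imp}^{(k)}(0^-)$ and the contribution $-\sum_j\lapVar^j\fN^{j+1}\stx^{\imp}_0$. This polynomial is the main obstacle. It is the Laplace image of the Dirac impulses produced by an inconsistent initial value, see \eqref{eqn:ini:sol}. The step I expect to be hardest is showing that it vanishes. I will first try to prove that it cancels identically when $\stx_0$ satisfies the consistency condition $\stx^{\imp}_0=-\sum_j\fN^j\source^{(j)}_{\imp}(0)$. To check this, I will reindex the double sum with $m=j-k$ and use $\fN^{\indDAE}=\zeroMat$: the coefficient of $\lapVar^{m}$ should then become $-\fN^{m+1}\bigl(\stx^{\imp}_0+\sum_k\fN^k\source^{(k)}_{\imp}(0)\bigr)=\zeroVec$. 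If the statement is instead meant with the derivatives taken in the distributional sense, as the theorem's phrasing "Laplace transform of the $j$th derivative" suggests, then the rule above holds with no boundary terms. In that case the same reindexing absorbs the initial-value terms into assumption \eqref{eqn:ass:imp:source}, and I would state this convention explicitly.

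Combining the two blocks with the bounded factors $\|\fT\|$ and $\ee^{\gamma t}$ then gives \eqref{lemma2:statement}.
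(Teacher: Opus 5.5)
Your argument is correct and is essentially the paper's proof. It uses the same quasi-Weierstrass splitting and the same $|\lapVar|^{-1}$ resolvent bound for the slow block. For the fast block it uses the same expansion \eqref{eqn:imp:3}, whose boundary terms cancel through the consistency relation $\stx_{\imp}(0^{-})=-\sum_{i}\fN^i\source^{(i)}_{\imp}(0^{-})$; the paper invokes this relation only tacitly, via \eqref{eqn:sol:sub:fast}, and your bound through the constant $\|\fT\|$ avoids the paper's assumption that $\fS,\fT$ can be taken orthonormal, which is not available in general. Drop the distributional fallback, however: without boundary terms in the differentiation rule, the leftover polynomial $-\sum_{j}\lapVar^{j}\fN^{j+1}\stx^{\imp}_0$ involves only the initial value, so no hypothesis on $\source_{\imp}$ can absorb it, and it fails to vanish whenever $\fN\stx^{\imp}_0\neq\zeroVec$.
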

         \begin{proof}
             To prove the statement, due to the boundedness of the exponential over vertical lines in the complex plane, it is sufficient to show that \eqref{eqn:lap} goes to zero as ${|\lapVar|\rightarrow\infty}$. Consider \eqref{eqn:DAE}, where we take the change of variable $\tilde{\stx}\vcentcolon=\fT^{-1}\stx$ and multiply the \DAE from the left by $\fS$, assuming both $\fS$ and $\fT$ are orthonormal we get the following system
\begin{equation}
	\label{eqn:new:DAE}
	\left\{\quad \begin{aligned}
		\begin{bmatrix}
		    \Id_{\stateDim_{\fJ}}&\zeroMat\\
            \zeroMat&\fN
		\end{bmatrix} \dot{\tilde\stx}(t)  &= \begin{bmatrix}
		    \fJ&\zeroMat\\
            \zeroMat&\Id_{\stateDim_{\fN}}
		\end{bmatrix}\tilde \stx(t) +\fS \source(t),\\  
        \tilde\stx(t_0) &= \fT^{-1}\inSol
	\end{aligned}\right.
\end{equation} 
Now, taking the Laplace transform in \eqref{eqn:new:DAE} we find
\begin{equation*}
   \hat{\tilde\stx}(\lapVar)\;=\;\left(\begin{bmatrix}
        \left(\lapVar\Id_{\stateDim_{\fJ}}-\fJ\right)^{-1}&\zeroMat\\
        \zeroMat&\zeroMat 
    \end{bmatrix}+\begin{bmatrix}
        \zeroMat&\zeroMat\\
        \zeroMat&\left(\lapVar\fN-\fI_{\stateDim_{\fN}}\right)^{-1}
    \end{bmatrix}\right)\left(\begin{bmatrix}
		    \fI_{_{\stateDim_{\fJ}}}&\zeroMat\\
            \zeroMat&\fN
		\end{bmatrix}\tilde\stx(0^{-})+\fS\hat \source(\lapVar)\right)
\end{equation*}
and we immediately note that, by using the decoupling \eqref{eqn:DAE:slow}-\eqref{eqn:DAE:fast}
\begin{equation}\label{eqn:imp:5}
    \|\hat{\stx}(\lapVar)\|\;=\;\|\hat{\tilde \stx}(\lapVar)\|\;\le \;\|\hat{\stx}_{\diff}(\lapVar)\|+\|\hat{\stx}_{\imp}(\lapVar)\|,
\end{equation}
therefore, it suffices to independently evaluate the asymptotic behavior of the norm of the Laplace transform associated with the slow subsystem state and that of the fast subsystem state. Recalling the relation \eqref{eqn:ini:sol}, for $\hat{\stx}_{\diff}(\lapVar)$ we have
\begin{align}\label{eqn:dif:con}
    \begin{aligned}
        \lim_{|\lapVar|\rightarrow\infty}\|\hat{\stx}_{\diff}(\lapVar)\|\;=&\; \lim_{|\lapVar|\rightarrow\infty}\Bigg\|
        \left(\lapVar\fI_{\stateDim_{\fJ}}-\fJ\right)^{-1} \left(\begin{bmatrix} \fI_{\stateDim_{\fJ}}&\zeroVec
    \end{bmatrix}\fT^{-1}\stx(0^{-})+\hat \source_{\diff}(\lapVar)\right)\Bigg\|\;\\
    \lutc&\; \lim_{|\lapVar|\rightarrow\infty}\left( |\lapVar|^{-1}+|\lapVar|^{\alpha-1}\right)\rightarrow 0,
    \end{aligned}
\end{align}
where we used \eqref{eqn:ass:dif:source} and the fact that $\|(\lapVar\fI_{\stateDim_{\fJ}}-\fJ)^{-1}\|$ behaves asymptotically as the resolvent norm in the \ODE case. For $\|\hat{\stx}_{\imp}(\lapVar)\|$ we have
\begin{equation}\label{eqn:imp:1}
   \fN\hat{\stx}_{\imp}(\lapVar)\;=\;\frac{1}{\lapVar}\left(\fN{\stx}_{\imp}(0^{-})+\hat{\stx}_{\imp}(\lapVar)+\hat{\source}_{\imp}(\lapVar)\right),
\end{equation}
which gives, after multiplication by $\fN^{\indDAE-1}$, the expression
\begin{equation}\label{eqn:imp:2}
  \fN^{\indDAE-1}\hat{\stx}_{\imp}(\lapVar)+\fN^{\indDAE-1}\hat{\source}_{\imp}(\lapVar)\;=\;\zeroVec.
\end{equation}
Substituting $\indDAE-1$ times \eqref{eqn:imp:1} into \eqref{eqn:imp:2} we find
\begin{equation}\label{eqn:imp:3}
  \hat{\stx}_{\imp}(\lapVar)\;=\;-\sum_{j=0}^{\indDAE-1}\fN^{j}\lapVar^{j}\hat{\source}_{\imp}(\lapVar)-\sum_{j=1}^{\indDAE-1}\lapVar^{j-1}\fN^{j}\stx_{\imp}(0^{-}).
\end{equation}
Recalling \eqref{eqn:sol:sub:fast} and the properties of the Laplace transform for derivatives, i.e., 
\begin{equation}\label{eqn:imp:4}
    \stx_{\imp}(0^{-})\;=\;-\sum_{i=0}^{\indDAE-1} \fN^i\source^{(i)}_{{\imp}}(0^{-}),\quad \quad\lapVar^{j}\hat{\source}_{\imp}(\lapVar)\;=\;\hat{\source}^{(j)}_{\imp}(\lapVar)+\sum_{i=1}^{j}\lapVar^{j-i}\source_{\imp}^{(i-1)}(0^{-}),
\end{equation}
and substituting \eqref{eqn:imp:4} into \eqref{eqn:imp:3}, after a suitable reorder of the indexes, we obtain 
\begin{equation*}
    \hat{\stx}_{\imp}(\lapVar)\;=\;-\sum_{j=0}^{\indDAE-1}\fN^{j}\hat{\source}^{(j)}_{\imp}(\lapVar).
\end{equation*}
Finally, using assumption \eqref{eqn:ass:imp:source} 
\[
\lim_{|\lapVar|\rightarrow\infty}\|\hat{\stx}_{\imp}(\lapVar)\|\;=\;\lim_{|\lapVar|\rightarrow\infty}\Bigg\|\sum_{j=0}^{\indDAE-1}\fN^{j}\hat{\source}^{(j)}_{\imp}(\lapVar)\Bigg\|\;\lutc\; \lim_{|\lapVar|\rightarrow\infty}\left(\sum_{j=0}^{\indDAE-1}\|\fN^{j}\|\|\hat{\source}^{(j)}_{\imp}(\lapVar)\|\right)\;\rightarrow\;0;
\]
which, together with \eqref{eqn:imp:5} and \eqref{eqn:dif:con}, concludes the proof.

\end{proof}
\Cref{lemma2} shows that the condition imposed on the differential component of the source term coincides with the one required in the \ODE case. In contrast, for the impulsive subsystem, the assumption \eqref{eqn:ass:imp:source} is automatically fulfilled by any source term that is sufficiently regular in time, which aligns with the usual requirements for classical solutions implied by \eqref{eqn:sol:sub:fast}. Indeed, invoking the causality principle as in \cite[Sec.~2.3]{Col22}), one readily verifies that if $\source^{(j)}$ is continuous on $[0,T]$, then its Laplace transform decays for $|\lapVar|\rightarrow\infty$ as $\calO(|z|^{-1})$. Hence, \eqref{eqn:dec:ass} holds for every function that is $\indDAE-1$ times differentiable on the interval of interest $[0,T]$. Furthermore, functions with mild temporal singularities, such as $1/\sqrt{t-\alpha}$ for $\alpha\in[0,T]$, are also admissible since their Laplace transforms exhibit decay as well.


\section{Uniform bound for the parametric generalized resolvent 
via structured-unstructured perturbations}\label{sec:str-uns}

Consider the parametric dynamical system of the form
\begin{equation}
	\label{eqn:parametric:DAE}
	\left\{\quad \begin{aligned}
		\fE \dot{\stx}(t;\prmtr)  &= \fA(\prmtr)\stx(t;\prmtr) +\source(t),\\
        \stx(t_0) &= \inSol
	\end{aligned}\right.
\end{equation}
where $\prmtr \in \prmtrSet$, with $\prmtrSet$ a compact subset (possibly unknown) of $\R^{\prmtrDim}$, and $\fA:\prmtrSet\rightarrow\C^{\stateDim\times\stateDim}$ is a matrix-valued function depending on the parameter and having the structure
\begin{align}
\begin{aligned}\label{eqn:CS:par:mat}
        \fA(\prmtr)\;=\;\sum_{j=1}^{Q_{\fA}}\alpha_j(\prmtr)\fA_j,
\end{aligned}
\end{align}
for some analytic functions $\alpha_j:\prmtrSet\rightarrow\R$, constant matrices $\fA_j\in\R^{\stateDim\times\stateDim}$, and a positive integer $Q_{\fA}\ll\stateDim$. For clarity of exposition, we restrict attention here to the non-parametric case of $\fE$. The corresponding parametric setting will be discussed in subsequent subsections.

We are interested in approximating $\stx(t;\prmtr)$ for $t\in[T,\Lambda T]$, with $\Lambda>1$, by means of \CIM. In particular, since computing the contour $\Gamma$ can be computationally expensive, our goal is to determine the integration profile for only one (or a few) parameter value(s) $\prmtr_0\in\prmtrSet$, and then reuse the corresponding profile $\Gamma_{\prmtr_0}$ for all other choices of $\prmtr$. The ability to accurately integrate system \eqref{eqn:parametric:DAE} using only a single profile (or a small number of) is essential for the efficiency of \CIM in parametric settings, especially for its applications in projection-based \MOR; see \cite{GugM23}. There are two points that require attention when using the profile $\Gamma_{\prmtr_0}$ for parameter values different from $\prmtr_0$:
\begin{enumerate}
    \item the generalized eigenvalues of the matrix pair $(\fE,\fA(\prmtr))$ are different from those evaluated at $\prmtr_0$. To efficiently apply \CIM, \cref{ass:1} must be satisfied, which implies that $\Gamma_{\prmtr_0}$ has to enclose the eigenvalues of $(\fE,\fA(\prmtr))$ for all $\prmtr\in\prmtrSet$;\label{issue1}

    \item the magnitude of the resolvent $\|(\lapVar(\intvar)\fE-\fA(\prmtr))^{-1}\|$ is fundamental for bounding the approximation error of \CIM; see \cite{GugLN18}. In \cite{GugLM21}, the contour $\Gamma_{\prmtr_0}$ is chosen with the goal of ensuring that $\|(\lapVar(\intvar)\fE-\fA(\prmtr_0))^{-1}\|$ remains below a prescribed threshold for all $\lapVar\in\Gamma_{\prmtr_0}$. Nevertheless, the resolvent norm may vary significantly with the parameter $\prmtr$, in particular when the parameter induces strong non-normality in the operator or moves some eigenvalues towards $\Gamma_{\prmtr_0}$.\label{issue2}
\end{enumerate}
Both \cref{issue1} and \cref{issue2} must be resolved to enable a rigorous application of any \CIM method in the parametric framework.

As a starting point, in this work, we address \cref{issue2} by adopting the two-level viewpoint of structured matrix nearness problems, specialized here in the control of the generalized resolvent under structured perturbations. More precisely, the resolvent bound is first reformulated as a structured distance-to-singularity problem, and then solved numerically through a joint structured--unstructured optimization procedure, in the spirit of \cite[Alg.~8]{GL24}. This formulation is consistent with the joint structured--unstructured pseudospectrum and the structured $\varepsilon$-stability radius viewpoint developed in the book framework.

Suppose that for a certain parameter $\prmtr_0$ we construct the integration profile $\Gamma_{\prmtr_0}$ to apply the \CIM described in \cite{GugLM21}. Moreover, assume that this is done in such a way that, for a prescribed $\varepsilon$, we have
\[
\|(\lapVar(\intvar_j)\fE-\fA(\prmtr_0))^{-1}\|\le\frac{1}{\varepsilon}
\qquad \text{for all quadrature points } j=1,\ldots,N-1.
\]
Our goal here is the following: determine the set $\hat \prmtrSet\subseteq\prmtrSet$ such that, for all $j=1,\ldots,N-1$, it holds
\begin{equation*}
    \|(\lapVar(\intvar_j)\fE-\fA(\prmtr))^{-1}\|\;\le\;\frac{1}{\varepsilon},\quad\text{for all }\prmtr\in\hat\prmtrSet.
\end{equation*}
Controlling the resolvent magnitude uniformly over all the quadrature points and across all the parameters is essential to handle the numerical error; see the discussion in \cite[Sec.~3.5]{GugM23}. In the next subsections, we detail two approaches to determine $\hat\prmtrSet$. 

\subsection{A singular value optimization problem}\label{sec:problem}
Let $(\fE,\fA_0)$ be the matrix pencil associated with a given $\prmtr_0\in\prmtrSet$ such that
\[
\|(\lapVar \fE-\fA_0)^{-1}\|<\frac{1}{\varepsilon}
\]
for a prescribed $\varepsilon>0$. Due to the relation $\|\fM^{-1}\|=\sigma_{\min}(\fM)^{-1}$, the condition
$
\|(\lapVar\fE-\fA_0)^{-1}\|<{\varepsilon^{-1}}
$
is equivalent to $\sigma_{\min}(\lapVar\fE-\fA_0)>\varepsilon$. For $j=1,\ldots,Q_{\fA}$, we denote by $\cS_j$ the linear space
\begin{align}\label{eqn:manifold}
    \cS_{\fA_j}\;\vcentcolon=\;\left\{\fM\;\left|\,
    \fM=-g_j\fA_j,\ \text{with }g_j\in\R,\, \fA_j \text{ the $j$-st term in the sum \eqref{eqn:CS:par:mat}}
    \right.\right\}.
\end{align}
The problem we want to solve is the following: find the largest $\delta>0$ such that, for every $\fDelta_j\in\cS_{\fA_j}$, with $\|\fDelta_j\|_{\Frob}\le\delta$, and with $j=1,\ldots,Q_{\fA}$, we have
\begin{equation}\label{eqn:direct-sv}
    \sigma_{\min}\left(\lapVar\fE-\fA_0+\sum_{j=1}^{Q_{\fA}}\fDelta_j\right)\ge\varepsilon,
\end{equation}
where $\fDelta_j\in\cS_{\fA_j}$.
In most cases, this is equivalent to looking for the smallest $\delta>0$ such that equality is maintained in \eqref{eqn:direct-sv}. 

Let us define $\cS_{\fA}\vcentcolon=\Pi_{j=1}^{Q_{\fA}}\cS_{\fA_j}$ equipped with the norm
\begin{equation*}
\|\fDelta\|_{\cS_{\fA}}=\max_{j=1,\ldots, Q_{\fA}}\|\fDelta_j\|_{\Frob},
\end{equation*}
where $\fDelta\vcentcolon=[\fDelta_1,\ldots,\fDelta_{Q_{\fA}}]$ and $\fDelta_{j}\in\cS_{\fA_j}$. We say that $\delta$ is the $\cS_{\fA}$-structured $\varepsilon$-distance to singularity of $\lapVar\fE-\fA_0$.

For subsequent algorithmic development, it is convenient to use the structured–unstructured formulation with normalized matrices: find the smallest $\delta>0$ such that there exist $\tilde \fL\in\cS_{\fA}$ with $\tilde \fL=[\tilde \fL_1,\ldots,\tilde \fL_{Q_{\fA}}]$, $\tilde \fL_j\in\cS_{\fA_j}$, $\|\tilde \fL_j\|_{\Frob}=1$, and $\fL\in\fC^{\stateDim\times\stateDim}$ of rank one with $\|\fL\|_{\Frob}=1$, such that
\begin{equation}\label{eqn:joint-sing}
    \lapVar\fE-\fA_0+\delta\sum_{j=1}^{Q_{\fA}}\tilde \fL_j+\varepsilon\fL
\end{equation}
is singular. 
\begin{remark}
The equivalence between \eqref{eqn:direct-sv} and \eqref{eqn:joint-sing} follows from the Eckart--Young characterization of the distance to singularity in the Frobenius norm. In particular, the unstructured perturbation can be chosen to be of rank one.
\end{remark}

\begin{remark}
This formulation is consistent with the viewpoint of structured matrix nearness problems and structured $\varepsilon$-stability radii: the structured perturbation $\fDelta$ controls the admissible parameter variation, while the unstructured rank-one perturbation $\fP$ realizes the singularity threshold corresponding to the prescribed resolvent level $\varepsilon$.
\end{remark}

\begin{remark}[Extension to the case of $\fE$ being a parametric function]
    To formally deal with $\fE$ being a parametric matrix-valued function, i.e. $\fE:\prmtrSet\rightarrow\R^{\stateDim\times\stateDim}$, we also assume an affine dependent structure
    \begin{equation}\label{eqn:CS:par:mat:E}
        \fE(\prmtr)\;=\;\sum_{j=1}^{Q_{\fE}}\epsilon_j(\prmtr)\fE_j,
    \end{equation}
    with analytic functions $\epsilon_j:\prmtrSet\rightarrow\R$. Then we proceed by defining $\cS_{\fE_j}$, for $j=1,
    \ldots, Q_{\fE}$, as
     \begin{align}\label{eqn:manifold:E}
    \cS_{\fE_j}\;\vcentcolon=&\;\left\{\fM\;\left|\,
    \fM=g_j \fE_j,\  \text{with }g_j\in\R,\, \fE_j \text{ the $j$-st term in the sum \eqref{eqn:CS:par:mat:E}}   \right.\right\},
    \end{align}
    and the set $\cS_{\fE}$ as $\cS_{\fE}\vcentcolon=\Pi_{j=1}^{Q_{\fE}}\cS_{\fE_j}$. For $\fDelta=[\fDelta_1,\ldots,\fDelta_{Q_{\fE}}]\in\cS_{\fE}$ we define $\|\fDelta\|_{\cS_{\fE}}$ as 
    \begin{equation*}
        \|\fDelta\|_{\cS_{\fE}}\;\vcentcolon=\;\max_{j=1,\ldots,Q_{\fE}}\|\fDelta_j\|_{\Frob},\quad\fDelta_j\in\cS_{\fE_j}.
    \end{equation*}
    Let $(\fE_0,\fA_0)$ be the matrix pencil associated with a given $\prmtr_0\in\prmtrSet$ such that
    \[
    \|(\lapVar \fE_0-\fA_0)^{-1}\|<\frac{1}{\varepsilon}.
    \]
    Then, we define the following problem: find the largest $\delta>0$ such that for every $\fDelta_{\fA}\in\cS_{\fA}$ and $\fDelta_{\fE}\in\cS_{\fE}$ with $\|\fDelta_{\fA}\|_{\cS_{\fA}}\le\delta$ and $\|\fDelta_{\fE}\|_{\cS_{\fE}}\le\delta$, we have
    \begin{equation*}
    \sigma_{\min}\left(\lapVar\fE_0-\fA_0+\lapVar\sum_{j=1}^{Q_{\fE}}\fDelta_{\fE_j}+\sum_{j=1}^{Q_{\fA}}\fDelta_{\fA_{j}}\right)\;\ge\;\varepsilon.
    \end{equation*}
\end{remark}

\subsection{Alternative reformulation}

The previous formulation is the most natural one from the point of view of the structured distance to singularity. However, in some situations, the parameter dependence can be exploited more directly.

Let a fixed $\eps>0$ and $z \in \mathbb{C}$ be given. Suppose that one restricts the admissible parameter variation to a one-dimensional path issued from $\prmtr_0$, for instance, of the form $\prmtr=(1+\delta)\prmtr_0$ when this is meaningful in the parameter domain. Then, for varying $\delta$, one may introduce the functional
\begin{equation}\label{eqn:F-delta-alt}
F_{\varepsilon}(\fL,\delta) = \sigma_{\min}\!\left(\lapVar\fE-\sum_{i=1}^{Q_{\fA}}\alpha_i((1+\delta)\prmtr_0)\fA_i+ \eps \fL\right),
\end{equation}
for $\fL\in\C^{\stateDim\times\stateDim}$ of unit Frobenius norm. This leads to the classical two-level viewpoint:
\begin{itemize}
    \item \textbf{Inner iteration:} for a given $\delta>0$, one computes only the unstructured perturbation matrix $\fL(\delta)$, that is, computing
		\[
		\fL(\delta) =
      \arg\min\limits_{\|\fL\|_{\Frob}=1} 
      \sigma_{\min}\!\left(\lapVar\fE-\sum_{j=1}^{Q_{\fA}}\alpha_j((1+\delta)\prmtr_0)\fA_j+\eps\fL\right).
		\]
    \item \textbf{Outer iteration:} one computes the smallest positive value $\delta_{\varepsilon}$ such that
    \[
      \ophi(\delta_\eps)\;=\; 0, \qquad 		\ophi(\delta)\;\vcentcolon=\; \sigma_{\min}\!\left(\lapVar\fE-\sum_{j=1}^{Q_{\fA}}\alpha_j((1+\delta)\prmtr_0)\fA_j+\eps\fL(\delta)\right).
    \]
\end{itemize}

A further possibility is to parameterize the variation by an additive increment $\fdelta$ and consider $\prmtr=\prmtr_0+\fdelta$.
The corresponding outer problem is then finite-dimensional but nonlinear in the parameter increment, and it naturally suggests gradient-based methods with line-search or backtracking. We do not pursue this alternative approach further here but mention it as a possible complementary strategy.

\subsection{Two-level iteration}\label{sec:inn:out:alg}
Our numerical method for addressing the problem described in \Cref{sec:problem} is based on a two-level iterative algorithm, similar in spirit to \cite[Alg.~8]{GL24}.

Let a fixed $\eps>0$ be given. For variable $\delta>0$, we introduce the functional
\begin{equation}\label{F-delta}
F_{\varepsilon}(\tilde \fL,\fL,\delta)\;
=\;
\sigma_{\min}\left(\lapVar\fE-\fA_0+\delta \sum_{j=1}^{Q_{\fA}}\tilde \fL_j + \eps \fL\right),
\end{equation}
for $\tilde \fL=[\tilde \fL_1,\ldots,\tilde \fL_{Q_{\fA}}]\in \cS_{\fA}$, $\tilde \fL_j\in\cS_{\fA_j}$, and $\fL \in \C^{\stateDim,\stateDim}$, both of unit Frobenius norm. With this functional, we follow a two-level approach:
\begin{itemize}
\item {\bf Inner iteration:\/} For a given $\delta>0$, we aim to compute matrices $\tilde \fL(\delta) \in\cS_{\fA}$ and $\fL(\delta) \in \C^{\stateDim,\stateDim}$, both of unit Frobenius norm, that minimize $F_\varepsilon$:
\begin{equation} \label{L-delta}
(\tilde \fL(\delta),\fL(\delta))
\;=
\;\arg\min_{\substack{
\tilde\fL_j \in \cS_{\fA_j},\ \fL\in \C^{\stateDim,\stateDim} \\
\| \tilde \fL_j\|_\Frob=1,\ \| \fL \|_\Frob = 1 \\
j=1,\ldots,Q_{\fA}
}}
F_\varepsilon(\tilde \fL,\fL,\delta).
\end{equation}

\item {\bf Outer iteration:\/} We compute the smallest positive value $\delta_\eps$ such that
\begin{equation} \label{zero-delta}
\ophi(\delta_\eps)\;=\; 0,\qquad
\text{with}\qquad
\ophi(\delta)\;\vcentcolon=\;
F_\eps\bigl(\tilde \fL(\delta),\fL(\delta),\delta\bigr).
\end{equation}
\end{itemize}

Provided that these computations succeed, we obtain a structured perturbation
\[
\fDelta_\eps= \delta_\eps \sum_{j=1}^{Q_{\fA}}\tilde\fL_j(\delta_\eps) 
\]
such that
\[
\sigma_{\min}(\lapVar\fE-\fA_0+\fDelta_\eps)=\eps,
\]
that is, $\delta_\eps$ is an approximation of the $\cS_{\fA}$-structured $\eps$-distance to singularity of $\lapVar\fE-\fA_0$. As in the general two-level framework, the computed quantity should be regarded as an upper bound when the inner iteration converges only to a local minimum.

\subsubsection{Matrix ODEs for the inner iteration} \label{subsec:r1-ode-eps-rad}

In this subsection, we detail a possible way to solve \eqref{L-delta}. Before doing so, let us recall a standard result on the derivative of simple singular values.

\begin{theorem}[see, for instance, Lemma 1 in \cite{GugLM21}]
\label{lem:eigderiv}
Consider a continuously differentiable path of square complex matrices $\fM(t)$ for $t$ in an open interval $I$. Let $\sigma(t)$, $t\in I$, be a continuous path of simple positive singular values of $\fM(t)$. Let $\fu(t)$ and $\fv(t)$ be the associated left and right singular vectors, respectively, i.e.,
\[
\fM(t)\fv(t)=\sigma(t)\fu(t),\qquad \fM(t)^*\fu(t)=\sigma(t)\fv(t).
\]
Then $\sigma$ is continuously differentiable on $I$ and its derivative is given by
\begin{equation*}
\dot{\sigma}(t)
=
\Re\left(\fu(t)^* \dot{\fM}(t) \fv(t)\right)
=
\Re\big\langle \fu(t)\fv(t)^*, \dot \fM(t) \big\rangle .
\end{equation*}
\end{theorem}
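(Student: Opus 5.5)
We prove the statement first at a fixed time where the singular triple is differentiable, and then argue that $\sigma$ is $C^1$. Fix $t_0\in I$ and write $\sigma_0=\sigma(t_0)>0$, $\fu_0=\fu(t_0)$, $\fv_0=\fv(t_0)$.

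The starting point is that $\sigma(t)^2$ is a simple positive eigenvalue of the Hermitian matrix $\fM(t)^*\fM(t)$, and $\fv(t)$ is a corresponding unit eigenvector. Since $\fM$ is $C^1$, the family $\fM^*\fM$ is $C^1$ and Hermitian. Standard analytic perturbation theory for simple eigenvalues applies here, either via the implicit function theorem on $\det(\lambda\fI-\fM^*\fM)$ or via the Riesz projector
\[
\frac{1}{2\pi\imagunit}\oint (\zeta\fI-\fM(t)^*\fM(t))^{-1}\,\mathrm{d}\zeta
\]
on a small circle isolating $\sigma_0^2$. It gives the following three facts near $t_0$:
\begin{itemize}
\item the eigenvalue $\lambda(t)=\sigma(t)^2$ is $C^1$;
\item the spectral projector is $C^1$;
\item one can choose a $C^1$ unit eigenvector $\fv(t)$ locally.
\end{itemize}
Continuity of the given path $\sigma(t)$ ensures it coincides with this isolated branch. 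Since $\sigma>0$, $\sigma=\sqrt{\lambda}$ is $C^1$. Setting $\fu(t)=\fM(t)\fv(t)/\sigma(t)$ then produces a $C^1$ left singular vector. The derivative formula involves only the rank-one matrix $\fu\fv^*$, which is invariant under the phase ambiguity $\fu,\fv\mapsto \ee^{\imagunit\theta}\fu,\ee^{\imagunit\theta}\fv$. So the formula does not depend on which admissible singular vectors are used.

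Next we differentiate the identity $\sigma(t)=\fu(t)^*\fM(t)\fv(t)$, which holds because $\fu^*\fM\fv=\sigma\fu^*\fu=\sigma$. This gives
\[
\dot\sigma=\dot\fu^*\fM\fv+\fu^*\dot\fM\fv+\fu^*\fM\dot\fv=\sigma\,\dot\fu^*\fu+\fu^*\dot\fM\fv+\sigma\,\fv^*\dot\fv,
\]
where we used $\fM\fv=\sigma\fu$ and $\fu^*\fM=\sigma\fv^*$. Differentiating the normalizations $\fu^*\fu=1$ and $\fv^*\fv=1$ gives $\Re(\fu^*\dot\fu)=0$ and $\Re(\fv^*\dot\fv)=0$. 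Since $\sigma$ is real, we take real parts of the identity. The terms $\sigma\Re(\dot\fu^*\fu)$ and $\sigma\Re(\fv^*\dot\fv)$ vanish, so
\[
\dot\sigma=\Re(\fu^*\dot\fM\fv).
\]
Finally, $\fu^*\dot\fM\fv=\mathrm{tr}\big((\fu\fv^*)^*\dot\fM\big)=\langle\fu\fv^*,\dot\fM\rangle$ by the definition of the Frobenius inner product, which yields the second form of the formula.

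Continuity of $\dot\sigma$ follows because the right-hand side depends continuously on $t$. Indeed, $\dot\fM$ is continuous, and $\fu\fv^*$ can be written via the spectral projector, since $\fv\fv^*$ is the projector, so $\fu\fv^*=\fM\fv\fv^*/\sigma$. This expression is continuous even if the vectors themselves are only chosen locally.

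The main obstacle is the regularity step: establishing differentiability of $\sigma$ and a differentiable choice of singular vectors from mere $C^1$ smoothness of $\fM$. Simplicity is essential here. I would rely on the Riesz projector argument for the Hermitian matrix $\fM^*\fM$, with positivity of $\sigma$ used to pass from $\sigma^2$ to $\sigma$. Once that is in place, the derivative computation is routine.
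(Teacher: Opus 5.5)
Your proof is correct and complete. The regularity step uses the simple eigenvalue $\sigma^2$ of the Hermitian $C^1$ family $\fM^*\fM$ (Riesz projector or implicit function theorem), with $\sigma=\sqrt{\lambda}$ because $\sigma>0$. Phase invariance of $\fu\fv^*$ then transfers the formula to arbitrary singular vectors. Finally, differentiating $\sigma=\fu^*\fM\fv$ and using $\Re(\fu^*\dot\fu)=\Re(\fv^*\dot\fv)=0$ gives the derivative. All of these steps are sound.

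The paper itself gives no proof and only cites Lemma~1 of \cite{GugLM21}, so your argument supplies the standard proof behind that citation. You rightly use $\|\fu(t)\|=\|\fv(t)\|=1$, which the statement leaves implicit; without it the formula would be off by the factor $\|\fv(t)\|^2$.
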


\noindent
The following result allows us to compute the steepest descent direction of the functional $F_\eps$.

\begin{lemma}[Free gradient]
\label{lem:gradient}
Let $\tilde \fL(t)=[\tilde \fL_1,\ldots,\tilde \fL_{Q_{\fA}}]\in \cS_{\fA}$, with $\tilde \fL_j\in\cS_{\fA_j}$ for $j=1,\ldots,Q_{\fA}$, and $\fL(t)\in \C^{\stateDim,\stateDim}$, for real $t$ near $t_0$, be continuously differentiable paths of matrices, with derivatives denoted by $\dot {\tilde \fL}(t)$ and $\dot \fL(t)$.
Assume that $\sigma(t)$ is a simple singular value of
\[
\lapVar\fE-\fA_0+\delta\sum_{j=1}^{Q_{\fA}}\tilde \fL_j(t)+\varepsilon\fL(t)
\]
that depends continuously on $t$, with associated left and right singular vectors $\fu(t)$ and $\fv(t)$. Then
\[
F_\varepsilon(\tilde \fL(t),\fL(t),\delta)= \sigma_{\min}\left(\lapVar\fE-\fA_0+\delta\sum_{j=1}^{Q_{\fA}}\tilde \fL_j(t)+\varepsilon\fL(t)\right)
\]
is continuously differentiable with respect to $t$ and
\begin{equation} \label{eq:deriv}
\frac{d}{dt} F_\eps(\tilde \fL(t),\fL(t),\delta)
=
\delta\sum_{j=1}^{Q_{\fA}}\Re \,\bigl\langle  \fG(t),  \dot {\tilde \fL}_j(t)\bigr\rangle
+
\varepsilon \Re \bigl\langle  \fG(t),\dot \fL(t) \bigr\rangle,
\end{equation}
where $\fG(t)$ is the rank-one matrix
\begin{equation} \label{eq:freegrad}
\fG(t) =  \fu(t) \fv^*(t) \in \C^{\stateDim,\stateDim}.
\end{equation}
\end{lemma}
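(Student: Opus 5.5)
The plan is to reduce the claim to \Cref{lem:eigderiv} through the chain rule. First I define the matrix path
\[
\fM(t)\;\vcentcolon=\;\lapVar\fE-\fA_0+\delta\sum_{j=1}^{Q_{\fA}}\tilde \fL_j(t)+\varepsilon\fL(t),
\]
where $\lapVar$, $\fE$, $\fA_0$, $\delta$ and $\varepsilon$ are all fixed. Because each $\tilde\fL_j(t)$ and $\fL(t)$ is continuously differentiable near $t_0$, the path $\fM(t)$ is continuously differentiable there. Its derivative is
\[
\dot\fM(t)=\delta\sum_{j=1}^{Q_{\fA}}\dot{\tilde\fL}_j(t)+\varepsilon\dot\fL(t).
\]
Each $\cS_{\fA_j}$ is a linear space, so the derivatives $\dot{\tilde\fL}_j(t)$ stay in $\cS_{\fA_j}$. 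This matters only for interpreting the gradient, not for the computation itself.

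By hypothesis, $\sigma(t)=F_\varepsilon(\tilde\fL(t),\fL(t),\delta)$ is a continuous path of simple singular values of $\fM(t)$, with singular vectors $\fu(t)$ and $\fv(t)$. I will also assume it is positive, which is the implicit requirement in \Cref{lem:eigderiv}; since we are minimizing towards $\varepsilon>0$ before the outer iteration reaches zero, this is natural. Under these conditions \Cref{lem:eigderiv} applies directly. It gives continuous differentiability of $\sigma$ and the formula
\[
\dot\sigma(t)=\Re\langle \fu(t)\fv(t)^*,\dot\fM(t)\rangle.
\]

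Next I substitute the expression for $\dot\fM(t)$. The inner product $\langle\fX,\fY\rangle=\mathrm{tr}(\fX^*\fY)$ is linear in its second argument. Taking the real part commutes with multiplication by the real scalars $\delta$ and $\varepsilon$. Together these give
\[
\dot\sigma(t)=\delta\sum_{j}\Re\langle\fG(t),\dot{\tilde\fL}_j(t)\rangle+\varepsilon\Re\langle\fG(t),\dot\fL(t)\rangle,
\]
with $\fG(t)=\fu(t)\fv(t)^*$. This is exactly \eqref{eq:deriv}.

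The only delicate point is the applicability of \Cref{lem:eigderiv}: the singular value must be simple and positive along the path. This is supplied by the assumptions, apart from positivity, which I would either state explicitly or justify by noting $\sigma>0$ away from the singular endpoint. Everything else is a routine chain-rule computation.
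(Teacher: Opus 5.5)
Your proposal is correct and follows the paper's proof essentially verbatim: apply \Cref{lem:eigderiv} to the path $\lapVar\fE-\fA_0+\delta\sum_{j}\tilde\fL_j(t)+\varepsilon\fL(t)$, rewrite $\Re(\fu^*\dot\fM\fv)$ as $\Re\langle \fu\fv^*,\dot\fM\rangle$, and split by linearity using the real scalars $\delta$ and $\varepsilon$. Your explicit flagging of positivity of the singular value is a fair refinement that the paper does not make; \Cref{lem:eigderiv} requires it, but the lemma statement leaves it implicit.
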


\begin{proof}
By \Cref{lem:eigderiv},  $F_\eps(\fL^{\cS}(t),\fL(t),\delta)$ is continuously differentiable, with
\begin{align*}
\begin{aligned}
\frac{ d }{dt} F_\eps(\tilde \fL(t),\fL(t),\delta)
\;=&\;
\dot \sigma_{\min}\left(\lapVar\fE-\fA_0+\delta\sum_{j=1}^{Q_{\fA}}\tilde \fL_j(t)+\varepsilon\fL(t)\right) \\
=&\;
\Re \left( \fu(t)^*\left(\delta\sum_{j=1}^{Q_{\fA}}\dot{\tilde \fL}_j(t)+ \varepsilon\dot{\fL}(t) \right)\fv(t)\right).
\end{aligned}
\end{align*}
Since
\[
\Re \left( \fu(t)^*\left(\delta\sum_{j=1}^{Q_{\fA}}\dot{\tilde \fL}_j(t)+ \varepsilon\dot{\fL}(t) \right) \fv(t) \right)
=
\delta\sum_{j=1}^{Q_{\fA}}\Re\, \bigl\langle \fu(t) \fv(t)^* ,\dot {\tilde \fL}_j(t)  \bigr\rangle
+
\varepsilon\Re\, \bigl\langle \fu(t) \fv(t)^* ,\dot \fL(t)  \bigr\rangle,
\]
we obtain \eqref{eq:deriv}--\eqref{eq:freegrad}.
\end{proof}

Our objective is now to use \eqref{eq:deriv} to minimize the functional $F_{\varepsilon}$. In the unconstrained setting, one would simply choose the descent directions equal to $-\fG(t)$. However, in our case, $\tilde \fL_j(t)$ and $\fL(t)$ must satisfy the constraints
\begin{equation}\label{eqn:con}
   \tilde \fL_j(t)\in\cS_{\fA_j},\; \|\tilde \fL_j(t)\|_{\Frob}=1,\; \text{for}\; j=1,\ldots,Q_{\fA};
   \quad\text{and}\quad
   \|\fL(t)\|_{\Frob}=1,
\end{equation}
which are generically not satisfied by $\fG(t)$. The next result provides the corresponding constrained gradient system.

\begin{lemma}(Constrained gradient flow)\label{lem:cons:grad:sys}
    Let $\tilde \fL=[\tilde \fL_1,\ldots,\tilde \fL_{Q_{\fA}}]\in\cS_{\fA}$, with $\tilde \fL_j\in\cS_{\fA_{j}}$ for $j=1,\ldots,Q_{\fA}$ where $\cS_{\fA_{j}}$ is defined in \eqref{eqn:manifold}. The solution $(\tilde \fL(t),\fL(t))$ of the optimization problem
    \begin{equation}\label{eqn:min:gra}
      \arg \min_{\fM(t)} \left(  \arg \min_{\tilde \fM(t)}\left( \delta\sum_{j=1}^{Q_{\fA}}\Re \,\bigl\langle  \fG(t),  \dot {\tilde \fM}_j(t)\bigr\rangle+\varepsilon \Re \bigl\langle  \fG(t),\dot \fM(t) \bigr\rangle\right)\right),
    \end{equation}
    with $\tilde \fL(t)$ and $\fL^{\cS}(t)$ satisfying \eqref{eqn:con} must satisfy the $Q_{\fA}+1$ matrix differential equations 
    \begin{subequations}\label{eqn:con:flow}
        \begin{align}
        \dot{\tilde\fL}_j(t) \;=&\; -\fPi^{\cS_{j}}\fG(t) + \Re \langle \,\fPi^{\cS_{j}}\fG(t), \tilde \fL_j(t) \,\rangle \tilde \fL_j(t),\quad j=1,\ldots,Q_{\fA}\,;\label{eqn:structured:flow}\\
            \dot \fL(t) \;=&\; -\fG(t) + \Re \langle\, \fG(t), \fL(t) \,\rangle \fL(t),\label{eqn:unstructured:flow}
        \end{align}
    \end{subequations}
    where $\fPi^{\cS_j}\fG(t)$ is the projection of $\fG(t)$ onto the set $\cS_{\fA_j}$ defined in \eqref{eqn:manifold}.
\end{lemma}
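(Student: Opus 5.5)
The plan is to treat \eqref{eqn:min:gra} as a pointwise-in-time linear minimization problem over the tangent directions. By the form of \eqref{eq:deriv}, the objective decouples: the $j$-th structured term involves only $\dot{\tilde\fM}_j$, and the unstructured term involves only $\dot\fM$. So I will solve $Q_{\fA}+1$ independent problems of the form ``minimize $\Re\langle \fG,\fZ\rangle$ over admissible $\fZ$'' and then justify the resulting ODEs.

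\textbf{Admissible sets.} The constraints \eqref{eqn:con} must be preserved along the flow. For $\fL$, differentiating $\|\fL(t)\|_\Frob^2=1$ gives $\Re\langle \fL,\dot\fL\rangle=0$. For $\tilde\fL_j$, since $\cS_{\fA_j}$ is a linear space, $\dot{\tilde\fL}_j\in\cS_{\fA_j}$, and in addition $\Re\langle\tilde\fL_j,\dot{\tilde\fL}_j\rangle=0$. Without a norm bound on the direction the infimum is $-\infty$, so, as is standard in these gradient-flow derivations, I will also impose a normalization $\|\dot\fM\|_\Frob=\|\dot{\tilde\fM}_j\|_\Frob=$ const. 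Equivalently, I will look for the direction of steepest descent, defined up to a positive scalar.

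\textbf{Unstructured problem.} Minimizing $\Re\langle\fG,\fZ\rangle$ over $\fZ$ with $\Re\langle\fL,\fZ\rangle=0$ and $\|\fZ\|_\Frob=1$ is a Cauchy--Schwarz argument on the real Hilbert space $\C^{n,n}$ with inner product $\Re\langle\cdot,\cdot\rangle$. The minimizer is the negative normalized orthogonal projection of $\fG$ onto the tangent space $\{\fZ:\Re\langle\fL,\fZ\rangle=0\}$, namely $-\bigl(\fG-\Re\langle\fG,\fL\rangle\fL\bigr)$ up to a positive factor. This follows because $\|\fL\|_\Frob=1$ makes $\fZ\mapsto\fZ-\Re\langle\fL,\fZ\rangle\fL$ the orthogonal projector, and this gives \eqref{eqn:unstructured:flow}.

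\textbf{Structured problem.} For $\fZ\in\cS_{\fA_j}$ I use $\Re\langle\fG,\fZ\rangle=\Re\langle\fPi^{\cS_j}\fG,\fZ\rangle$. This holds because $\fPi^{\cS_j}$ is the orthogonal projector, with respect to $\Re\langle\cdot,\cdot\rangle$, onto the real linear space $\cS_{\fA_j}$; explicitly $\fPi^{\cS_j}\fG=\Re\langle\fA_j,\fG\rangle\fA_j/\|\fA_j\|_\Frob^2$. The problem then reduces to the previous one inside $\cS_{\fA_j}$ with $\fG$ replaced by $\fPi^{\cS_j}\fG$, which yields \eqref{eqn:structured:flow}. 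I also need $\tilde\fL_j\in\cS_{\fA_j}$, so that the right-hand side stays in $\cS_{\fA_j}$; then the flow preserves both the structure and the unit norm. This is checked directly: $\frac{d}{dt}\|\tilde\fL_j\|_\Frob^2=2\Re\langle\tilde\fL_j,\dot{\tilde\fL}_j\rangle=0$ when $\|\tilde\fL_j\|_\Frob=1$.

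\textbf{Main obstacle.} The only delicate point is making the optimization problem well posed, that is, stating the normalization of the direction. A related subtlety is that $\cS_{\fA_j}$ is one-dimensional: the tangent space at a unit-norm $\tilde\fL_j$ is then $\{0\}$, and \eqref{eqn:structured:flow} gives $\dot{\tilde\fL}_j=0$ consistently. I would remark that the formula is nonetheless the correct general projected flow. Finally I would note monotonic decrease: substituting \eqref{eqn:con:flow} into \eqref{eq:deriv} gives
\[
\frac{d}{dt}F_\eps=-\delta\sum_{j}\bigl\|\fPi^{\cS_j}\fG-\Re\langle\fPi^{\cS_j}\fG,\tilde\fL_j\rangle\tilde\fL_j\bigr\|_\Frob^2-\eps\bigl\|\fG-\Re\langle\fG,\fL\rangle\fL\bigr\|_\Frob^2\le0 .
\]
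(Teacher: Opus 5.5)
Your proposal is correct and follows essentially the same route as the paper. You split the objective term by term, get the tangent condition $\Re\langle \fL,\dot\fL\rangle=0$ (plus $\dot{\tilde\fL}_j\in\cS_{\fA_j}$) by differentiating the norm constraint, and take the negative orthogonal projection of $\fG$, respectively $\fPi^{\cS_j}\fG$, onto that tangent space. Your explicit Cauchy--Schwarz argument with a normalized direction simply makes self-contained what the paper delegates to \cite[Lem.~2.3]{GL24}.

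Your extra observation is also correct, though the paper does not remark on it: since $\cS_{\fA_j}$ is one-dimensional, $\dot{\tilde\fL}_j=0$ for unit-norm $\tilde\fL_j$.
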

\begin{proof}
    First, let us observe that by deriving with respect to $t$ the condition $\|\fL(t)\|^2_{\Frob}=1$, we get
    \begin{equation}\label{eqn:costant:norm}
        0\;=\;\frac{d}{dt} \left(\|\fL(t)\|^2_{\Frob}\right)\;=\;\frac{d}{dt}\left( \langle\, \fL(t), \fL(t) \,\rangle\right)\;=\;2\Re \langle\, \dot\fL(t), \fL(t) \,\rangle, 
    \end{equation}
    thus implying that $\dot\fL(t)$ belongs to the set of matrices orthogonal to $\fL$. The same can be derived for $\tilde \fL_j(t)$ for $j=1,\ldots,Q_{\fA}$. Now, we observe that the minimum of problem \eqref{eqn:min:gra} is obtained by minimizing separately each term in the sum; therefore, we are left with solving
    \begin{subequations}
      \begin{align}
        \tilde \fL_j(t)\;\vcentcolon=& \;\arg\min_{\substack{\|\tilde \fM(t)\|_{\Frob}=1,\\\tilde \fM(t)\in\cS_{\fA_j}}}\Re \,\bigl\langle  \fG(t),  \dot{\tilde  \fM}(t)\bigr\rangle,\quad j=1,\ldots, Q_{\fA}\label{eqn:sub:eq:structured}\\
        \fL(t)\;\vcentcolon=& \;\arg\min_{\|\fM(t)\|_{\Frob}=1}\Re \,\bigl\langle  \fG(t),  \dot \fM(t)\bigr\rangle.\label{eqn:sub:eq:unstructured}
      \end{align}
    \end{subequations}
   We now focus on \eqref{eqn:sub:eq:unstructured}. The expression \eqref{eqn:unstructured:flow} is obtained directly from \eqref{eqn:sub:eq:unstructured} applying \cite[Lem.~2.3]{GL24}, which is based on the fact that the real part of the complex inner product on $\C^{\stateDim,\stateDim}$ coincides with the standard real inner product on $\R^{2\stateDim\times2\stateDim}$, or equivalently on $\R^{4\stateDim^2}$. Using the constant norm condition enforced by requiring $\dot \fM(t)$ to be orthogonal to $\fM(t)$, i.e., condition \eqref{eqn:costant:norm}, we see that \eqref{eqn:sub:eq:unstructured} is minimized by projecting the unconstrained steepest descent direction $-\fG(t)$ onto the tangent space of the manifold of matrices with a fixed Frobenius norm. This projection is given by
\[
\fPi \fG(t) \;=\; \fG(t) \;-\; \Re \langle \fG(t), \fL(t) \rangle\, \fL(t),
\]
which coincides with the right-hand side of \eqref{eqn:unstructured:flow}. Analogously, \eqref{eqn:structured:flow} follows directly from \eqref{eqn:sub:eq:structured} by repeating the same reasoning and noting that the constraint $\tilde \fL_j\in\cS_{\fA_j}$ additionally requires projecting $\fG(t)$ onto the subspace $\cS$.
\end{proof}

\begin{remark}[Extension to the case of $\fE$ being a parametric function]
   For $\cS_{\fE_j}$ and $\cS_{\fE_j}$ as defined in \eqref{eqn:manifold:E} and \eqref{eqn:manifold}, respectively, we now consider
    \begin{align*}
        \begin{aligned}
            \fL_{\fE}\;=&\;[\fL_{\fE_1},\ldots,\fL_{\fE_{Q_{\fE}}}]\in\cS_{\fE},\quad\fL_{\fE_j}\in\cS_{\fE_j}\\
            \fL_{\fA}\;=&\;[\fL_{\fA_1},\ldots,\fL_{\fA_{Q_{\fA}}}]\in\cS_{\fA},\quad\fL_{\fA_j}\in\cS_{\fA_j}.
        \end{aligned}
    \end{align*}
   Then, the functional to minimize becomes
    \begin{equation*}
         F_{\varepsilon}(\fL_{\fE},\fL_{\fA},\fL,\delta)
         =
         \sigma_{\min}\left(\lapVar\fE_0-\fA_0+\lapVar \delta\sum_{j=1}^{Q_{\fE}}\fL_{\fE_j}+\delta \sum_{j=1}^{Q_{\fA}}\fL_{\fA_j} + \eps \fL\right).
    \end{equation*}
   Following the same procedure as in \Cref{lem:gradient} and \ref{lem:cons:grad:sys}, this leads to a system of $Q_{\fE}+Q_{\fA}+1$ matrix \ODEs to be solved.
\end{remark}

\noindent
The following monotonicity property emerges naturally from the way gradient systems are constructed.

\begin{corollary}[Monotone decay of the functional]
\label{cor:monotone}
Let $\tilde \fL_j(t)$ and $\fL(t)$ satisfy the differential equations \eqref{eqn:con:flow}. Assume that
\[
\sigma_{\min}\left(\lapVar\fE-\fA_0+\delta\sum_{j=1}^{Q_{\fA}}\tilde \fL_j(t)+\eps\fL(t)\right)>0
\]
is a simple singular value that continuously depends on $t$. Consider $\tilde \fL=[\tilde 
\fL_1,\ldots,\tilde \fL_{Q_{\fA}}]$, then,
\begin{equation}
\frac{d}{dt} F_\eps (\tilde \fL(t),\fL(t),\delta)  \le  0.
\label{monotone}
\end{equation}
\end{corollary}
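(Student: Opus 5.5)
By \Cref{lem:gradient}, $F_\eps$ is differentiable along the flow. Its derivative is given by \eqref{eq:deriv} with $\fG(t)=\fu(t)\fv(t)^*$. I will insert the right-hand sides of \eqref{eqn:con:flow} into \eqref{eq:deriv} and show that each of the $Q_{\fA}+1$ terms is nonpositive. The argument is a Cauchy--Schwarz estimate on each term, combined with the unit-norm constraints \eqref{eqn:con}.

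\textbf{Unstructured term.} First I will check that the flow \eqref{eqn:unstructured:flow} preserves $\|\fL(t)\|_\Frob=1$. Differentiating as in \eqref{eqn:costant:norm} gives
\[
\tfrac{d}{dt}\|\fL\|_\Frob^2 = 2\Re\langle \fL,\dot\fL\rangle = 2\Re\langle \fG,\fL\rangle(\|\fL\|_\Frob^2-1),
\]
so the unit sphere is invariant. Then
\[
\Re\langle \fG,\dot\fL\rangle = -\|\fG\|_\Frob^2 + \bigl(\Re\langle \fG,\fL\rangle\bigr)^2 \le 0 ,
\]
because Cauchy--Schwarz and $\|\fL\|_\Frob=1$ give $|\Re\langle \fG,\fL\rangle|\le\|\fG\|_\Frob$. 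Multiplying by $\eps>0$ keeps the sign.

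\textbf{Structured terms.} $\cS_{\fA_j}$ is a real linear subspace and $\fPi^{\cS_j}$ is the orthogonal projection onto it with respect to $\Re\langle\cdot,\cdot\rangle$. Since $\dot{\tilde\fL}_j\in\cS_{\fA_j}$ (both summands in \eqref{eqn:structured:flow} lie in the subspace), I can write
\[
\Re\langle \fG,\dot{\tilde\fL}_j\rangle=\Re\langle \fPi^{\cS_j}\fG,\dot{\tilde\fL}_j\rangle .
\]
The same invariance computation as before, with $\fPi^{\cS_j}\fG$ in place of $\fG$, shows that $\|\tilde\fL_j\|_\Frob=1$ is preserved. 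Substituting \eqref{eqn:structured:flow} then yields
\[
\Re\langle \fG,\dot{\tilde\fL}_j\rangle=-\|\fPi^{\cS_j}\fG\|_\Frob^2+\bigl(\Re\langle \fPi^{\cS_j}\fG,\tilde\fL_j\rangle\bigr)^2\le 0,
\]
again by Cauchy--Schwarz. Multiplying by $\delta>0$ and summing over $j$ preserves the sign. Adding the unstructured term gives \eqref{monotone}.

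The only step requiring care is the projection identity for the structured terms. It needs $\fPi^{\cS_j}$ to be the orthogonal projector for the real inner product $\Re\langle\cdot,\cdot\rangle$, so that $\fG-\fPi^{\cS_j}\fG$ is $\Re$-orthogonal to $\cS_{\fA_j}$. For the one-dimensional real span of $\fA_j$, this projector is explicitly $\Re\langle \fG,\fA_j\rangle\fA_j/\|\fA_j\|_\Frob^2$. The simplicity of $\sigma_{\min}$ is used only to invoke \Cref{lem:gradient}.
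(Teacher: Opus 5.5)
Your proof is correct and follows essentially the same route as the paper: substitute the flows \eqref{eqn:con:flow} into \eqref{eq:deriv}, use that $\fG-\fPi^{\cS_j}\fG$ is orthogonal to $\dot{\tilde\fL}_j\in\cS_{\fA_j}$, and use the unit-norm constraint. The only difference is cosmetic. The paper writes each term as $-\|\dot\fL\|_\Frob^2$ (resp.\ $-\|\dot{\tilde\fL}_j\|_\Frob^2$), which under $\|\fL\|_\Frob=1$ equals your $-\|\fG\|_\Frob^2+(\Re\langle\fG,\fL\rangle)^2$; your Cauchy--Schwarz step and your explicit norm-invariance check simply spell out what the paper takes from \eqref{eqn:con}.
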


\begin{proof}
 Consider the inner product of \eqref{eqn:structured:flow} and \eqref{eqn:unstructured:flow} with $\dot {\tilde \fL}_j(t)$ and $\dot \fL(t)$, respectively. We get
 \begin{align}\label{eqn:positive:scalar:p}
 \begin{aligned}
       \|\dot{\tilde \fL}_j(t)\|^2_{\Frob}
       &=
       \Re \left\langle \dot {\tilde \fL}_j(t), -\fPi^{\cS_j}\fG(t) + \Re \langle \,\fPi^{\cS_j}\fG(t), \tilde \fL_j(t) \,\rangle\tilde  \fL_j(t) \right\rangle\\
       &=
       -\Re\langle \dot {\tilde \fL}_j(t) ,\fPi^{\cS_j}\fG(t)  \rangle,\qquad j=1,\ldots,Q_{\fA},\\[0.4em]
       \|\dot \fL(t)\|^2_{\Frob}
       &=
       \Re\left\langle \dot \fL(t), -\fG(t) + \Re \langle \,\fG(t), \fL(t) \,\rangle \fL(t) \right\rangle\\
       &=
       -\Re \langle \dot \fL(t) ,\fG(t)  \rangle,
    \end{aligned}
\end{align}
where we used the fact that both $\Re\langle \dot {\tilde \fL}_j(t) ,\tilde \fL_j(t)  \rangle$ and $\Re\langle \dot \fL(t) ,\fL(t)  \rangle$ vanish. Plugging \eqref{eqn:positive:scalar:p} into \eqref{eq:deriv}, we obtain
\begin{align}\label{eqn:negative:fun}
    \begin{aligned}
      \frac{d}{dt} F_\eps(\tilde \fL(t),\fL(t),\delta)
      &=
      \delta\sum_{j=1}^{Q_{\fA}}\Re \,\bigl\langle  \fG(t),  \dot{ \tilde{ \fL}}_j(t)\bigr\rangle+\varepsilon \Re \bigl\langle  \fG(t),\dot \fL(t) \bigr\rangle\\
      &=
      -\delta\sum_{j=1}^{Q_{\fA}}\|\dot{\tilde\fL}_j(t)\|^2_{\Frob}-\varepsilon\|\dot\fL(t)\|^2_{\Frob}\le0,
    \end{aligned}
\end{align}
where we used the fact that $\langle  \fG(t)-\fPi^{\cS_j}\fG(t),  \dot{\tilde \fL}_j(t)\rangle=0$ since $\dot {\tilde\fL}_j(t)\in\cS_{\fA_j}$ by \eqref{eqn:structured:flow}.
\end{proof}

\noindent
The stationary points of the differential equations \eqref{eqn:con:flow} are characterized as follows.

\begin{corollary}[Stationary points] 
\label{thm:stat}
Let $\fL^{\star}$ and $\tilde \fL^{\star}=[\tilde \fL^{\star}_{1},\ldots,\tilde \fL^{\star}_{Q_{\fA}}]\in\cS_{\fA}$ with $\tilde \fL^{\star}_{j}\in\cS_{\fA_j}$ for $j=1,\ldots,Q_{\fA}$, and $\| \fL^\star\|_{\Frob}=\| \tilde \fL^\star\|_{\Frob}=1$, be such that 
the singular value $\sigma_{\min}(\lapVar\fE-\fA_0+\delta\sum_{i=1}^{Q_{\fA}}\tilde \fL^{\star}_{j}+\varepsilon\fL^{\star})$ is simple and depends continuously on $\tilde \fL^{\star}$ and $\fL^{\star}$ in a neighborhood.
Let $\tilde \fL(t)=[\tilde \fL_1,\ldots,\tilde \fL_{Q_{\fA}}]\in \cS_{\fA}$ and $\fL(t)\in \C^{\stateDim,\stateDim}$ be, respectively, the solutions of \eqref{eqn:structured:flow} and \eqref{eqn:unstructured:flow} passing through $\tilde \fL^\star$ and $\fL^\star$. 
Then the following are equivalent:
\begin{enumerate}
    \item $\frac{ d }{dt} F_\eps \left(\tilde \fL(t), \fL(t),\delta \right)  = 0$.\label{item1}
    \item $\dot {\tilde \fL}_j(t) = 0$, for $j=1,\ldots,Q_{\fA}$, and $\dot \fL(t) = 0$.\label{item2}
    \item $\tilde{\fL}_j^\star$ is a real multiple of $\fPi^{\cS_j}\fG(t)$, for $j=1,\ldots,Q_{\fA}$, and $\fL^\star$ is a real multiple $\fG(t)$.\label{item3}
\end{enumerate}
\end{corollary}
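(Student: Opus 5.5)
The argument is a cyclic chain (\ref{item1})$\Rightarrow$(\ref{item2})$\Rightarrow$(\ref{item3})$\Rightarrow$(\ref{item1}). Throughout we use the identity established in the proof of \Cref{cor:monotone}, namely
\begin{equation*}
\frac{d}{dt} F_\eps(\tilde \fL(t),\fL(t),\delta)\;=\;-\delta\sum_{j=1}^{Q_{\fA}}\|\dot{\tilde\fL}_j(t)\|^2_{\Frob}-\varepsilon\|\dot\fL(t)\|^2_{\Frob}.
\end{equation*}
It holds at the point $(\tilde\fL^\star,\fL^\star)$ because the simplicity and continuity assumptions give us \Cref{lem:gradient} in a neighborhood.

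For (\ref{item1})$\Rightarrow$(\ref{item2}), the identity writes the derivative as a sum of nonpositive terms with strictly positive weights $\delta,\varepsilon>0$. If the derivative vanishes, every term vanishes, so $\dot{\tilde\fL}_j=0$ for all $j$ and $\dot\fL=0$.

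For (\ref{item2})$\Rightarrow$(\ref{item3}), set the right-hand sides of \eqref{eqn:con:flow} to zero. This gives $\fPi^{\cS_j}\fG=\Re\langle \fPi^{\cS_j}\fG,\tilde\fL_j^\star\rangle\tilde\fL_j^\star$ and $\fG=\Re\langle\fG,\fL^\star\rangle\fL^\star$, with real coefficients. To state the claim in the direction written, namely $\tilde\fL_j^\star$ a real multiple of $\fPi^{\cS_j}\fG$, we need these coefficients to be nonzero. For $\fL^\star$ this is immediate: $\fG=\fu\fv^*$ has unit Frobenius norm, so $\fG\neq 0$, which forces $\Re\langle\fG,\fL^\star\rangle\neq0$, and we can divide. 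For the structured part, $\fPi^{\cS_j}\fG$ may vanish. In that case we interpret (\ref{item3}) as ``$\tilde\fL_j^\star$ and $\fPi^{\cS_j}\fG$ are linearly dependent over $\R$'', or equivalently note that then $\dot{\tilde\fL}_j=0$ trivially. Since $\cS_{\fA_j}$ is one-dimensional, spanned by $\fA_j$, every element of it, including $\tilde\fL_j^\star$ and $\fPi^{\cS_j}\fG$, is a real multiple of $\fA_j$. So the structured part of (\ref{item3}) is in fact automatic in this rank-one-dimensional structure, up to the degenerate case $\fPi^{\cS_j}\fG=0$.

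For (\ref{item3})$\Rightarrow$(\ref{item1}), write $\fL^\star=\rho\fG$ with $\rho$ real, and substitute into \eqref{eqn:unstructured:flow}. Using $\|\fL^\star\|_\Frob=1$, we get $\Re\langle\fG,\fL^\star\rangle\fL^\star=\rho\|\fG\|^2_\Frob\rho\fG=\fG$, since $\rho^2\|\fG\|_\Frob^2=1$. Hence $\dot\fL=0$. The same computation with $\fPi^{\cS_j}\fG$ in place of $\fG$ gives $\dot{\tilde\fL}_j=0$. The displayed identity then yields (\ref{item1}).

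The main obstacle is the nondegeneracy issue in (\ref{item2})$\Rightarrow$(\ref{item3}), that is, the possible vanishing of $\fPi^{\cS_j}\fG$. I would handle it by the remark above, or by adding the standing assumption $\fPi^{\cS_j}\fG\neq0$, which is generically satisfied.
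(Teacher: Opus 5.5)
Your proof is correct and matches the paper's: it also gets (1)$\Leftrightarrow$(2) from the identity \eqref{eqn:negative:fun} and moves between (2) and (3) by setting the right-hand sides of \eqref{eqn:con:flow} to zero, only ordered as (3)$\Rightarrow$(2)$\Leftrightarrow$(1)$\Rightarrow$(3) rather than as your cycle. Two of your remarks are valid points the paper's proof does not address: in the degenerate case $\fPi^{\cS_j}\fG=0$, (2) holds while (3) literally fails for the structured part, and the structured flow is trivially stationary because each $\cS_{\fA_j}$ is one-dimensional.
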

\begin{proof} 
Clearly, if \cref{item3} holds, then \cref{item2} follows immediately from the right hand side of \eqref{eqn:con:flow} and the fact that $\tilde \fL^\star$ and $\fL^\star$ belong to the trajectory. \Cref{item2} implies \cref{item1} by \eqref{eqn:negative:fun} and vise versa. Finally, \cref{item1} implies \cref{item3} using \cref{item2} and \eqref{eqn:con:flow}.
\end{proof}

Every global minimum is, in particular, a local minimum, and by Corollary \ref{thm:stat} we can conclude that all local minima are stationary points of \eqref{eqn:con:flow}. Stationary points of the gradient flow that are not local minima are unstable. Consequently, one can generally expect a trajectory to converge to a local minimum. Moreover, Corollary \ref{thm:stat} shows that, under structured perturbations, the stationary points of the gradient system, and therefore the local minima of the functional, are precisely the projections onto $\cS_{\fA_j}$ of rank-one matrices, because $\fPi^{\cS_j}\fG(t)$ has this form; whereas for unstructured perturbations, the stationary points themselves are rank-one matrices; see \cref{item3}. This observation is crucial, as it motivates the search for a differential equation defined in the rank-1 matrix manifold that has the same stationary points but is computationally more efficient than evolving the dynamic in the space of complex square matrices of size $\stateDim$.

\subsubsection{Outer iteration: updating \texorpdfstring{$\tilde \delta$}{TEXT}}\label{subsec:outer:it}
For the solution of the scalar equation $\ophi(\delta)-\varepsilon=0$, we use the Newton method. We fix $\tilde \fL(\tilde \delta)$ and under the assumption that $\sigma_{\min}(\lapVar\fE-\fA_0+\delta\sum_{j=1}^{Q_{\fA}} \fL_j(\tilde \delta))$ is simple and greater than $0$, the derivative of $\ophi$ with respect to $\delta$ for the Newton iteration is given by the following formula:
\begin{equation}\label{eqn:der:phi}
    \ophi'(\delta)\;= \;\Re\left(\fu(\delta)^*\left(\sum_{j=1}^{Q_{\fA}}\tilde \fL_j(\tilde \delta)\right)\fv(\delta)\right)\;=\; \sum_{j=1}^{Q_{\fA}}\Re\left( \langle \fu(\delta)\fv(\delta)^* ,\tilde \fL_j(\tilde \delta) \rangle\right)
\end{equation}
where $\fu(\delta)$ and $\fv(\delta)$ are the left and right singular vectors associated with
$\sigma_{\min}(\lapVar\fE-\fA_0+\delta \sum_{j=1}^{Q_{\fA}}\tilde \fL_j(\tilde \delta))$. Therefore, recalling that $\tilde \delta$ is the current approximation of $\delta_{\varepsilon}$, its update during a single outer iteration step reads as
\begin{equation*}
    \delta\;=\;\tilde \delta-\frac{\sigma_{\min}\left(\lapVar\fE-\fA_0+\tilde \delta \sum_{j=1}^{Q_{\fA}}\tilde \fL_j(\tilde \delta)\right)-\varepsilon}{\sum_{j=1}^{Q_{\fA}}\Re\left(\fu(\tilde \delta)^*\tilde \fL_j(\tilde \delta)\fv(\tilde \delta)\right)},
\end{equation*}
assuming that $\sum_{j=1}^{Q_{\fA}}\Re\left(\fu(\tilde \delta)^*\tilde \fL_j(\tilde \delta)\fv(\tilde \delta)\right)\neq0$. Note that, due to Corollary \ref{cor:monotone}, we also have
\[\Re\left(\fu(\tilde \delta)^*\tilde \fL_j(\tilde \delta)\fv(\tilde \delta)\right)\le0,\quad\text{for}\quad j=1,\ldots,Q_{\fA}.\]
If the assumption of a simple smallest singular value does not hold, one can always resort to the bisection method to update $\tilde \delta$.
\begin{remark}[Extension to the case of $\fE$ being a parametric function]
    For this case we have
    \begin{equation}\label{eqn:sv}
    \sigma_{\min}\left(\lapVar\fE_0-\fA_0+\delta\lapVar\sum_{j=1}^{Q_{\fE}}\fL_{\fE_j}(\tilde \delta)+\delta\sum_{j=1}^{Q_{\fA}}\fL_{\fA_j}(\tilde \delta)\right),
    \end{equation}
    thus the application of Newton or bisection method follows straightforwardly by simply observing that, in the case of Newton method, \eqref{eqn:der:phi} is replaced by
\begin{equation*}
    \ophi'(\delta)\;= \;\Re\left(\fu(\delta)^*\left(\lapVar\sum_{j=1}^{Q_{\fE}}\fL_{\fE_j}(\tilde \delta)+\sum_{j=1}^{Q_{\fA}}\fL_{\fA_j}(\tilde \delta)\right)\fv(\delta)\right),
\end{equation*}
being $\fu(\delta)$, $\fv(\delta)$ the left and right singular vectors associated with \eqref{eqn:sv}.
\end{remark}

\subsection{Algorithm and computational aspects}\label{sec3:comp:aspects}

By combining the results from \Cref{subsec:r1-ode-eps-rad} and \Cref{subsec:outer:it}, we obtain \Cref{alg1}, which provides an upper bound of $\delta_{\varepsilon}$, that is, the $\cS_{\fA}$-structured $\varepsilon$-distance to singularity of $\lapVar\fE-\fA_0$. 

\begin{remark}
We note that the output of \Cref{alg1} is only guarantee to be an upper bound on $\delta_{\varepsilon}$. This is because the inner iterations, which rely on solving a gradient-based system, can in general ensure convergence only to local minima. Moreover, the convergence of the outer iterations, which are based on Newton or bisection methods, is also influenced by the choice of the initial point. Despite this, if one carefully applies the step-size control, the local convergence results are still useful in our framework for identifying a relevant set of parameters. Subsequently, for validation purposes, a posteriori error control can be employed to detect parameters that may have been incorrectly included.
\end{remark}

\begin{algorithm}[t]
\caption{$\cS_{\fA}$-structured $\varepsilon$-distance from singularity of $\lapVar\fE-\fA_0$}\label{alg1}
\label{alg:strcatured:eps:distance:singularity}
\begin{algorithmic}[1] 
\Require The matrices $\fE$, $\fA_0$, the desired unstructured distance from singularity $\varepsilon>0$, the structured set $\cS_j$ (see \eqref{eqn:manifold}) for $j=1,\ldots,Q_{\fA}$, the complex point of interest $\lapVar$, exit tolerance $\tol$, maximum iterations $k_{\max}$
\Ensure Upper bound for the $\cS_{\fA}$-structured $\varepsilon$-distance from singularity $\delta_{\varepsilon}$, the structured perturbation matrices $\tilde \fL_j(\delta_\varepsilon)$ for $j=1,\ldots,Q_{\fA}$

\State Set $\delta_0,\delta_1=0$, $\phi(\delta_0)=\infty$, $\phi(\delta_1)=\sigma_{\min}(\lapVar\fE-\fA_0)$ with $\fu_1$ and $\fv_1$ associated left and right singular vectors. Set $\fL(\delta_0)=-\fu_1\fv_1^{*}$, $\tilde \fL_j(\delta_0)=-\fPi^{\cS_j}\left(\fu_1\fv_1^{*}\right)$ for $j=1,\ldots,Q_{\fA}$
\For{$k = 1$ to $k_{\max}$}

 \State   Compute $\tilde \fL_j(\delta_k)$ for $j=1,\ldots,Q_{\fA}$, $\fL(\delta_k)$, $\phi(\delta_k)$ by integrating the constrained flow systems \eqref{eqn:con:flow} with initial datum $\tilde \fL_j(\delta_{k-1})$ and $\tilde \fL(\delta_{k-1})$. (This is the \textbf{inner iteration})

\If{$|\phi(\delta_k)-\phi(\delta_{k-1})|\le\tol$}
        \State go to line 10
    \EndIf
 
 \State Compute $\fu(\delta_k)$ and $\fv(\delta_k)$, left and right singular vectors associated to $\sigma_{\min}(\lapVar\fE-\fA_0+\delta_k \sum_{j=1}^{Q_{\fA}}\tilde \fL_j(\delta_k))$ 
 \State Compute \[\delta_{k+1}=\delta_k-\left(\sigma_{\min}\left(\lapVar\fE-\fA_0+\delta_k \sum_{j=1}^{Q_{\fA}}\tilde \fL_j(\delta_k)\right)-\varepsilon\right)\left(\sum_{j=1}^{Q_{\fA}}\Re\left(\fu(\delta_k)^*\tilde \fL_j(\delta_k)\fv(\delta_k)\right)\right)^{-1}\]

\EndFor
\State \Return $\delta_{\varepsilon}=\delta_{k}$, $\tilde \fL(\delta_{\eps})=[\tilde \fL_1(\delta_{\eps}),\ldots,\tilde \fL_{Q_{\fA}}(\delta_{\eps})]$
\end{algorithmic}
\end{algorithm}

The use of a standard Euler integrator for the numerical approximation of the stationary points of \eqref{eqn:con:flow}, equipped with a step-size control based on the monotonicity of the functional, usually provides good results. The use of more sophisticated integrators equipped with trust-region techniques (such as Armijo's rule) is discussed in \cite{GL24}. From a computational perspective, formulating the problem as a gradient system provides two key benefits. First, the unstructured perturbation matrix $\fL(t)$ has rank one, so its approximation can be computed by integrating its rank one factors; see, for instance, \cite[Lem.~3.4]{book}, instead of forming $\fL(t)$ explicitly; thus, one works with $\stateDim$-dimensional vectors rather than $\stateDim\times\stateDim$ full matrices. Second, the $Q_{\fA}$ structured perturbation matrices $\tilde \fL_j(t)$ are computed as the projection of a rank-one matrix onto the prescribed structure. When the structure is defined by a sparsity pattern, as in the problems we consider, this makes it possible to work only with sparse matrices. Consequently, integrating \eqref{eqn:structured:flow} requires neither storing full matrices nor performing $\mathcal O(\stateDim^2)$ floating-point operations.

We also note that an additional acceleration of the numerical integration of \eqref{eqn:con:flow} can be obtained by replacing the $\stateDim$-dimensional problem with a reduced one of dimension $\stateDimRed\ll\stateDim$, constructed via projection onto appropriately chosen subspaces; see, for example, \cite[Sec.~5]{ManMG26} for the case of the smallest singular value. This can be crucial for speeding up \Cref{alg1}, since time integration with step-size control typically requires solving a large number of singular value problems, which can become expensive, even with sparse matrices, as $\stateDim$ grows.

We conclude by discussing the various eigenvalue problem solutions that are required for the execution of \Cref{alg1}. Indeed, the step-size control for the \ODE numerical integration, as well as the evaluation of the derivative of the functional in the Newton method for the outer iterations, involves solving several spectral problems associated with the smallest singular value. Since these problems are expressed as sums of rank-one and sparse matrices, one can exploit iterative methods that require only matrix-vector products. In this way, assuming convergence of the iterative method is achieved within a number of iterations much smaller than $\stateDim$, the computational cost scales linearly with $\stateDim$.


\section{Numerical experiments} \label{sec:num}
In this section, we provide numerical experiments that substantiate both the theoretical findings and the proposed methodology. We first verify the \CIM for \DAE approach outlined in \Cref{sec:constr} using two benchmark \DAE examples: a constrained mass-spring-damper system and the Stokes problem. We then demonstrate the framework introduced in \Cref{sec:str-uns} through numerical results obtained from discretized parametric \PDEs.

All calculations were performed with \matlab~2024b on a MacBook Pro with an Apple M2 Pro processor and 16GB of RAM. 

\vspace{0.2cm}
\noindent\fbox{%
	\parbox{0.98\textwidth}{%
		The code and data used to generate the subsequent results are accessible via
		\begin{center}
			\url{https://doi.org/10.5281/zenodo.21264292}
		\end{center}
		under MIT Common License.
	}%
}\\[.2em]

\subsection{A \CIM for \DAE: examples of applications}  \label{sec:num:CIM:DAE}
The routine for determining the profile $\Gamma$ for the \CIM approximation is derived from \cite{GugLM21}, where the standard \ODE case was treated. In essence, given target accuracy $\tol$ the method constructs a contour to approximate the solution with that accuracy and it also provides an estimate of the number of quadrature points necessary to reach the given precision.
\subsubsection{A \CIM for the constrained mass-spring-damper system} \label{sec:num:mass:spring}

We consider the holonomically constrained mass-spring-damper system presented in \cite[Sec.~4]{MehS05}. The vibration of this system is described by the descriptor system
\begin{align}\label{eq34}
	\left\{\quad
	\begin{aligned}
		\dot{\fp}(t) &= \fv(t),\\ 
		\fM\dot{\fv}(t) &=\fK\fp(t)+\fD\fv(t)-\fG^\T{\lambda}(t)+\fB_2\inp(t),\\
		\zeroVec &= \fG\fp(t),
	\end{aligned}\right.
\end{align}%
where $\fp(t) \in \R^g$ is the position vector, $\fv(t) \in \R^g$ is the velocity vector, $\flambda(t)\in \R$ is the Lagrange multiplier, $\fM = \text{diag}(m_1,\ldots,m_g)$ is the mass matrix, $\fD$ and $\fK$ are the tridiagonal damping and stiffness matrices, $\fG = [1, 0, \ldots,0, -1]\in \R^g$ is the constraint matrix, $\fB_2 = \fe_1$, where $\fe_j$ denotes the $j$th column of the identity matrix $\fI_{g}$. The descriptor system arising from \eqref{eq34} is of index $\indDAE=3$ and its associated matrices are
\begin{align}
	\label{eq:ref:con:mas:spr}
	\fE &\vcentcolon= \begin{bmatrix}
		\fI_g & \zeroMat & \zeroMat \\
		\zeroMat & \fM & \zeroMat \\
		\zeroMat & \zeroMat & \zeroMat
	\end{bmatrix}, &
	\fA &\vcentcolon= \begin{bmatrix}
		\zeroMat & \fI_g & \zeroMat \\
		\fK& \fD & -\fG^\T\\
		\fG & \zeroMat & \zeroMat \\
	\end{bmatrix}, &
	\fB &\vcentcolon= \begin{bmatrix}
		\zeroVec \\
		\fB_2\\
		\zeroVec \\
	\end{bmatrix} .
\end{align}
Considering $g$ masses, the state is given by $\stx(t)\vcentcolon=[\fp(t)^{\T},\fv(t)^{\T},\lambda(t)]^{\T}$ and thus the system dimension is $\stateDim=2g+1$, while input and output dimensions are $\inpDim=1$ and $\outDim=3$. The specific setting of the parameters is taken from \cite[Sec.~4]{MehS05}. Note that the matrices in \eqref{eq:ref:con:mas:spr} are sparse and the kernel of $\fE$ is of dimension one. 
\begin{figure}[t]
	\centering{
	\subfigure[$T=100$ and $\inp(t)=t^2$]{\label{subfig:1a}
%
\tikzexternaldisable
\begin{tikzpicture}

\begin{axis}[%
	width=0.64*\imageWidth,
	height=\imageHeight,
	scale only axis,
	scaled ticks=false,
	grid=both,
	grid style={line width=.1pt, draw=gray!10},
	major grid style={line width=.2pt,draw=gray!50},
	axis lines*=left,
	axis line style={line width=\lineWidth},
xmin=1,
xmax=45,
xlabel style={font=\color{white!15!black}},
xlabel={N},
ymode=log,
ymin=1e-12,
ymax=1e5,
yminorticks=true,
ylabel style={font=\color{white!15!black}},
ylabel={$\|\stx(T)-\stx_N{T}\|$},
	axis background/.style={fill=white},
	legend style={%
		legend cell align=left, 
		align=left, 
		font=\tiny,
		draw=white!15!black,
		at={(1.0,1.0)},
		anchor=north east,},
]


\addplot [color=mycolor1, line width=\lineWidth, forget plot]
table [x index=0, y index=1, col sep=comma]{img/DataCSV/Fig1a_tol1.000000e-10.csv};

\addplot [color=mycolor1, dotted, line width=\lineWidth]
table [x index=0, y index=2, col sep=comma]{img/DataCSV/Fig1a_tol1.000000e-10.csv};

\addlegendentry{$\tol=10^{-10}$}

\pgfplotstableread[col sep=comma]{img/DataCSV/Fig1a_QP.csv}\datatable

\pgfplotstablegetelem{3}{x1}\of{\datatable}
\let\xcoord\pgfplotsretval

\pgfplotstablegetelem{3}{y1}\of{\datatable}
\let\ycoord\pgfplotsretval

\addplot [color=mycolor1, line width=\lineWidth, only marks, mark=o, mark options={solid, mycolor1}, forget plot]
 coordinates {(\xcoord,\ycoord)};


\addplot [color=mycolor3, line width=\lineWidth, forget plot]
table [x index=0, y index=1, col sep=comma]{img/DataCSV/Fig1a_tol1.000000e-07.csv};

\addplot [color=mycolor3, densely dotted, line width=\lineWidth]
table [x index=0, y index=2, col sep=comma]{img/DataCSV/Fig1a_tol1.000000e-07.csv};

\addlegendentry{$\tol=10^{-7}$}

\pgfplotstableread[col sep=comma]{img/DataCSV/Fig1a_QP.csv}\datatable

\pgfplotstablegetelem{2}{x1}\of{\datatable}
\let\xcoord\pgfplotsretval

\pgfplotstablegetelem{2}{y1}\of{\datatable}
\let\ycoord\pgfplotsretval

\addplot [color=mycolor3, line width=\lineWidth, only marks, mark=o, mark options={solid, mycolor3}, forget plot]
  coordinates {(\xcoord,\ycoord)};


\addplot [color=mycolor4, line width=\lineWidth, forget plot]
table [x index=0, y index=1, col sep=comma]{img/DataCSV/Fig1a_tol1.000000e-04.csv};

\addplot [color=mycolor4, loosely dashdotted, line width=\lineWidth]
table [x index=0, y index=2, col sep=comma]{img/DataCSV/Fig1a_tol1.000000e-04.csv};

\addlegendentry{$\tol=10^{-4}$}

\pgfplotstableread[col sep=comma]{img/DataCSV/Fig1a_QP.csv}\datatable

\pgfplotstablegetelem{1}{x1}\of{\datatable}
\let\xcoord\pgfplotsretval

\pgfplotstablegetelem{1}{y1}\of{\datatable}
\let\ycoord\pgfplotsretval

\addplot [color=mycolor4, line width=\lineWidth, only marks, mark=o, mark options={solid, mycolor4}, forget plot]
  coordinates {(\xcoord,\ycoord)};


\addplot [color=mycolor2, line width=\lineWidth, forget plot]
table [x index=0, y index=1, col sep=comma]{img/DataCSV/Fig1a_tol1.000000e-01.csv};

\addplot [color=mycolor2, densely dashdotted, line width=\lineWidth]
table [x index=0, y index=2, col sep=comma]{img/DataCSV/Fig1a_tol1.000000e-01.csv};

\addlegendentry{$\tol=10^{-1}$}

\pgfplotstableread[col sep=comma]{img/DataCSV/Fig1a_QP.csv}\datatable

\pgfplotstablegetelem{0}{x1}\of{\datatable}
\let\xcoord\pgfplotsretval

\pgfplotstablegetelem{0}{y1}\of{\datatable}
\let\ycoord\pgfplotsretval

\addplot [color=mycolor2, line width=\lineWidth, only marks, mark=o, mark options={solid, mycolor2}, forget plot]
  coordinates {(\xcoord,\ycoord)};

\end{axis}
\end{tikzpicture}%
}
	\subfigure[$T=100$ and $\inp(t)=10\ee^{-t}+1$]{\label{subfig:1b}
%
\tikzexternaldisable
\begin{tikzpicture}

\begin{axis}[%
width=0.64*\imageWidth,
	height=\imageHeight,
	scale only axis,
	scaled ticks=false,
	grid=both,
	grid style={line width=.1pt, draw=gray!10},
	major grid style={line width=.2pt,draw=gray!50},
	axis lines*=left,
	axis line style={line width=\lineWidth},
xmin=1,
xmax=45,
xlabel style={font=\color{white!15!black}},
xlabel={N},
ymode=log,
ymin=1e-12,
ymax=1e3,
yminorticks=true,
ylabel style={font=\color{white!15!black}},
	axis background/.style={fill=white},
	legend style={%
		legend cell align=left, 
		align=left, 
		font=\tiny,
		draw=white!15!black,
		at={(1.0,1.0)},
		anchor=north east,},
]


\addplot [color=mycolor1, line width=\lineWidth, forget plot]
table [x index=0, y index=1, col sep=comma]{img/DataCSV/Fig1b_tol1.000000e-10.csv};

\addplot [color=mycolor1, dotted, line width=\lineWidth]
table [x index=0, y index=2, col sep=comma]{img/DataCSV/Fig1b_tol1.000000e-10.csv};


\pgfplotstableread[col sep=comma]{img/DataCSV/Fig1b_QP.csv}\datatable

\pgfplotstablegetelem{3}{x1}\of{\datatable}
\let\xcoord\pgfplotsretval

\pgfplotstablegetelem{3}{y1}\of{\datatable}
\let\ycoord\pgfplotsretval

\addplot [color=mycolor1, line width=\lineWidth, only marks, mark=o, mark options={solid, mycolor1}, forget plot]
 coordinates {(\xcoord,\ycoord)};


\addplot [color=mycolor3, line width=\lineWidth, forget plot]
table [x index=0, y index=1, col sep=comma]{img/DataCSV/Fig1b_tol1.000000e-07.csv};

\addplot [color=mycolor3, densely dotted, line width=\lineWidth]
table [x index=0, y index=2, col sep=comma]{img/DataCSV/Fig1b_tol1.000000e-07.csv};


\pgfplotstableread[col sep=comma]{img/DataCSV/Fig1b_QP.csv}\datatable

\pgfplotstablegetelem{2}{x1}\of{\datatable}
\let\xcoord\pgfplotsretval

\pgfplotstablegetelem{2}{y1}\of{\datatable}
\let\ycoord\pgfplotsretval

\addplot [color=mycolor3, line width=\lineWidth, only marks, mark=o, mark options={solid, mycolor3}, forget plot]
  coordinates {(\xcoord,\ycoord)};


\addplot [color=mycolor4, line width=\lineWidth, forget plot]
table [x index=0, y index=1, col sep=comma]{img/DataCSV/Fig1b_tol1.000000e-04.csv};

\addplot [color=mycolor4, loosely dashdotted, line width=\lineWidth]
table [x index=0, y index=2, col sep=comma]{img/DataCSV/Fig1b_tol1.000000e-04.csv};


\pgfplotstableread[col sep=comma]{img/DataCSV/Fig1b_QP.csv}\datatable

\pgfplotstablegetelem{1}{x1}\of{\datatable}
\let\xcoord\pgfplotsretval

\pgfplotstablegetelem{1}{y1}\of{\datatable}
\let\ycoord\pgfplotsretval

\addplot [color=mycolor4, line width=\lineWidth, only marks, mark=o, mark options={solid, mycolor4}, forget plot]
  coordinates {(\xcoord,\ycoord)};


\addplot [color=mycolor2, line width=\lineWidth, forget plot]
table [x index=0, y index=1, col sep=comma]{img/DataCSV/Fig1b_tol1.000000e-01.csv};

\addplot [color=mycolor2, densely dashdotted, line width=\lineWidth]
table [x index=0, y index=2, col sep=comma]{img/DataCSV/Fig1b_tol1.000000e-01.csv};


\pgfplotstableread[col sep=comma]{img/DataCSV/Fig1b_QP.csv}\datatable

\pgfplotstablegetelem{0}{x1}\of{\datatable}
\let\xcoord\pgfplotsretval

\pgfplotstablegetelem{0}{y1}\of{\datatable}
\let\ycoord\pgfplotsretval

\addplot [color=mycolor2, line width=\lineWidth, only marks, mark=o, mark options={solid, mycolor2}, forget plot]
  coordinates {(\xcoord,\ycoord)};
\end{axis}
\end{tikzpicture}%
}
 	\subfigure[$T=10$ and $\tol=10^{-8}$]{\label{subfig:1c}
		\tikzexternaldisable
\begin{tikzpicture}

\begin{axis}[%
	width=0.64*\imageWidth,
	height=\imageHeight,
scale only axis,
	scaled ticks=false,
	grid=both,
	grid style={line width=.1pt, draw=gray!10},
	major grid style={line width=.2pt,draw=gray!50},
	axis lines*=left,
	axis line style={line width=\lineWidth},
xmode=log,
xmin=50,
xmax=2e5,
xlabel={$\stateDim$},
xminorticks=true,
ymode=log,
ymin=0.005,
ymax=1e5,
yminorticks=true,
axis background/.style={fill=white},
	legend style={%
		legend cell align=left, 
		align=left, 
		font=\scriptsize,
		draw=white!15!black,
		at={(0.71,1)},
		anchor=north east,},
]
\addplot [color=mycolor1, line width=\lineWidth, mark=o, mark options={solid, mycolor1}]
table [x=x1, y=y1, col sep=comma]{img/DataCSV/Fig1c.csv};
\addlegendentry{$\texttt{ODE15s}$}

\addplot [color=mycolor2, line width=\lineWidth, mark=square, mark options={solid, mycolor2}]
table [x=x1, y=y5, col sep=comma]{img/DataCSV/Fig1c.csv};
\addlegendentry{Time \QR}

\addplot [color=mycolor1, dashed, line width=\lineWidth, mark=o, mark options={solid, mycolor1}]
table [x=x1, y=y2, col sep=comma]{img/DataCSV/Fig1c.csv};
\addlegendentry{Time dec.}

\addplot [color=mycolor2, dashed, line width=\lineWidth, mark=square, mark options={solid, mycolor2}]
table [x=x1, y=y4, col sep=comma]{img/DataCSV/Fig1c.csv};
\addlegendentry{Time $\Gamma$}

\addplot [color=mycolor4, dashed, line width=\lineWidth]
table [x=x1, y=y7, col sep=comma]{img/DataCSV/Fig1c.csv};
\addlegendentry{$\mathcal{O}(\stateDim)$}

\end{axis}
\end{tikzpicture}%
}
 }
	\caption{Constrained mass-spring-damper system. Decay of the quadrature error for different values of the target accuracy $\tol$ (left and center). Computational time with respect to dimension of the problem (right).}
	\label{fig1}%
\end{figure}%

We set $T=100$ and $g=5000$; then we consider the approximation of $\stx(T)$ for different values of the target precision $\tol$. The reference solution is obtained in two steps: we first decouple the differential and impulsive components by computing the decoupling matrices $\fS$ and $\fT$; next, we integrate the resulting differential system \eqref{eqn:DAE:slow} with high accuracy by employing the built-in \matlab~routine \texttt{ODE15s}, and then evaluate the impulsive part \eqref{eqn:sol:sub:fast} at time $T$. Finally, we reconstruct $\stx(T)$ via the relation $\stx(T)=\fT(\stx^{\diff}(T)\oplus\stx^{\imp}(T))$.

\begin{remark}
    Instead of separating the differential and impulsive components—which would require the computation of $\fS$ and $\fT$, one may alternatively differentiate the constraint equation $\indDAE$ multiple times until a \ODE system is obtained. This operation can be carried out analytically and, in the case of mass–spring–damper systems, also efficiently, since it still results in the integration of a \ODE system with sparse matrices. However, the subsequent time integration by time-stepping becomes more delicate and may even turn unstable; see \cite{BreCP95}, which is consistent with our own numerical observations. Therefore, when computing reference solutions, we always employ the decoupling strategy.
\end{remark}
The results are shown in \Cref{subfig:1a} and \Cref{subfig:1b}, corresponding to the input functions $\inp(t)=t^2$ and $\inp(t)=10\ee^{-t}+1$, respectively. The error consistently remains below the prescribed tolerance $\tol$, and the estimated number of quadrature points required to achieve this integration accuracy is indicated by a circle on the corresponding error curve. We observe that the Laplace transforms of these two inputs are given by $\hat{\inp}(\lapVar)=2/z^3$ and $\hat{\inp}(\lapVar)=10/(z+1)+1/z$, respectively. Since $\indDAE=3$, \Cref{lemma2} requires that the source term exhibits asymptotic decay in the Laplace transform of its first two derivatives, a condition that is satisfied by both input functions under consideration. 

Finally, we compare the computation times, as a function of the dimension of the problem $\stateDim$, required to approximate $\stx(T)$ at the final time $T=10$ with precision $\tol=10^{-8}$. The comparison is conducted between the decoupling approach, for which we report the computational time as \emph{Time dec.}, combined with time integration via \texttt{ODE15s}, and the \CIM applied to the \DAE. With this objective, we decompose the overall computational time of the \CIM into two distinct contributions: the time required to construct the integration profile $\Gamma$, denoted as Time $\Gamma$, and the time associated with the evaluation of the quadrature rule \eqref{eqn:quad:app}, denoted as Time \QR. The latter is predominantly determined by the solution of the $N/2$ linear systems arising from the quadrature formulation. We emphasize that these linear systems are mutually independent and can therefore be solved in parallel. From \Cref{subfig:1c} we observe that, for the \CIM applied to \DAE, the overall computational cost is mainly dominated by the construction of the integration profile $\Gamma$, while the solution of the linear systems involved in the quadrature rule \eqref{eqn:quad:app} is several orders of magnitude faster. The decoupling procedure is initially less expensive than the construction of $\Gamma$; however, as the dimension of the problem increases, its cost increases and eventually becomes prohibitive, which is why we do not report results for larger values of $\stateDim$. A comparison between the running time of \texttt{ODE15s} and that of the quadrature rule further shows that solving a small number of sparse linear systems is more efficient than time-stepping–based integration, since the latter requires computing the full trajectory over the entire time interval $[0,T]$.

It is important to emphasize that neither the decoupling routine nor the construction of $\Gamma$ has been optimized in the present work. The algorithm used to construct $\Gamma$ has substantial potential for improvement and can generally be tailored to the specific class of problems under consideration. For example, in the constrained mass-spring-damper system, the integration profile remained unchanged at different values of $\stateDim$. This observation suggests that a practical strategy may consist of designing the integration contour for relatively small values of $\stateDim$ and subsequently reusing it for substantially larger dimensions. The decoupling phase can also be accelerated when sparse representations of $\fS$ and $\fT$ are available, thereby avoiding the explicit formation of the decoupled system. The main point illustrated by this plot is that \CIM for \DAE provides a numerically stable tool (if $\Gamma$ is carefully constructed) to approximate the solution of a \DAE at a prescribed time (or over a suitable time window) without the need to decouple the \DAE system or to engage in index reduction techniques and carefully tuned, stable time-stepping integrators. The main drawback is the computational effort required to determine an appropriate placement of the contour $\Gamma$ and the fact that the full time trajectory is not directly obtained. Nevertheless, this approach can be more flexible and convenient in situations where decoupling or index reduction is prohibitively expensive and only a portion of the trajectory in time is of interest.

\begin{figure}[t]
	\centering{
	\subfigure[$T=100$, $\inp(t)=\sin(t)$]{
%
\tikzexternaldisable
\begin{tikzpicture}

\begin{axis}[%
	width=0.64*\imageWidth,
	height=\imageHeight,
	scale only axis,
	scaled ticks=false,
	grid=both,
	grid style={line width=.1pt, draw=gray!10},
	major grid style={line width=.2pt,draw=gray!50},
	axis lines*=left,
	axis line style={line width=\lineWidth},
xmin=1,
xmax=950,
xlabel style={font=\color{white!15!black}},
xlabel={N},
ymode=log,
ymin=6e-11,
ymax=63860.480072029,
yminorticks=true,
ylabel style={font=\color{white!15!black}},
ylabel={$\|\stx(T)-\stx_N{T}\|$},
	axis background/.style={fill=white},
	legend style={%
		legend cell align=left, 
		align=left, 
		font=\tiny,
		draw=white!15!black,
		at={(1.00,1.00)},
		anchor=north east,},
]


\addplot [color=mycolor1, line width=\lineWidth, forget plot]
table [x index=0, y index=1, col sep=comma]{img/DataCSV/Fig1a_bis_tol1.000000e-08.csv};

\addplot [color=mycolor1, dotted, line width=\lineWidth]
table [row sep=crcr]{
1 1e-8 \\
950 1e-8 \\
};

\addlegendentry{$\tol=10^{-8}$}

\pgfplotstableread[col sep=comma]{img/DataCSV/Fig1a_bis_QP.csv}\datatable

\pgfplotstablegetelem{3}{x1}\of{\datatable}
\let\xcoord\pgfplotsretval

\pgfplotstablegetelem{3}{y1}\of{\datatable}
\let\ycoord\pgfplotsretval

\addplot [color=mycolor1, line width=\lineWidth, only marks, mark=o, mark options={solid, mycolor1}, forget plot]
 coordinates {(\xcoord,\ycoord)};


\addplot [color=mycolor3, line width=\lineWidth, forget plot]
table [x index=0, y index=1, col sep=comma]{img/DataCSV/Fig1a_bis_tol1.000000e-06.csv};

\addplot [color=mycolor3, densely dotted, line width=\lineWidth]
table [row sep=crcr]{
1 1e-6 \\
950 1e-6 \\
};

\addlegendentry{$\tol=10^{-6}$}

\pgfplotstableread[col sep=comma]{img/DataCSV/Fig1a_bis_QP.csv}\datatable

\pgfplotstablegetelem{2}{x1}\of{\datatable}
\let\xcoord\pgfplotsretval

\pgfplotstablegetelem{2}{y1}\of{\datatable}
\let\ycoord\pgfplotsretval

\addplot [color=mycolor3, line width=\lineWidth, only marks, mark=o, mark options={solid, mycolor3}, forget plot]
  coordinates {(\xcoord,\ycoord)};


\addplot [color=mycolor4, line width=\lineWidth, forget plot]
table [x index=0, y index=1, col sep=comma]{img/DataCSV/Fig1a_bis_tol1.000000e-04.csv};

\addplot [color=mycolor4, loosely dashdotted, line width=\lineWidth]
table [row sep=crcr]{
1 1e-4 \\
950 1e-4 \\
};

\addlegendentry{$\tol=10^{-4}$}

\pgfplotstableread[col sep=comma]{img/DataCSV/Fig1a_bis_QP.csv}\datatable

\pgfplotstablegetelem{1}{x1}\of{\datatable}
\let\xcoord\pgfplotsretval

\pgfplotstablegetelem{1}{y1}\of{\datatable}
\let\ycoord\pgfplotsretval

\addplot [color=mycolor4, line width=\lineWidth, only marks, mark=o, mark options={solid, mycolor4}, forget plot]
  coordinates {(\xcoord,\ycoord)};


\addplot [color=mycolor2, line width=\lineWidth, forget plot]
table [x index=0, y index=1, col sep=comma]{img/DataCSV/Fig1a_bis_tol1.000000e-02.csv};

\addplot [color=mycolor2, densely dashdotted, line width=\lineWidth]
table [row sep=crcr]{
1 1e-2 \\
950 1e-2 \\
};

\addlegendentry{$\tol=10^{-2}$}

\pgfplotstableread[col sep=comma]{img/DataCSV/Fig1a_bis_QP.csv}\datatable

\pgfplotstablegetelem{0}{x1}\of{\datatable}
\let\xcoord\pgfplotsretval

\pgfplotstablegetelem{0}{y1}\of{\datatable}
\let\ycoord\pgfplotsretval

\addplot [color=mycolor2, line width=\lineWidth, only marks, mark=o, mark options={solid, mycolor2}, forget plot]
  coordinates {(\xcoord,\ycoord)};

\end{axis}
\end{tikzpicture}%
\label{subfig:1bis:a}}
	\subfigure[$T=100$, $\inp(t)=\sin(10\,t)$]{
%
\tikzexternaldisable
\begin{tikzpicture}

\begin{axis}[%
width=0.64*\imageWidth,
	height=\imageHeight,
	scale only axis,
	scaled ticks=false,
	grid=both,
	grid style={line width=.1pt, draw=gray!10},
	major grid style={line width=.2pt,draw=gray!50},
	axis lines*=left,
	axis line style={line width=\lineWidth},
xmin=1,
xmax=1944,
xlabel style={font=\color{white!15!black}},
xlabel={N},
ymode=log,
ymin=8.75422047243395e-12,
ymax=4e4,
yminorticks=true,
ylabel style={font=\color{white!15!black}},
	axis background/.style={fill=white},
	legend style={%
		legend cell align=left, 
		align=left, 
		font=\tiny,
		draw=white!15!black,
		at={(1.00,1.00)},
		anchor=north east,},
]


\addplot [color=mycolor1, line width=\lineWidth, forget plot]
table [x index=0, y index=1, col sep=comma]{img/DataCSV/Fig1b_bis_tol1.000000e-08.csv};

\addplot [color=mycolor1, dotted, line width=\lineWidth]
table [row sep=crcr]{
1 1e-8 \\
1944 1e-8 \\
};


\pgfplotstableread[col sep=comma]{img/DataCSV/Fig1b_bis_QP.csv}\datatable

\pgfplotstablegetelem{3}{x1}\of{\datatable}
\let\xcoord\pgfplotsretval

\pgfplotstablegetelem{3}{y1}\of{\datatable}
\let\ycoord\pgfplotsretval

\addplot [color=mycolor1, line width=\lineWidth, only marks, mark=o, mark options={solid, mycolor1}, forget plot]
 coordinates {(\xcoord,\ycoord)};


\addplot [color=mycolor3, line width=\lineWidth, forget plot]
table [x index=0, y index=1, col sep=comma]{img/DataCSV/Fig1b_bis_tol1.000000e-06.csv};

\addplot [color=mycolor3, densely dotted, line width=\lineWidth]
table [row sep=crcr]{
1 1e-6 \\
1944 1e-6 \\
};


\pgfplotstableread[col sep=comma]{img/DataCSV/Fig1b_bis_QP.csv}\datatable

\pgfplotstablegetelem{2}{x1}\of{\datatable}
\let\xcoord\pgfplotsretval

\pgfplotstablegetelem{2}{y1}\of{\datatable}
\let\ycoord\pgfplotsretval

\addplot [color=mycolor3, line width=\lineWidth, only marks, mark=o, mark options={solid, mycolor3}, forget plot]
  coordinates {(\xcoord,\ycoord)};


\addplot [color=mycolor4, line width=\lineWidth, forget plot]
table [x index=0, y index=1, col sep=comma]{img/DataCSV/Fig1b_bis_tol1.000000e-04.csv};

\addplot [color=mycolor4, loosely dashdotted, line width=\lineWidth]
table [row sep=crcr]{
1 1e-4 \\
1944 1e-4 \\
};


\pgfplotstableread[col sep=comma]{img/DataCSV/Fig1b_bis_QP.csv}\datatable

\pgfplotstablegetelem{1}{x1}\of{\datatable}
\let\xcoord\pgfplotsretval

\pgfplotstablegetelem{1}{y1}\of{\datatable}
\let\ycoord\pgfplotsretval

\addplot [color=mycolor4, line width=\lineWidth, only marks, mark=o, mark options={solid, mycolor4}, forget plot]
  coordinates {(\xcoord,\ycoord)};


\addplot [color=mycolor2, line width=\lineWidth, forget plot]
table [x index=0, y index=1, col sep=comma]{img/DataCSV/Fig1b_bis_tol1.000000e-02.csv};

\addplot [color=mycolor2, densely dashdotted, line width=\lineWidth]
table [row sep=crcr]{
1 1e-2 \\
1944 1e-2 \\
};


\pgfplotstableread[col sep=comma]{img/DataCSV/Fig1b_bis_QP.csv}\datatable

\pgfplotstablegetelem{0}{x1}\of{\datatable}
\let\xcoord\pgfplotsretval

\pgfplotstablegetelem{0}{y1}\of{\datatable}
\let\ycoord\pgfplotsretval

\addplot [color=mycolor2, line width=\lineWidth, only marks, mark=o, mark options={solid, mycolor2}, forget plot]
  coordinates {(\xcoord,\ycoord)};

\end{axis}
\end{tikzpicture}%
\label{subfig:1bis:b}}
    \subfigure[Integration profile for $T=100$, $\inp(t)=\sin(t)$ and $\inp(t)=\sin(10\,t)$]{
%
\tikzexternaldisable
\begin{tikzpicture}

\begin{axis}[%
width=0.64*\imageWidth,
	height=\imageHeight,
	scale only axis,
	scaled ticks=false,
	grid=both,
	grid style={line width=.1pt, draw=gray!10},
	major grid style={line width=.2pt,draw=gray!50},
	axis lines*=left,
	axis line style={line width=\lineWidth},
xmin=-0.2,
xmax=0.15,
xlabel style={font=\color{white!15!black}},
xlabel={$\Re(\lapVar)$},
ymin=-20,
ymax=27,
ylabel style={font=\color{white!15!black}},
ylabel={$ \imagunit\Im(\lapVar)$},
	axis background/.style={fill=white},
	legend style={%
		legend cell align=left, 
		align=left, 
		font=\tiny,
		draw=white!15!black,
		at={(1.00,0.98)},
		anchor=north east,},
]
\addplot [color=mycolor1, dashed, line width=\lineWidth, mark=none, mark options={solid, mycolor1}]
table [x index=0, y index=1, col sep=comma]{img/DataCSV/Fig1c_bis_1.csv};

\addlegendentry{$\Gamma$ for $\inp(t)=\sin(t)$}

\addplot [color=mycolor2, line width=\lineWidth, mark options={solid, mycolor2}]
table [x index=0, y index=1, col sep=comma]{img/DataCSV/Fig1c_bis_10.csv};

\addlegendentry{$\Gamma$ for $\inp(t)=\sin(10\,t)$}

\end{axis}
\end{tikzpicture}%
\label{subfig:1bis:c}}
 }
	\caption{Constrained mass-spring-damper system. Decay of the quadrature error for different values of the target accuracy $\tol$ (left and center). Truncated integration profile (right).}
	\label{fig2:MSD}%
\end{figure}%
The final numerical test for this example examines the performance of the method when applied to oscillatory and periodic input functions. For this experiment, we set $T = 100$ and $g = 500$, and consider two test cases:
\[
    \inp(t) = \sin(t) \quad \text{and} \quad \inp(t) = \sin(10\,t).
\]
\noindent Recall that the Laplace transform of $\sin(\alpha \,t)$, with $\alpha \in \mathbb{R}$, possesses poles at $\pm i\alpha$. Consequently, the integration contour $\Gamma$ must be constructed so that these poles lie to its right; see \Cref{subfig:1bis:c}. As $\alpha$ increases, the integration profile must enclose a larger region of the complex plane, and a direct consequence of this is that a larger number of quadrature points is required to achieve a prescribed tolerance~$\tol$. This behavior is confirmed by the results in \Cref{subfig:1bis:a,subfig:1bis:b}. Also, compared to the previously considered input functions, which lacked poles on the imaginary axis, we observe that a substantially larger number of quadrature points is needed. Nevertheless, we emphasize two aspects:
\begin{enumerate}
    \item the evaluation of the associated linear systems can be performed in parallel, mitigating the computational cost;
    
    \item for problems involving oscillatory input functions, more sophisticated quadrature rules could be employed to reduce the number of linear system evaluations. Also, since such oscillatory functions are analytic, their regularity can be exploited to accelerate the decay of the quadrature error with respect to the number of quadrature points, following the idea exploited in~\cite{HorG24}.
\end{enumerate}

\subsubsection{A \CIM for the Stokes problem} \label{subsec:num:Stokes}
    The instationary Stokes equations describe the flow of fluids at very low velocities without convection and coincide with the linearization of the Navier-Stokes equations around the zero-state. After a semi-discretization in space (see \cite{morwiki_stokes} which is based on \cite{Sch07}), we obtain the differential-algebraic system
\begin{align}\label{eq42}
	\left\{\quad
	\begin{aligned}
		\dot{\fv}(t) &= \fA_{11}\fv(t) + \fA_{12}\frho(t) + \fB_1\inp(t),\\
		\zeroVec &= \fA^\T_{12}\fv(t) + \fB_2\inp(t)
	\end{aligned}\right.
\end{align}
where $\fv(t)\in\R^{\stateDim_{\fv}}$ and $\frho(t)\in\R^{\stateDim_{\frho}}$ are the semidiscretized vectors of velocities and pressures, respectively. The \DAE~\eqref{eq42} has index two and the dimension $\stateDim = \stateDim_{\fv} + \stateDim_{\frho}$ of the system \eqref{eq42} depends on the fineness of the discretization and is usually large. The representation of system \eqref{eq42} in the form \eqref{eqn:DAE} reads as
\begin{align}
	\label{eqn:ref:mat:ST}
	\fE &= \begin{bmatrix}
		\fI & \zeroMat \\
		\zeroMat & \zeroMat
	\end{bmatrix}, &
	\fA &= \begin{bmatrix}
		\fA_{11} & \fA_{12} \\
		\fA_{12}^\T & \zeroMat
	\end{bmatrix}, &
	\source(t) &= \begin{bmatrix}
		\fB_{1} \\
		\fB_{2}
	\end{bmatrix}\inp(t).
\end{align}
\begin{figure}[t]
	\centering{
	\subfigure[$T=10$, $\inp(t)=t^2$]{
%
\tikzexternaldisable
\begin{tikzpicture}

\begin{axis}[%
	width=0.64*\imageWidth,
	height=\imageHeight,
	scale only axis,
	scaled ticks=false,
	grid=both,
	grid style={line width=.1pt, draw=gray!10},
	major grid style={line width=.2pt,draw=gray!50},
	axis lines*=left,
	axis line style={line width=\lineWidth},
xmin=1,
xmax=50,
xlabel style={font=\color{white!15!black}},
xlabel={N},
ymode=log,
ymin=1.28412403997057e-12,
ymax=63860.480072029,
yminorticks=true,
ylabel style={font=\color{white!15!black}},
ylabel={$\|\stx(T)-\stx_N{T}\|$},
	axis background/.style={fill=white},
	legend style={%
		legend cell align=left, 
		align=left, 
		font=\tiny,
		draw=white!15!black,
		at={(1.00,1.00)},
		anchor=north east,},
]


\addplot [color=mycolor1, line width=\lineWidth, forget plot]
table [x index=0, y index=1, col sep=comma]{img/DataCSV/Fig2a_tol1.000000e-10.csv};

\addplot [color=mycolor1, dotted, line width=\lineWidth]
table [x index=0, y index=2, col sep=comma]{img/DataCSV/Fig2a_tol1.000000e-10.csv};

\addlegendentry{$\tol=10^{-10}$}

\pgfplotstableread[col sep=comma]{img/DataCSV/Fig2a_QP.csv}\datatable

\pgfplotstablegetelem{3}{x1}\of{\datatable}
\let\xcoord\pgfplotsretval

\pgfplotstablegetelem{3}{y1}\of{\datatable}
\let\ycoord\pgfplotsretval

\addplot [color=mycolor1, line width=\lineWidth, only marks, mark=o, mark options={solid, mycolor1}, forget plot]
 coordinates {(\xcoord,\ycoord)};


\addplot [color=mycolor3, line width=\lineWidth, forget plot]
table [x index=0, y index=1, col sep=comma]{img/DataCSV/Fig2a_tol1.000000e-07.csv};

\addplot [color=mycolor3, densely dotted, line width=\lineWidth]
table [x index=0, y index=2, col sep=comma]{img/DataCSV/Fig2a_tol1.000000e-07.csv};

\addlegendentry{$\tol=10^{-7}$}

\pgfplotstableread[col sep=comma]{img/DataCSV/Fig2a_QP.csv}\datatable

\pgfplotstablegetelem{2}{x1}\of{\datatable}
\let\xcoord\pgfplotsretval

\pgfplotstablegetelem{2}{y1}\of{\datatable}
\let\ycoord\pgfplotsretval

\addplot [color=mycolor3, line width=\lineWidth, only marks, mark=o, mark options={solid, mycolor3}, forget plot]
  coordinates {(\xcoord,\ycoord)};


\addplot [color=mycolor4, line width=\lineWidth, forget plot]
table [x index=0, y index=1, col sep=comma]{img/DataCSV/Fig2a_tol1.000000e-04.csv};

\addplot [color=mycolor4, loosely dashdotted, line width=\lineWidth]
table [x index=0, y index=2, col sep=comma]{img/DataCSV/Fig2a_tol1.000000e-04.csv};

\addlegendentry{$\tol=10^{-4}$}

\pgfplotstableread[col sep=comma]{img/DataCSV/Fig2a_QP.csv}\datatable

\pgfplotstablegetelem{1}{x1}\of{\datatable}
\let\xcoord\pgfplotsretval

\pgfplotstablegetelem{1}{y1}\of{\datatable}
\let\ycoord\pgfplotsretval

\addplot [color=mycolor4, line width=\lineWidth, only marks, mark=o, mark options={solid, mycolor4}, forget plot]
  coordinates {(\xcoord,\ycoord)};


\addplot [color=mycolor2, line width=\lineWidth, forget plot]
table [x index=0, y index=1, col sep=comma]{img/DataCSV/Fig2a_tol1.000000e-01.csv};

\addplot [color=mycolor2, densely dashdotted, line width=\lineWidth]
table [x index=0, y index=2, col sep=comma]{img/DataCSV/Fig2a_tol1.000000e-01.csv};

\addlegendentry{$\tol=10^{-1}$}

\pgfplotstableread[col sep=comma]{img/DataCSV/Fig2a_QP.csv}\datatable

\pgfplotstablegetelem{0}{x1}\of{\datatable}
\let\xcoord\pgfplotsretval

\pgfplotstablegetelem{0}{y1}\of{\datatable}
\let\ycoord\pgfplotsretval

\addplot [color=mycolor2, line width=\lineWidth, only marks, mark=o, mark options={solid, mycolor2}, forget plot]
  coordinates {(\xcoord,\ycoord)};

\end{axis}
\end{tikzpicture}%
\label{subfig:2a}}
	\subfigure[$T=100$, $\inp(t)=10\ee^{-t}+1$]{
%
\tikzexternaldisable
\begin{tikzpicture}

\begin{axis}[%
width=0.64*\imageWidth,
	height=\imageHeight,
	scale only axis,
	scaled ticks=false,
	grid=both,
	grid style={line width=.1pt, draw=gray!10},
	major grid style={line width=.2pt,draw=gray!50},
	axis lines*=left,
	axis line style={line width=\lineWidth},
xmin=1,
xmax=70,
xlabel style={font=\color{white!15!black}},
xlabel={N},
ymode=log,
ymin=8.75422047243395e-13,
ymax=4e4,
yminorticks=true,
ylabel style={font=\color{white!15!black}},
	axis background/.style={fill=white},
	legend style={%
		legend cell align=left, 
		align=left, 
		font=\tiny,
		draw=white!15!black,
		at={(1.00,1.00)},
		anchor=north east,},
]


\addplot [color=mycolor1, line width=\lineWidth, forget plot]
table [x index=0, y index=1, col sep=comma]{img/DataCSV/Fig2b_tol1.000000e-10.csv};

\addplot [color=mycolor1, dotted, line width=\lineWidth]
table [x index=0, y index=2, col sep=comma]{img/DataCSV/Fig2b_tol1.000000e-10.csv};


\pgfplotstableread[col sep=comma]{img/DataCSV/Fig2b_QP.csv}\datatable

\pgfplotstablegetelem{3}{x1}\of{\datatable}
\let\xcoord\pgfplotsretval

\pgfplotstablegetelem{3}{y1}\of{\datatable}
\let\ycoord\pgfplotsretval

\addplot [color=mycolor1, line width=\lineWidth, only marks, mark=o, mark options={solid, mycolor1}, forget plot]
 coordinates {(\xcoord,\ycoord)};


\addplot [color=mycolor3, line width=\lineWidth, forget plot]
table [x index=0, y index=1, col sep=comma]{img/DataCSV/Fig2b_tol1.000000e-07.csv};

\addplot [color=mycolor3, densely dotted, line width=\lineWidth]
table [x index=0, y index=2, col sep=comma]{img/DataCSV/Fig2b_tol1.000000e-07.csv};


\pgfplotstableread[col sep=comma]{img/DataCSV/Fig2b_QP.csv}\datatable

\pgfplotstablegetelem{2}{x1}\of{\datatable}
\let\xcoord\pgfplotsretval

\pgfplotstablegetelem{2}{y1}\of{\datatable}
\let\ycoord\pgfplotsretval

\addplot [color=mycolor3, line width=\lineWidth, only marks, mark=o, mark options={solid, mycolor3}, forget plot]
  coordinates {(\xcoord,\ycoord)};


\addplot [color=mycolor4, line width=\lineWidth, forget plot]
table [x index=0, y index=1, col sep=comma]{img/DataCSV/Fig2b_tol1.000000e-04.csv};

\addplot [color=mycolor4, loosely dashdotted, line width=\lineWidth]
table [x index=0, y index=2, col sep=comma]{img/DataCSV/Fig2b_tol1.000000e-04.csv};


\pgfplotstableread[col sep=comma]{img/DataCSV/Fig2b_QP.csv}\datatable

\pgfplotstablegetelem{1}{x1}\of{\datatable}
\let\xcoord\pgfplotsretval

\pgfplotstablegetelem{1}{y1}\of{\datatable}
\let\ycoord\pgfplotsretval

\addplot [color=mycolor4, line width=\lineWidth, only marks, mark=o, mark options={solid, mycolor4}, forget plot]
  coordinates {(\xcoord,\ycoord)};


\addplot [color=mycolor2, line width=\lineWidth, forget plot]
table [x index=0, y index=1, col sep=comma]{img/DataCSV/Fig2b_tol1.000000e-01.csv};

\addplot [color=mycolor2, densely dashdotted, line width=\lineWidth]
table [x index=0, y index=2, col sep=comma]{img/DataCSV/Fig2b_tol1.000000e-01.csv};


\pgfplotstableread[col sep=comma]{img/DataCSV/Fig2b_QP.csv}\datatable

\pgfplotstablegetelem{0}{x1}\of{\datatable}
\let\xcoord\pgfplotsretval

\pgfplotstablegetelem{0}{y1}\of{\datatable}
\let\ycoord\pgfplotsretval

\addplot [color=mycolor2, line width=\lineWidth, only marks, mark=o, mark options={solid, mycolor2}, forget plot]
  coordinates {(\xcoord,\ycoord)};

\end{axis}
\end{tikzpicture}%
\label{subfig:2b}}
    \subfigure[Integration profile for $T=10$, $\inp(t)=t^2$ and for $T=100$, $\inp(t)=10\ee^{-t}+1$]{
%
\tikzexternaldisable
\begin{tikzpicture}

\begin{axis}[%
width=0.64*\imageWidth,
	height=\imageHeight,
	scale only axis,
	scaled ticks=false,
	grid=both,
	grid style={line width=.1pt, draw=gray!10},
	major grid style={line width=.2pt,draw=gray!50},
	axis lines*=left,
	axis line style={line width=\lineWidth},
xmin=-3,
xmax=0.5,
xlabel style={font=\color{white!15!black}},
xlabel={$\Re(\lapVar)$},
ymin=-1.5,
ymax=1.5,
ylabel style={font=\color{white!15!black}},
ylabel={$ \imagunit\Im(\lapVar)$},
	axis background/.style={fill=white},
	legend style={%
		legend cell align=left, 
		align=left, 
		font=\tiny,
		draw=white!15!black,
		at={(1.00,1.00)},
		anchor=north east,},
]
\addplot [color=mycolor1, dashed, line width=\lineWidth, mark=none, mark options={solid, mycolor1}]
table [x index=0, y index=1, col sep=comma]{img/DataCSV/Fig2c_10.csv};

\addlegendentry{$\Gamma$ for $T=10$}

\addplot [color=mycolor2, line width=\lineWidth, mark options={solid, mycolor2}]
table [x index=0, y index=1, col sep=comma]{img/DataCSV/Fig2c_100.csv};

\addlegendentry{$\Gamma$ for $T=100$}

\end{axis}
\end{tikzpicture}%
\label{subfig:2c}}
 }
	\caption{Instationary Stokes problem. Decay of the quadrature error for different values of the target accuracy $\tol$ (left and center). Truncated integration profile (right).}
	\label{fig1:Stokes}%
\end{figure}%
and we set $\stateDim=1159$. \Cref{subfig:2a} and \Cref{subfig:2b} report the decay of the quadrature error for two different final times $T$ and input functions $\inp(t)$. The plots show that an approximation of the solution is always achieved with the prescribed precision $\tol$ and that the estimated number of quadrature points required, highlighted by the circles in the plots, is reliable and not too distant from the optimal one. In \Cref{subfig:2c}, we show two of the computed integration profiles related to \Cref{subfig:2a} and \Cref{subfig:2b}, after truncation. It is evident how the final time $T$ affects the shape of the profile.

\subsection{Numerics for the structured \texorpdfstring{$\eps$}{TEXT}-distance from singularity for parametric matrices} \label{subsec:num:ex:stab:rad}
We illustrate the performance of \Cref{alg:strcatured:eps:distance:singularity} on three benchmark problems: a semidiscrete Stokes problem depending on a single parameter, a two-parameter system derived from the Black–Scholes model, and a finite-difference discretization of a two-parameter convection–diffusion \PDE.

\subsubsection{The parametric Stokes problem} \label{sec:Stokes:example}

Consider the semi-discretized Stokes equations of \Cref{subsec:num:Stokes}, i.e.,
\begin{align}\label{eq42:bis}
	\left\{\quad
	\begin{aligned}
		\dot{\fv}(t) &= \mu\fA_{11}\fv(t) + \fA_{12}\frho(t) + \fB_1\inp(t),\\
		\zeroVec &= \fA^\T_{12}\fv(t) + \fB_2\inp(t)
	\end{aligned}\right.
\end{align}
where $\mu\in\R$ is the parameter that regulates the diffusion strength. With  $\mu=1$ we recover the test problem \eqref{eq42}. For this case, the matrix $\fA$ in \eqref{eqn:ref:mat:ST} depends on the
parameter $\mu$ and has the following structure
\begin{equation*}
\fA(\mu) 
\;=\; \mu_1 \hat{\fA}_1 + \hat{\fA}_2+\hat{\fA}_2^\T,
\end{equation*} 
where
\begin{equation}\label{eq:A1A2}
\hat{\fA}_1 \;\vcentcolon=\;\begin{bmatrix}
		\fA_{11} & \zeroMat \\
		\zeroMat & \zeroMat
	\end{bmatrix},\quad\hat{\fA}_2 \;\vcentcolon=\;\begin{bmatrix}
		\zeroMat& \fA_{12}  \\
		\zeroMat & \zeroMat
	\end{bmatrix},
\end{equation} 
while $\fE$ is as in \eqref{eqn:ref:mat:ST}. Thus, the assumption \eqref{eqn:CS:par:mat} is satisfied by the problem. Motivated by \Cref{sec:str-uns}, for the parameter value $\mu_0=1$, we consider the integration profile $\Gamma_{\mu_0}$ used in \Cref{subsec:num:Stokes} for $\tol=10^{-7}$, $T=10$, and $\stateDim=1159$. Our objective is to determine the parametric set $\hat{\prmtrSet}$ in which, for $\varepsilon=10^{-3}$, we have

\[
\|(\lapVar_j\fE-\fA(\mu))^{-1}\|\le\frac{1}{\varepsilon},\quad \text{for all } \mu\in\hat{\prmtrSet},\quad \text{and } j=1,\ldots,N-1;
\] 
being $\lapVar_j$ a quadrature point on $\Gamma_{\mu_0}$. We run \Cref{alg1} for each quadrature point $z_j$, and thus get
\begin{equation*}
\fDelta_j = \delta_{\varepsilon} \fL^{\cS_{\fA_1}}, \qquad  \fL^{\cS_{\fA_1}} = c_1\hat{\fA}_1,\qquad c_1\vcentcolon=\|\hat{\fA}_1\|_{\Frob}^{-1};
\end{equation*}
for which we expect 
\begin{equation}\label{eqn:cond:set}
    \|(z\fE-\fA(\mu))^{-1}\|\;\le\frac{1}{\varepsilon},\quad\text{for all}\quad \mu\in\hat \prmtrSet\;\vcentcolon=\;\{\mu\;|\;\mu\le \mu_0+c_1\delta\;=\vcentcolon\;\hat \mu\}.
\end{equation}

\begin{figure}[t]
	\centering{
	\subfigure[$\lapVar=0.42+0.05\imagunit$ and $\hat{\mu}=4.55$]{
%
\tikzexternaldisable
\begin{tikzpicture}

\begin{axis}[%
width=0.64*\imageWidth,
	height=\imageHeight,
	scale only axis,
	scaled ticks=false,
	grid=both,
	grid style={line width=.1pt, draw=gray!10},
	major grid style={line width=.2pt,draw=gray!50},
	axis lines*=left,
	axis line style={line width=\lineWidth},
xmode=log,
xmin=0.0001,
xmax=100,
xlabel style={font=\color{white!15!black}},
xlabel={$\mu$},
ymode=log,
ymin=1,
ymax=1e5,
yminorticks=true,
ylabel style={font=\color{white!15!black}},
axis background/.style={fill=white},
	legend style={%
		legend cell align=left, 
		align=left, 
		font=\tiny,
		draw=white!15!black,
		at={(0.95,1.00)},
		anchor=north east,},
]
\addplot [color=mycolor1, line width=\lineWidth]
  table [x index=0, y index=1, col sep=comma]{img/DataCSV/Fig3.csv};
  
\addlegendentry{$\|(\lapVar\mathbf{E}-\mathbf{A}(\mu))^{-1}\|$}

\addplot [color=mycolor2, dashed, line width=\lineWidth]
table [x index=0, y index=4, col sep=comma]{img/DataCSV/Fig3.csv};
\addlegendentry{$\varepsilon^{-1}$}

\addplot [color=black, dashdotted, line width=\lineWidth]
  table[row sep=crcr]{%
4.5498	1\\
4.5498	1e5\\
};
\addlegendentry{$\hat{\mu}$ from \Cref{alg1}}

\end{axis}
\end{tikzpicture}%
\label{subfig:3a}} 
        \hfil
	\subfigure[$\lapVar=-0.13 + 0.94\imagunit$ and $\hat{\mu}=4.56$]{
%
\tikzexternaldisable
\begin{tikzpicture}

\begin{axis}[%
width=0.64*\imageWidth,
	height=\imageHeight,
	scale only axis,
	scaled ticks=false,
	grid=both,
	grid style={line width=.1pt, draw=gray!10},
	major grid style={line width=.2pt,draw=gray!50},
	axis lines*=left,
	axis line style={line width=\lineWidth},
xmode=log,
xmin=0.0001,
xmax=100,
xlabel style={font=\color{white!15!black}},
xlabel={$\mu$},
ymode=log,
ymin=1,
ymax=1e5,
yminorticks=true,
ylabel style={font=\color{white!15!black}},
axis background/.style={fill=white},
	legend style={%
		legend cell align=left, 
		align=left, 
		font=\tiny,
		draw=white!15!black,
		at={(1.00,1.00)},
		anchor=north east,},
]
\addplot [color=mycolor1, line width=\lineWidth]
  table [x index=0, y index=1, col sep=comma]{img/DataCSV/Fig3.csv};

\addplot [color=mycolor2, dashed, line width=\lineWidth]
table [x index=0, y index=4, col sep=comma]{img/DataCSV/Fig3.csv};

\addplot [color=black, dashdotted, line width=\lineWidth]
  table[row sep=crcr]{%
4.5498	1\\
4.5498	1e5\\
};

\end{axis}
\end{tikzpicture}%
\label{subfig:3b}}
        \hfil
    \subfigure[$\lapVar=-1.64 + 1.71\imagunit$ and $\hat{\mu}=4.58$]{
%
\tikzexternaldisable
\begin{tikzpicture}

\begin{axis}[%
width=0.64*\imageWidth,
	height=\imageHeight,
	scale only axis,
	scaled ticks=false,
	grid=both,
	grid style={line width=.1pt, draw=gray!10},
	major grid style={line width=.2pt,draw=gray!50},
	axis lines*=left,
	axis line style={line width=\lineWidth},
xmode=log,
xmin=0.0001,
xmax=100,
xlabel style={font=\color{white!15!black}},
xlabel={$\mu$},
ymode=log,
ymin=1,
ymax=1e5,
yminorticks=true,
ylabel style={font=\color{white!15!black}},
axis background/.style={fill=white},
	legend style={%
		legend cell align=left, 
		align=left, 
		font=\tiny,
		draw=white!15!black,
		at={(1.00,1.00)},
		anchor=north east,},
]
\addplot [color=mycolor1, line width=\lineWidth]
  table [x index=0, y index=1, col sep=comma]{img/DataCSV/Fig3.csv};

\addplot [color=mycolor2, dashed, line width=\lineWidth]
table [x index=0, y index=4, col sep=comma]{img/DataCSV/Fig3.csv};

\addplot [color=black, dashdotted, line width=\lineWidth]
  table[row sep=crcr]{%
4.5498	1\\
4.5498	1e5\\
};

\end{axis}
\end{tikzpicture}%
\label{subfig:3c}}
 }
	\caption{Instationary Stokes problem: structured stability radius $\delta=c_1^{-1}(\hat{\mu}-\mu_0)$ computed towards \Cref{alg1}, for $\mu_0=1$ and $c_1=\|\hat{\fA}_1\|_{\Frob}^{-1}$, and fixed unstructured stability radius $\varepsilon=10^{-3}$.}
	\label{fig3:Stokes}%
\end{figure}%

To illustrate the corresponding numerical results, we select the extreme quadrature points along with one central point, and for each of these we show in \Cref{fig3:Stokes} the value of $\hat \mu$ computed by \Cref{alg1}, together with the generalized resolvent norm over a broad range of parameters. The fact that, in the plots, the values of $\hat \mu$ intersect the level $\varepsilon$ precisely at points lying on the $\|(z\fE-\fA(\mu))^{-1}\|$ curve demonstrates that \Cref{alg1} is capable of determining the exact global minima for this problem.

\begin{remark}
We observe that the value $\hat\mu$ does not change significantly with respect to the chosen $z$, indicating a certain robustness of the resolvent norm for this problem at the quadrature points. This could be exploited for computations in the following way: when running \Cref{alg1} for $z_j$, one could initiate the structured and unstructured perturbation $\fL^{\cS}$ and $\fL$ with the one given by \Cref{alg1} for $z_{j-1}$. 
\end{remark}

\begin{figure}[t]
	\centering{
	\subfigure[Values of $\hat \mu$ for varying $\varepsilon$ with $\lapVar=0.42+0.05\imagunit$]{
		\tikzexternaldisable
\begin{tikzpicture}

\begin{axis}[%
width=0.64*\imageWidth,
	height=\imageHeight,
	scale only axis,
	scaled ticks=false,
	grid=both,
	grid style={line width=.1pt, draw=gray!10},
	major grid style={line width=.2pt,draw=gray!50},
	axis lines*=left,
	axis line style={line width=\lineWidth},
xmode=log,
xmin=1,
xmax=2511,
xminorticks=true,
xlabel style={font=\color{white!15!black}},
xlabel={$\mu$},
ymode=log,
ymin=8,
ymax=5.1e5,
yminorticks=true,
axis background/.style={fill=white},
axis background/.style={fill=white},
	legend style={%
		legend cell align=left, 
		align=left, 
		font=\tiny,
		draw=white!15!black,
		at={(1.00,0.27)},
		anchor=north east,},
]
\addplot [color=mycolor1]
  [color=mycolor1, line width=\lineWidth]
  table [x index=0, y index=1, col sep=comma]{img/DataCSV/Fig4a.csv};
\addlegendentry{$\|(\lapVar\mathbf{E}-\mathbf{A}(\mu))^{-1}\|$}

\addplot [color=mycolor2, dashed, line width=\lineWidth]
  table[row sep=crcr]{%
1	250\\
2511.88643150958	250\\
};
\addlegendentry{$\varepsilon^{-1}$}

\addplot [color=black, dashdotted, line width=\lineWidth]
  table[row sep=crcr]{%
1.12789238115606	100\\
1.12789238115606	1000000\\
};
\addlegendentry{$\hat{\mu}$ from \Cref{alg1}}

\addplot [color=mycolor2, dashed, line width=\lineWidth, forget plot]
  table[row sep=crcr]{%
1	500\\
2511.88643150958	500\\
};
\addplot [color=black, dashdotted, line width=\lineWidth, forget plot]
  table[row sep=crcr]{%
2.24821532302093	100\\
2.24821532302093	1000000\\
};
\addplot [color=mycolor2, dashed, line width=\lineWidth, forget plot]
  table[row sep=crcr]{%
1	1000\\
2511.88643150958	1000\\
};
\addplot [color=black, dashdotted, line width=\lineWidth, forget plot]
  table[row sep=crcr]{%
4.54977569579335	100\\
4.54977569579335	1000000\\
};
\addplot [color=mycolor2, dashed, line width=\lineWidth, forget plot]
  table[row sep=crcr]{%
1	2000\\
2511.88643150958	2000\\
};
\addplot [color=black, dashdotted, line width=\lineWidth, forget plot]
  table[row sep=crcr]{%
9.11072546487305	100\\
9.11072546487305	1000000\\
};
\addplot [color=mycolor2, dashed, line width=\lineWidth, forget plot]
  table[row sep=crcr]{%
1	10000\\
2511.88643150958	10000\\
};
\addplot [color=black, dashdotted, line width=\lineWidth, forget plot]
  table[row sep=crcr]{%
45.4848150433512	100\\
45.4848150433512	1000000\\
};
\addplot [color=mycolor2, dashed, line width=\lineWidth, forget plot]
  table[row sep=crcr]{%
1	20000\\
2511.88643150958	20000\\
};
\addplot [color=black, dashdotted, line width=\lineWidth, forget plot]
  table[row sep=crcr]{%
90.9732304351844	100\\
90.9732304351844	1000000\\
};
\addplot [color=mycolor2, dashed, line width=\lineWidth, forget plot]
  table[row sep=crcr]{%
1	100000\\
2511.88643150958	100000\\
};
\addplot [color=black, dashdotted, line width=\lineWidth, forget plot]
  table[row sep=crcr]{%
456.375297776009	100\\
456.375297776009	1000000\\
};
\addplot [color=mycolor2, dashed, line width=\lineWidth, forget plot]
  table[row sep=crcr]{%
1	200000\\
2511.88643150958	200000\\
};
\addplot [color=black, dashdotted, line width=\lineWidth, forget plot]
  table[row sep=crcr]{%
912.763247586598	100\\
912.763247586598	1000000\\
};
\end{axis}
\end{tikzpicture}%
\label{subfig:4a}}
        \hfil
	\subfigure[Convergence of $\phi(\delta_k)$ (see \eqref{zero-delta}) towards different $\varepsilon$ with $\lapVar=0.42+0.05\imagunit$]{
		\tikzexternaldisable
\begin{tikzpicture}

\begin{axis}[%
width=0.64*\imageWidth,
	height=\imageHeight,
	scale only axis,
	scaled ticks=false,
	grid=both,
	grid style={line width=.1pt, draw=gray!10},
	major grid style={line width=.2pt,draw=gray!50},
	axis lines*=left,
	axis line style={line width=\lineWidth},
xmin=1,
xmax=21,
xminorticks=true,
xlabel style={font=\color{white!15!black}},
xlabel={$k$},
ymode=log,
ymin=7e-6,
ymax=1e-2,
yminorticks=true,
axis background/.style={fill=white},
axis background/.style={fill=white},
	legend style={%
		legend cell align=left, 
		align=left, 
		font=\tiny,
		draw=white!15!black,
		at={(0.52,0.25)},
		anchor=north east,},
]
\addplot [color=mycolor1,mark=o,mark options={solid, mycolor1}]
  [color=mycolor1, line width=\lineWidth]
  table [x index=0, y index=1, col sep=comma]{img/DataCSV/Fig4b/1.csv};
\addlegendentry{$\phi(\delta_k)$}

\addplot [color=mycolor2, dashed, line width=\lineWidth]
  table[row sep=crcr]{%
1	4e-3\\
40	4e-3\\
};
\addlegendentry{$\varepsilon$}

\addplot [color=mycolor1,mark=square,mark options={solid,mycolor1}]
  [color=mycolor1, line width=\lineWidth]
  table [x index=0, y index=1, col sep=comma]{img/DataCSV/Fig4b/3.csv};

\addplot [color=mycolor2, dashed, line width=\lineWidth]
  table[row sep=crcr]{%
1	1e-3\\
40	1e-3\\
};
\addplot [color=mycolor1,mark=+,mark options={solid, mycolor1}]
  [color=mycolor1, line width=\lineWidth]
  table [x index=0, y index=1, col sep=comma]{img/DataCSV/Fig4b/5.csv};

\addplot [color=mycolor2, dashed, line width=\lineWidth]
  table[row sep=crcr]{%
1	1e-4\\
40	1e-4\\
};
\addplot [color=mycolor1,mark=triangle,mark options={solid, mycolor1}]
  [color=mycolor1, line width=\lineWidth]
  table [x index=0, y index=1, col sep=comma]{img/DataCSV/Fig4b/7.csv};

\addplot [color=mycolor2, dashed, line width=\lineWidth]
  table[row sep=crcr]{%
1	1e-5\\
40	1e-5\\
};
\end{axis}
\end{tikzpicture}%
\label{subfig:4b}}
        \hfil
    \subfigure[Computational time for \Cref{alg1} with respect to the problem dimension $\stateDim$]{
		\tikzexternaldisable
\begin{tikzpicture}

\begin{axis}[%
width=0.64*\imageWidth,
	height=\imageHeight,
	scale only axis,
	scaled ticks=false,
	grid=both,
	grid style={line width=.1pt, draw=gray!10},
	major grid style={line width=.2pt,draw=gray!50},
	axis lines*=left,
	axis line style={line width=\lineWidth},
xmode=log,
xmin=6e1,
xmax=1e4,
xminorticks=true,
xlabel style={font=\color{white!15!black}},
xlabel={$\stateDim$},
ymode=log,
ymin=6e-1,
ymax=2e3,
yminorticks=true,
axis background/.style={fill=white},
axis background/.style={fill=white},
	legend style={%
		legend cell align=left, 
		align=left, 
		font=\tiny,
		draw=white!15!black,
		at={(1.0,0.20)},
		anchor=north east,},
]
\addplot [color=mycolor1,mark=o,mark options={solid, mycolor1}, line width=\lineWidth]
  table [x index=0, y index=1, col sep=comma]{img/DataCSV/Fig4c.csv};
\addlegendentry{Alg. \ref{alg1} comp. time}

\addplot [color=mycolor2, dashed, line width=\lineWidth]
  table [x index=2, y index=3, col sep=comma]{img/DataCSV/Fig4c.csv};
\addlegendentry{$\calO(\stateDim)$}

\end{axis}
\end{tikzpicture}%
\label{subfig:4c}}
 }
	\caption{Instationary Stokes problem: computed $\hat\mu$ via \Cref{alg1} for different $\varepsilon$ (left), convergence of $\phi(\delta_k)$ towards the outer iterations $k$ of \Cref{alg1} for different $\varepsilon$ (middle), and computational behavior with respect to $\stateDim$ of \Cref{alg1} (right).}
	\label{fig4:Stokes}%
\end{figure}%

Next, we fix $\lapVar=0.42+0.05\imagunit$ and show the computed $\hat \mu$ for different values of $\varepsilon$ in \Cref{subfig:4a}. \Cref{alg1} succeeds in determining the correct $\hat \mu$ under which \eqref{eqn:cond:set} is satisfied for different $\varepsilon$. The convergence behavior is shown in \Cref{subfig:4b}, where we show the decay of
\[
\phi(\delta_k)=\sigma_{\min}\bigl(\lapVar\fE-\fA(\mu_0)+ \delta_k \fL^\cS( \delta_k)\bigr)
\]
as a function of the outer iteration index $k$. We observe an exponential decrease until $\phi(\delta)$ reaches the target threshold $\varepsilon$. Finally, \Cref{subfig:4c} illustrates how the computational cost of \Cref{alg1} scales as a function of $\stateDim$. As discussed in \Cref{sec3:comp:aspects}, for sparse problems with sparsely structured perturbations, the anticipated linear dependence on $\stateDim$ is indeed confirmed.

\subsubsection{The parametric Black-Scholes problem} \label{sec:BS:prob}
 Next, we consider an \ODE system derived from the space discretization of the Black-Scholes operator \cite{BlaS73}, using the scheme proposed in \cite{HouW09}. The parameters $[\mu_1, \mu_2]=\vcentcolon\prmtr$ represent volatility and interest rate, respectively, and the matrices are
\begin{equation}\label{eqn:BS:aff}
	\fA(\prmtr) \;=\; \mu_1 \fA_1 + \mu_2 \fA_2, \qquad \fE\;=\;\fI_{\stateDim};
\end{equation}
where $\fA_1, \fA_2\in\R^{\stateDim\times\stateDim}$, $\stateDim = 5 \cdot 10^3$, are sparse. Following \Cref{sec:problem}, we work with the sets of structured matrices $\cS_{\fA_1}$ and $\cS_{\fA_2}$. Our goal is to determine a neighborhood of
a given $\prmtr_0$ such that, for a given $\varepsilon$, the norm $\|\lapVar\Id_{\stateDim}-\fA(\prmtr)\|$ is smaller than or equal to $\varepsilon^{-1}$ for all $\prmtr$ in this neighborhood when evaluated over a certain set of $\lapVar\in\C$, i.e., the quadrature points used for a \CIM.

Given $\varepsilon>0$, \Cref{alg1} returns $\delta_{\varepsilon}$, $\fL$, and $\tilde \fL_j\in\cS_{\fA_j}$ for $j=1,2$; from here, let us define 
\begin{equation}\label{eqn:local:set}
\hat \prmtrSet_j\;\vcentcolon=\;\left\{\mu_j\;|\;\mu_j\;\le\; \mu_{0,j}+\frac{\delta_{\varepsilon}}{\langle \tilde\fL_{j},\fA_j\rangle}\right\},
\end{equation}
then we want to verify that 
\begin{equation}\label{eqn:cond:set:2}
    \|(z\fE-\fA(\prmtr))^{-1}\|\;\le\frac{1}{\varepsilon},\quad\text{for all}\quad \prmtr\in\hat \prmtrSet\;\vcentcolon=\;\prod_{j=1}^{2}\hat \prmtrSet_j.
\end{equation}
Note that the set \eqref{eqn:cond:set:2} is unbounded, whereas \Cref{alg1} is designed for perturbation frameworks and, therefore, is based on local optimization. Consequently, in general, we should expect only \eqref{eqn:cond:set:2} to hold in a neighborhood of $\prmtr_0$, where part of the boundary is determined by the inequality in \eqref{eqn:local:set}, rather than over the entire unbounded domain $\hat \prmtrSet$. To determine the remaining parts of the boundaries, one must execute \Cref{alg1} for various choices of $\prmtr_0$ and then merge the resulting inequalities.

We set $\prmtr_0=(10^{-4},1)$ and construct the profile $\Gamma_{\prmtr_0}$ for $T=10$ and $\tol=10^{-6}$. \Cref{fig5} displays the results of applying \Cref{alg1} on two different values of $\varepsilon$.
\begin{figure}[t]
	\centering{
		\subfigure[$\varepsilon=1.2$ and $\lapVar=0.99+0.05i\imagunit$]{
			\input{img/Fig5a.tex}\label{subfig:5a}}
		\subfigure[$\varepsilon=0.1$ and $\lapVar=-1.09 +16.54\imagunit$]{
			\input{img/Fig5b.tex}\label{subfig:5b}}
	}
	\caption{Black-Scholes test problem: $\log_{10}\left(\|(\lapVar\fI_{\stateDim}-\fA(\prmtr))^{-1}\|\right)$ displayed over a $\log_{10}$ scale domain in both the $\mu_1$, $\mu_2$ axis. The black curves denote the boundaries determined by the inequalities in \eqref{eqn:local:set}, and the green asterisk marks the point $\prmtr_0=(10^{-4},10^0)$. Finally, red crosses are the parameter instances for which the desired resolvent bound is not satisfied.}\label{fig5}
\end{figure}%
In \Cref{subfig:5a}, we choose $\varepsilon=1.2$, and the boundary of the set $\hat \prmtrSet$, expressed by the black lines, correctly encloses the points that satisfy $\|(\lapVar\fI_{\stateDim}-\fA(\prmtr))^{-1}\|_2\le\varepsilon^{-1}$. Immediately beyond this boundary, we find the points for which $\|(\lapVar\fI_{\stateDim}-\fA(\prmtr))^{-1}\|_2>\varepsilon^{-1}$. In \Cref{subfig:5b}, we take $\varepsilon=10^{-1}$ and examine the last quadrature point on the contour $\Gamma_{\prmtr_0}$. Once more, we can observe that \Cref{alg1} accurately determines the boundary values at which the resolvent becomes excessively large.

We conclude with the observation that the parameter set satisfying $\|(\lapVar\fI_{\stateDim}-\fA(\prmtr))^{-1}\|_2\le10$ is larger than the set defined by $\|(\lapVar\fI_{\stateDim}-\fA(\prmtr))^{-1}\|_2\le1.2^{-1}$, however, from one quadrature point to the next, the bounds of $\hat \prmtr$ shrink markedly. In addition, the range of values assumed by the resolvent in \Cref{subfig:5b} is wider than in \Cref{subfig:5a}.This behavior can be attributed to the presence of one or more eigenvalues that are close to $\lapVar=-1.07 +16.45\imagunit$ for those $\prmtr$ that make $\|(\lapVar\fI_{\stateDim}-\fA(\prmtr))^{-1}\|_2$ large. 
\subsubsection{The parametric convection-diffusion problem} \label{sec:con:dif:prob}
Consider the linear convection-diffusion equation
\begin{equation}\label{CD}
	\frac{\partial u}{\partial t}\;=\; \mu_2 u_{hh}+ \mu_1 u_h,\qquad h \in [0,L],\;\;0<t\le T,
\end{equation}
with homogeneous Dirichlet boundary conditions.
The unknown function $u(h,t)$ depends on the diffusivity $\mu_2>0$ and the velocity $\mu_1>0$. 
Using a standard second-order finite-difference discretization for the diffusion term and a first-order upwind finite-difference discretization for the convection term, we construct the matrix $\fA(\prmtr)$, with $\prmtr\vcentcolon=[\mu_1,\mu_2]$. As a result, this leads to a linear system of \ODE with $\fE=\fI_{\stateDim}$. We denote by ${\rm trid}(a,b,c)$ a tridiagonal Toeplitz matrix whose lower diagonal entries are all equal to $a$, whose main diagonal entries are all equal to $b$, and whose upper diagonal entries are all equal to $c$. Next, we define the matrices that discretize
$u_{hh}$ and $u_{h}$ on a uniform grid with equally spaced grid points, each separated by a distance $\Delta h$, as
\[
\fA_1 \;=\; \frac{1}{\Delta h} {\rm trid} \left( -1, 1, 0 \right), \qquad \fA_2 \;=\; \frac{1}{\Delta h^2} {\rm trid} \left( 1, -2, 1 \right).
\]
The matrix $\fA(\prmtr)$ is then given by
\begin{equation*}
\fA(\prmtr) = \mu_1 \fA_1 + \mu_2 \fA_2,
\end{equation*} 
and we can consider the set of structured matrices $\cS_{\fA}$ for this problem as that given by the cartesian product of $\cS_{\fA_1}$ and $\cS_{\fA_2}$, following \Cref{sec:problem}. The same passages of \Cref{sec:BS:prob} can be repeated here to determine the set $\hat \prmtrSet$; see \eqref{eqn:local:set} and \eqref{eqn:cond:set:2}.

We set $\stateDim=10^4$, $T = 10$, and $\tol = 10^{-6}$ and construct the profile $\Gamma_{\prmtr_0}$ for $\prmtr_0 = (10^{-2}, 1)$. After executing \Cref{alg1} with $\varepsilon = 1$, we observe that the variation in the first allowable parameter is negligible, and therefore decided to display the results only for $\prmtr_2$. Thus, we determine the set $\hat \prmtrSet$ by varying $\mu_2$ while keeping $\mu_1$ fixed. \Cref{subfig:6a} and \ref{subfig:6b} display the results at two quadrature points, respectively. \Cref{alg1} successfully identifies the extreme value $\hat \mu_2$, and applying the inequality \eqref{eqn:local:set} for $j=2$, we can determine $\tilde \mu_2$ such that for any $\mu_2$ in the interval $[\hat \mu_2, \tilde \mu_2]$ (indicated by the two vertical black lines on the graphs), the inequality
\[
\|(\lapVar\fI_{\stateDim} - \fA(10^{-2}, \mu_2))^{-1}\|\; \le \;\frac{1}{\varepsilon},
\]
holds. 
\begin{figure}[t]
	\centering{
		\subfigure[$\prmtr_0=(10^{-2},1)$, $\varepsilon=1$ and $\lapVar=0.77 + 0.05\imagunit$]{
%
\tikzexternaldisable
\begin{tikzpicture}

\begin{axis}[%
width=\imageWidth,
height=\imageHeight,
	scale only axis,
	scaled ticks=false,
	grid=both,
	grid style={line width=.1pt, draw=gray!10},
	major grid style={line width=.2pt,draw=gray!50},
	axis lines*=left,
	axis line style={line width=\lineWidth},
xmode=log,
xmin=0.0001,
xmax=10,
xlabel style={font=\color{white!15!black}},
xlabel={$\mu$},
ymode=log,
ymin=0.0616591491087206,
ymax=1.5,
yminorticks=true,
ylabel style={font=\color{white!15!black}},
axis background/.style={fill=white},
	legend style={%
		legend cell align=left, 
		align=left, 
		font=\tiny,
		draw=white!15!black,
		at={(0.60,0.50)},
		anchor=north east,},
]
\addplot [color=mycolor1, line width=\lineWidth]
  table [x index=0, y index=1, col sep=comma]{img/DataCSV/Fig6a.csv};
  
\addlegendentry{$\|(\lapVar\mathbf{E}-\mathbf{A}(\mu))^{-1}\|$}

\addplot [color=mycolor2, dashed, line width=\lineWidth]
table [x index=0, y index=3, col sep=comma]{img/DataCSV/Fig6a.csv};
\addlegendentry{$\varepsilon^{-1}$}

\addplot [color=black, dashdotted, line width=\lineWidth]
  table[row sep=crcr]{%
0.21698	0.0616591491087206\\
0.21698	1.5\\
};
\addlegendentry{$\hat{\mu}_2$ from Alg.~\ref{alg1}}

\addplot [color=black, dotted, line width=\lineWidth]
  table[row sep=crcr]{%
1.87	0.0616591491087206\\
1.87	1.5\\
};
\addlegendentry{$\tilde{\mu}_2$}

\end{axis}
\end{tikzpicture}%
\label{subfig:6a}}
		\subfigure[$\prmtr_0=(10^{-2},1)$, $\varepsilon=1$ and $\lapVar=0.14 + 1.62\imagunit$]{
%
\tikzexternaldisable
\begin{tikzpicture}

\begin{axis}[%
width=\imageWidth,
height=\imageHeight,
	scale only axis,
	scaled ticks=false,
	grid=both,
	grid style={line width=.1pt, draw=gray!10},
	major grid style={line width=.2pt,draw=gray!50},
	axis lines*=left,
	axis line style={line width=\lineWidth},
xmode=log,
xmin=0.0001,
xmax=10,
xlabel style={font=\color{white!15!black}},
xlabel={$\mu$},
ymode=log,
ymin=0.0616591491087206,
ymax=1.5,
yminorticks=true,
ylabel style={font=\color{white!15!black}},
axis background/.style={fill=white},
	legend style={%
		legend cell align=left, 
		align=left, 
		font=\tiny,
		draw=white!15!black,
		at={(0.62,0.40)},
		anchor=north east,},
]
\addplot [color=mycolor1, line width=\lineWidth]
  table [x index=0, y index=2, col sep=comma]{img/DataCSV/Fig6a.csv};
  

\addplot [color=mycolor2, dashed, line width=\lineWidth]
table [x index=0, y index=3, col sep=comma]{img/DataCSV/Fig6a.csv};

\addplot [color=black, dashdotted, line width=\lineWidth]
  table[row sep=crcr]{%
0.4622	0.0616591491087206\\
0.4622	1.5\\
};

\addplot [color=black, dotted, line width=\lineWidth]
  table[row sep=crcr]{%
1.573	0.0616591491087206\\
1.573	1.5\\
};

\end{axis}
\end{tikzpicture}%
\label{subfig:6b}}
	}
	\caption{Convection-diffusion test problem: the set $\hat{\prmtrSet}$ computed for $\varepsilon=1$, $\mu_1=10^{-2}$, and two different quadrature points.}\label{fig6}
\end{figure}%

\Cref{subfig:7a,subfig:7b} show the results obtained for the reference parameter $\prmtr_0=[10^{-4},1]$. In this case, we observed a non-negligible variation in $\prmtr_1$, and therefore we present the results in a two-dimensional plot. The results can be interpreted in a way analogous to those in \Cref{fig5}; once more, \Cref{alg1} successfully identifies the threshold value of the structured stability radius $\delta_{\varepsilon}$.

\begin{figure}[t]
	\centering{
		\subfigure[$\varepsilon=1$ and $\lapVar=0.81+0.05i\imagunit$]{
			\input{img/Fig7a.tex}\label{subfig:7a}}
			\hfill
		\subfigure[$\varepsilon=1$ and $\lapVar=-0.49 +2.77\imagunit$]{
			\input{img/Fig7b.tex}\label{subfig:7b}}
	}
	\caption{Convection-diffusion test problem: $\log_{10}\left(\|(\lapVar\fI_{\stateDim}-\fA(\prmtr))^{-1}\|\right)$ displayed over a $\log_{10}$ scale domain in both the $\mu_1$, $\mu_2$ axis. The black curves denote the boundaries determined by the inequalities in \eqref{eqn:local:set}, and the green asterisk marks the point $\prmtr_0=(10^{-4},10^0)$. Finally, red crosses are the parameter instances for which the desired resolvent bound is not satisfied.}\label{fig7}
\end{figure}%

\section{Conclusion} \label{sec:conc}
The contribution of this work is twofold. First, we introduce a novel time-integration scheme for \DAE systems based on \CIM. A central step in evaluating the efficiency of the proposed method is the analysis of the generalized resolvent $\|(\lapVar\fE-\fA)^{-1}\|$ in the case where $\fE$ is singular. Although Proposition~\ref{lemma1} shows that this quantity behaves like the maximum between the resolvent norm of a general nonnormal matrix and a polynomial of degree $\indDAE-1$, where $\indDAE$ denotes the index of the \DAE, in \Cref{lemma2} we establish that the \CIM framework remains suitable for \DAEs provided that the right-hand side is sufficiently differentiable. These regularity requirements are analogous to the smoothness assumptions commonly imposed in the classical existence theory for \DAE solutions. The theoretical findings are subsequently corroborated by numerical experiments presented in \Cref{sec:num:CIM:DAE}. Second, we introduced a combined structured–unstructured eigenvalue perturbation framework to derive bounds on the distance to singularity of the generalized resolvent in the case where the matrix $\fA$ is given as an affine parametric function. An extension of this analysis to the matrix $\fE$ is also outlined in the remarks. By determining the largest admissible structured perturbation in the Frobenius norm, we can characterize a parametric region in which the generalized resolvent remains uniformly bounded by a prescribed value $\varepsilon^{-1}$. This property is crucial for the application of \CIM within the projection-based \MOR for parametric problems \cite{GugM23}, since these methods strongly rely on identifying a unique integration profile that is valid for an entire set of continuous parameters.

Several avenues for further investigation emerge as natural continuations of this work. To address \cref{issue1}, one may employ a combined structured–unstructured eigenvalue perturbation analysis to derive limits on the location of eigenvalues throughout the parametric domain. This development would constitute the final step toward a rigorous, globally valid assignment of a unique integration profile across a wide range of parameter values. In addition, it seems promising to generalize the \CIMs-\MOR framework proposed in \cite{GugM23} to the parametric setting \DAE. This extension, fully independent of time-stepping integrators, has the potential to facilitate the construction of projection spaces while bypassing the stabilization procedures or decoupling strategies commonly required in state-of-the-art parametric methods \DAE. Finally, in the contest of large scale problems, solving several eigenvalue problems as in \Cref{alg1} requires be prohibitive. To avoid this, it is possible to employ subspace projection methods, see, for instance, \cite{ManMG26}, to effectively reduce the size of the matrices involved in \Cref{alg1} and thus speed up the entire optimization.

\section*{Acknowledgment}
MM acknowledges funding by the BMBF (grant no. 05M22VSA) and acknowledges support by the Stuttgart Center for Simulation Science. NG~acknowledges that his research was supported by funds from the Italian MUR (Ministero dell'Universit\`{a} e della Ricerca) within the PRIN 2022 Project ``Advanced numerical methods for time dependent parametric partial differential equations with applications'' and the 2022 PRIN-PNRR grant FIN4GEO. Nicola Guglielmi is affiliated to the Italian INdAM-GNCS (Gruppo Nazionale di Calcolo Scientifico).

\bibliographystyle{plain-doi}
\bibliography{journalAbbr,LibraryUMI}

\end{document}